\renewcommand\@makefntext[1]{\leftskip=2em\hskip-2em\@makefnmark#1}
\titleformat*{\section}{\bfseries}
\titleformat*{\subsection}{\mdseries}
\title{\large\bf Peak-over-Threshold Estimators for Spectral Tail Processes:\\ Random vs Deterministic Thresholds\thanks{ Holger Drees was partly supported by DFG grant DR 271/6-2 as part of the Research Unit 1735.}\footnote{\footnoterule \hspace{1.2ex} H. Drees $\cdot$ M. Kne\v{z}evi\'{c}} \footnote{Department of Mathematics, University of Hamburg, Bundesstra{\ss}e 55, 20146 Hamburg, Germany}\footnote{E-mail: holger.drees@uni-hamburg.de, miran.knezevic@uni-hamburg.de}
}
\renewcommand\footnotemark{}
\author{\normalsize  Holger Drees     \and 	 \normalsize Miran Kne\v{z}evi\'{c}}
\date{\normalsize \today}
\numberwithin{equation}{section}
 \newcommand{\1}{\mathds{1}}
\newcommand{\R}{\mathbb{R}}
\newcommand{\N}{\mathbb{N}}
\newcommand{\Z}{\mathbb{Z}}
\newcommand{\B}{\mathbb{B}}
\newcommand{\La}{\mathcal{L}}
\newcommand{\eps}{\varepsilon}
\newcommand{\F}{\mathcal{F}}
\newcommand{\var}{\textit{var}}
 \DeclareMathOperator{\cov}{cov}
 \newtheorem{Thm}{Theorem}{\bfseries}{\itshape}
 \newtheorem{Lem}[Thm]{Lemma}{\bfseries}{\itshape}
 \newtheorem{Prop}[Thm]{Proposition}{\bfseries}{\itshape}
 \numberwithin{Thm}{section}
 \newenvironment{proof}[1][]
{\noindent \textit{Proof{
			#1}}}{\hfill$\Box$\\}
 \newenvironment{proofofthm}[1][]
 {\noindent \textit{Proof of Theorem \ref{theo:asnormest}{
 			#1}}}{\hfill$\Box$\\}
 \newenvironment{proofoflemma}[1][]
 {\noindent \textit{Proof of Lemma \ref{lemma:OSconsist}{
			#1}}}{\hfill$\Box$\\}
 \newenvironment{proofofthmboot}[1][]
 {\noindent \textit{Proof of Theorem \ref{theo:bootstrap}{
 			#1}}}{\hfill$\Box$\\}
\begin{document}

\maketitle
\thispagestyle{empty}

\noindent
{\bfseries Abstract\ }
  The extreme value dependence of regularly varying stationary time series can be described by the spectral tail process. Drees, Segers and Warcho\l\ [Extremes 18(3): 369--402, 2015] proposed estimators of the marginal distributions of this process based on exceedances over high deterministic thresholds and analyzed their asymptotic behavior. In practice, however, versions of the estimators are applied which use exceedances over random thresholds like intermediate order statistics. We prove that these modified estimators have the same limit distributions. This finding is corroborated in a simulation study, but the version using order statistics performs a bit better for finite samples.\vspace{1ex}

\noindent
{\bfseries Keywords\ }
	Heavy tails $\cdot$ Regular variation $\cdot$ Spectral tail process $\cdot$ Stationary time series $\cdot$ Tail process $\cdot$ Threshold selection\vspace{1ex}
	
\noindent
{\bfseries Mathematics Subject Classification (2010)\ } 62G32 $\cdot$ 62M10 $\cdot$ 62G05
\vspace{2ex}

\section{Introduction}
\label{sec:intro}
\setcounter{section}{1}

By now the statistical analysis of the univariate tail behavior of stationary time series is well established.
However, in many situations understanding dependence between extreme observations is as important as the marginal extreme value behavior. For example, large losses to a financial investment pose a more serious risk if they tend to occur in clusters. Likewise, several consecutive days of high precipitation may result in a larger flooding than a single day of heavy rainfall even if the latter is more extreme.

Despite its practical importance, inference on the dependence structure between extreme observations is much less developed than marginal analysis. In the framework of regularly varying stationary time series, \cite{Basrak.2009} introduced (spectral) tail processes as a neat way to describe the dependence structure in extreme regions.

Throughout the paper, we suppose that $(X_t)_{t\in\Z}$ is a stationary real-valued time series such that all finite-dimensional marginal distributions are regularly varying. This is equivalent to the assumption that, for each $s,t\in\Z$, $s\leqslant t$, there exists a non-degenerate measure $\mu_{s,t}$ on $(\R^{t-s+1},\B^{t-s+1})$ such that
$$ \lim_{u\to\infty} \frac{P\{u^{-1}(X_s,\ldots,X_t)\in A\}}{P\{|X_0|>u\}}=\mu_{s,t}(A) $$
for all $\mu_{s,t}$-continuity sets $A$ that are bounded away from 0. \cite{Basrak.2009} proved that a stationary time series is regularly varying if and only if there exists a so-called tail process $(Y_t)_{t\in\Z}$ such that
$$ P\big( u^{-1}(X_s,\ldots,X_t)\in \cdot \mid |X_0|>u\big) \,\rightsquigarrow\, P\big\{(Y_s,\ldots, Y_t)\in\cdot\big\}
$$
as $u\to\infty$ for all $s\leqslant 0\leqslant t$, with $\rightsquigarrow$ denoting weak convergence. Then $P\{|Y_0|>x\}=x^{-\alpha}$ for all $x\geqslant 1$ and some $\alpha>0$, the index of regular variation. Moreover, the so-called spectral tail process $\Theta_t:=Y_t/|Y_0|$, $t\in\Z$,
is independent of $|Y_0|$. This process, which is also obtained as the limit of the conditional  self-normalized process
\begin{equation} \label{eq:spectralprocdef}
P\big( |X_0|^{-1}(X_s,\ldots,X_t)\in \cdot \mid |X_0|>u\big) \,\rightsquigarrow\, P\big\{(\Theta_s,\ldots, \Theta_t)\in\cdot\big\},
\end{equation}
captures the serial extreme value dependence of the time series $(X_t)_{t\in\Z}$.

Unlike $(X_t)_{t\in\Z}$, the spectral tail process is not stationary. However, it exhibits a peculiar structure which can be described by the so-called time change formula \citep[Theorem 3.1]{Basrak.2009}: for all $i,s,t\in\Z$ with $s\leqslant 0\leqslant t$ and for all measurable functions $f:\R^{t-s+1} \to \R$ satisfying $f(y_s,\ldots,y_t)=0$ whenever $y_0=0$, we have
\begin{equation}
\label{eq:tcf}
E\big[f(\Theta_{s-i},\ldots,\Theta_{t-i})\big]
=
E\Big[    f\Big(
\frac{\Theta_{s}}{|\Theta_{i}|},
\ldots,
\frac{\Theta_{t}}{|\Theta_{i}|}
\Big) \,
|\Theta_{i}|^\alpha \,
\1{\{ \Theta_i \ne 0 \}} \Big],
\end{equation}
provided the expectations exist.

If one wants to infer on the distribution of $(\Theta_t)_{t\in\Z}$ it is natural to interpret convergence \eqref{eq:spectralprocdef} as an approximation for a sufficiently high threshold $u=u_n$ and replace the unknown conditional probability by an empirical counterpart. For example, if one wants to estimate the cdf of $\Theta_t$ at $x\in\R$ for some lag $t\in\Z$, this approach leads to the so-called forward estimator
\begin{equation}\label{eq:forwarddef}
\hat{F}_{n,u_n}^{(f,\Theta_t)}(x):=\dfrac{\sum_{i=1}^n\1\{X_{i+t}/|X_i|\leqslant x,|X_i|>u_n\}}{\sum_{i=1}^n\1\{|X_i|>u_n\}}
\end{equation}
(assuming that $X_1,\ldots,X_{n+t}$ have been observed). \cite{Drees.2015} and \cite{Davis.2018} have shown  that in certain situations more efficient estimators can be constructed by using the time change formula. From \eqref{eq:tcf}, one may conclude
\begin{align}\label{eq:tcfappl}
P\{ \Theta_t \leqslant x \}
& =
\begin{cases}
1-E\big[|\Theta_{-t}|^{\alpha}
\1{\{\Theta_0/|\Theta_{-t}| > x\}}\big]
&
\text{if $x \geqslant 0$}, \\
E\big[|\Theta_{-t}|^{\alpha}\,
\1{\{\Theta_0/|\Theta_{-t}| \leqslant x\}}\big]
&
\text{if $x<0$}
\end{cases}  \nonumber \\
& = \begin{cases}
1-\lim_{u\to\infty} E\big[|X_{-t}/X_0|^{\alpha}
\1{\{X_0/|X_{-t}| > x\}} \mid |X_0|>u \big]
&
\text{if $x \geqslant 0$}, \\
\lim_{u\to\infty} E\big[|X_{-t}/X_0|^{\alpha}
\1{\{X_0/|X_{-t}| \leqslant x\}} \mid |X_0|>u \big]
&
\text{if $x<0$}
\end{cases}
\end{align}
\citep[Lemma 2.1]{Davis.2018}. Again, by interpreting the limit as an approximation and replacing the conditional expectations by empirical analogs, we obtain the backward estimator
\begin{equation}\label{eq:backwarddef}
\hat{F}_{n,u_n}^{(b,\Theta_t)}(x):=\begin{cases}
1-\dfrac{\sum_{i=1}^n|{X_{i-t}}/{X_i}|^{\hat\alpha_{n,u_n}}\1\{X_{i}/|X_{i-t}|> x,|X_i|>u_n\}}{\sum_{i=1}^n\1\{|X_i|>u_n\}} &\text{ if } x\geqslant0,\\[2.5ex]
\dfrac{\sum_{i=1}^n|{X_{i-t}}/{X_i}|^{\hat\alpha_{n,u_n}}\1\{X_{i}/|X_{i-t}|\leqslant x,|X_i|>u_n\}}{\sum_{i=1}^n\1\{|X_i|>u_n\}}& \text{ if } x<0.
\end{cases}
\end{equation}
Here $\hat\alpha_{n,u_n}$ is a suitable estimator of the index of regular variation based on the exceedances over the threshold $u_n$, e.g., the Hill-type estimator
\begin{equation} \label{eq:Hillexceeddef}
\hat\alpha_{n,u_n} := \frac{\sum_{i=1}^n \1{\{|X_i|>u_n\}}}{\sum_{i=1}^n \log(|X_i|/u_n) \1{\{|X_i|>u_n\}}}.
\end{equation}

Typically the backward estimator is more accurate than the forward estimator if $|x|$ is not too small. In particular, \cite{Drees.2015} have shown that its asymptotic variance is always smaller for $|x|\geqslant 1$ when $\alpha$ is known, the tail process is Markovian and the threshold $u_n$ is chosen as a quantile $F^\leftarrow(1-t_n)$ of the marginal cdf with $t_n\downarrow 0$ at a suitable rate. (Here and in the following $F$ denotes the cdf of $|X_0|$ and $F^\leftarrow$ its generalized inverse.) \cite{Drees.2015} and \cite{Davis.2018} also compared the performance of both estimators for finite sample sizes in a simulation study. However, while the asymptotic results have been proved when the estimators are based on exceedances over a {\em deterministic} threshold $u_n=F^\leftarrow(1-t_n)$, in the simulation study {\em empirical} quantiles, that is, order statistics, have been used. Similarly, \cite{Davis.2018} proved consistency of certain bootstrap versions of the forward and backward estimators when the thresholds $u_n$ are deterministic, but they used a version with random thresholds to construct confidence intervals in the simulations. This leaves a gap between the mathematical analysis on the one hand and the procedure commonly applied in practice on the other hand.

It is the main aim of the present paper to close this gap. While it is plausible that there is a close relationship between the performance of both versions of the aforementioned estimators (using a deterministic respectively a random threshold), it is {\em a priori} not clear whether they have the same limit distribution. In a somewhat comparable situation, \cite{Drees.2004} examined the asymptotic behavior of the maximum likelihood estimators of a scale parameter and the extreme value index in a generalized Pareto model fitted to the exceedances over a large order statistic, which had been previously studied by \cite{Smith.1987} for exceedances over a high deterministic threshold. There it turned out that using order statistics instead of true quantiles as thresholds does not influence the asymptotic behavior of the estimator of the extreme value index, but  the limit distribution of the scale estimator is different in both approaches.

In the present setting, we show in Section \ref{sec:asymp} that under suitable conditions the limit behavior of neither the forward nor the backward estimator  nor the corresponding bootstrap estimator is altered by using random  instead of deterministic thresholds. In Section \ref{sec:simus} we demonstrate in a small simulation study that while indeed the distribution of versions of the estimators using quantiles resp.\ order statistics behave similarly, the version based on exceedances over order statistics often performs slightly better for finite sample sizes. Appendix A contains tables with true values to be estimated in the simulation study. To deal with random thresholds, we must strengthen some of the conditions used by  \cite{Davis.2018}. In Appendix B, we verify these more restrictive conditions for solutions of stochastic recurrence equations. All proofs are deferred to  Appendix C. While the general approach using empirical process theory is the same as used by \cite{Davis.2018}, the proof of asymptotic equicontinuity of certain empirical processes is much more challenging in the present setting.

\section{Asymptotic results}
\label{sec:asymp}
\setcounter{section}{2}

Under suitable conditions, the joint asymptotic normality of the processes of forward and backward estimators \eqref{eq:forwarddef} and \eqref{eq:backwarddef} has been shown by \cite{Drees.2015} and \cite{Davis.2018}. However, in practice usually some data dependent threshold $\hat u_n$ is used instead of the deterministic sequence $u_n$. Here we prove that these processes converge to the same limits if the random threshold is a consistent estimator of the deterministic sequence in the sense that
\begin{equation} \label{eq:threshconsist}
S_n:=\frac{\hat u_n}{u_n} \to 1 \quad \text{in probability.}
\end{equation}
The most prominent examples are order statistics $\hat u_n=X_{n-k_n:n}$ where $u_n$ and $k_n$ are related via  $k_n=\lfloor n \bar F(u_n)\rfloor$ (with $\bar F$ denoting the survival function of $|X_1|$) and $u_n=F^\leftarrow(1-k_n/n)$.

For ease of presentation, in this section we assume that  non-negative random variables $X_{1-\tilde t},\ldots, X_{n+\tilde t}$ have been observed (i.e., $n+2\tilde t$ is the actual sample size), but our results easily carry over to real-valued observations. Here $\tilde t>0$ denotes the maximal lag we are interested in.
We define versions of the forward and backward estimators of the cdf $F^{(\Theta_t)}$ of $\Theta_t$ for $|t|\leqslant \tilde t$ based on the exceedances over $\hat u_n$ as follows:
\begin{align}
\hat{F}_{n,\hat u_n}^{(f,\Theta_t)}(x) & :=\dfrac{\sum_{i=1}^n\1\{X_{i+t}/|X_i|\leqslant x,|X_i|>\hat u_n\}}{\sum_{i=1}^n\1\{|X_i|>\hat u_n\}}, \label{eq:forranddef}\\
\hat{F}_{n,\hat u_n}^{(b,\Theta_t)}(x) & :=
1-\dfrac{\sum_{i=1}^n|{X_{i-t}}/{X_i}|^{\hat\alpha_{n,\hat u_n}}\1\{X_{i}/|X_{i-t}|> x,|X_i|>\hat u_n\}}{\sum_{i=1}^n\1\{|X_i|>\hat u_n\}}  \label{eq:backranddef}
\end{align}
where $\hat\alpha_{n,\hat u_n}$ is defined as in \eqref{eq:Hillexceeddef} with $\hat u_n$ instead of $u_n$.

For the asymptotic analysis, we follow the approach used by \cite{Davis.2018}. The forward and backward estimators can be represented in terms of certain empirical processes, so-called generalized tail array sums.  However, here we introduce an additional parameter $s$ (belonging to some neighborhood of 1) which is multiplied with the given deterministic threshold $u_n$; evaluating the processes at $s=S_n$ then leads to the estimators with random threshold.

The asymptotic behavior of such empirical processes has been studied by \cite{Drees.2010} for $\beta$-mixing time series. To be more concrete, for some $\eps>0$, let
\begin{equation} \label{eq:Xnidef}
X_{n,i} = u_n^{-1}(X_{i-\tilde t},\ldots, X_{i+\tilde t})\1{\{X_i>(1-\eps)u_n\}}, \quad 1\leqslant i\leqslant n.
\end{equation}
Then all estimators under consideration can be expressed in terms of sums of the type $\sum_{i=1}^n \psi(X_{n,i})$ for functions $\psi:[0,\infty)^{2\tilde{t}+1}\to[0,\infty)$ of the following types:
\begin{align}
\phi_{0,s}(z)&:=\log^+\Big(\frac{z_0}{s}\Big)=\log\Big(\frac{z_0}{s}\Big)\1\{z_0>s\}, \label{eq:phi0def}\\
\phi_{1,s}(z)&:=\1\{z_0>s\}, \label{eq:phi1def}\\
\phi_{2,x,s}^t(z)&:=\1\Big\{\frac{z_t}{z_0}>x,\ z_0>s\Big\}, \label{eq:phi2def}\\
\phi_{3,y,s}^t(z)&:=\Big(\frac{z_{-t}}{z_0}\Big)^{\alpha}\1\Big\{\frac{z_0}{z_{-t}}>y,\ z_{-t}>0,\ z_0>s\Big\}, \label{eq:phi3def}
\end{align}
with $z=(z_{-\tilde{t}},\dots,z_{\tilde{t}})\in[0,\infty)^{2\tilde{t}+1}$ and $\log^+ x := (\log x) \1{\{x>1\}}$. For example,\linebreak
$\hat{F}_{n,\hat u_n}^{(f,\Theta_t)}(x)=1-\sum_{i=1}^n \phi_{2,x,S_n}^t(X_{n,i})/\sum_{i=1}^n \phi_{1,S_n}^t(X_{n,i})$, provided $S_n>1-\eps$, which, according to \eqref{eq:threshconsist}, holds with probability tending to 1.

\cite{Davis.2018} established joint asymptotic normality of the suitably standardized generalized tail array sums for fixed $s=1$. The additional index $s\in[1-\eps,1+\eps]$ (for some small $\eps>0$)  can easily be incorporated in most parts of the asymptotic analysis of the generalized tail array sums. However, verifying the entropy condition needed to prove process convergence  becomes a challenging task, while it is trivial for the one parameter families of linearly ordered functions considered by \cite{Davis.2018}. To tackle this problem, we must strengthen condition (C)  of \cite{Davis.2018} and adapt some of the other conditions as follows. Let
$$  v_n:=P\{X_0>u_n\} $$
and
$$
\beta_{n,k} := \sup_{1\leqslant l\leqslant n-k-1}
E\Big[\sup_{B\in\mathcal{B}_{n,l+k+1}^n} \left\lvert P(B \mid \mathcal{B}_{n,1}^l)-P(B)\right\rvert\Big],
$$
with $\mathcal{B}_{n,i}^j$ denoting the $\sigma$-field generated by $(X_{n,l})_{i\leqslant l\leqslant j}$.
We assume that there exist sequences $l_n,r_n\to\infty$ and some $x_0\geqslant 0$ such that the following conditions hold:
\begin{description}
	\item[\bf(A($\mathbf{x_0}$))]
	The cdf of $\Theta_t$, $F^{(\Theta_t)}$, is continuous on $[x_0,\infty)$, for $|t|\in\{1,\ldots,\tilde{t}\}$.
	\item[\bf(B)]
	As $n \to \infty$, we have $l_n\to\infty$, $l_n=o(r_n)$, $r_n=o((n v_n)^{1/2})$, $r_nv_n\to 0$, and \\ $\beta_{n,l_n}n/r_n\to 0$.
\end{description}
Without condition (A($x_0$)) one cannot expect uniform convergence of the estimators.
Condition~(B) imposes restrictions on the rate at which $v_n$ tends to 0 and thus on the rate at which $u_n$ tends to $\infty$. Often, the $\beta$-mixing coefficients decay geometrically, i.e., $\beta_{n,k}=O(\eta^k)$ for some $\eta\in(0,1)$. Then one may choose $l_n=O(\log n)$, and Condition~(B) is fulfilled for a suitably chosen $r_n$ if $(\log n)^2/n=o(v_n)$ and $v_n =o(1/(\log n))$.
\begin{description}
	\item[\bf(C)]
	For all $0\leqslant j\leqslant k\leqslant r_n$, there exist
	\begin{align}
	\label{eq:snkdef}
	s_n(k)& \geqslant E\Big[\max\Big\{\log\Big(\frac{X_0}{(1-\eps)u_n}\Big),\1\{X_0>(1-\varepsilon)u_n\Big\} \nonumber\\
	& \hspace*{0.45cm} \times\max\Big\{\log\Big(\frac{X_k}{(1-\eps)u_n}\Big),\1\{X_k>(1-\eps)u_n\}\Big\} \,\Big|\, X_0>(1-\eps)u_n\Big]\\
	\tilde s_n(j,&k)  \geqslant P\big(X_j>(1-\eps)u_n, X_k>(1-\eps)u_n \mid X_0>(1-\eps)u_n\big) \label{eq:snjkdef}
	\end{align}
	such that $s_\infty(k)=\lim_{n\to\infty}s_n(k)$  and $\tilde s_\infty(j,k)=\lim_{n\to\infty}\tilde s_n(j,k)$ exist, and \linebreak $\lim_{n\to\infty} \sum_{k=1}^{r_n} s_n(k) = \sum_{k=1}^\infty s_\infty(k)<\infty$ and $\lim_{n\to\infty} \sum_{1\leqslant j\leqslant k\leqslant r_n} \tilde s_n(j,k) = $ \linebreak $ \sum_{1\leqslant j\leqslant k<\infty } \tilde s_\infty(j,k)<\infty$ hold.
	
	Moreover,  there exists $\delta>0$ such that
	\begin{align}
	\sum_{k=1}^{r_n}
	\Big(
	E\Big[
	\Bigl( \log^+ \Bigl( \frac{X_0}{(1-\eps)u_n}\Big)\log^+\Big(\frac{X_k}{(1-\eps)u_n} & \Bigr) \Bigr)^{1+\delta}
	\, \Big| \,
	X_0 > (1-\eps) u_n
	\Big]
	\Big)^{1/(1+\delta)} \nonumber\\
	& = O(1),
	\qquad n \to \infty. \label{eq:psibdd}
	\end{align}
\end{description}
Note that condition (C) could be stated with $s_n(k)$ and $\tilde{s}_n(j,k)$ equal to the corresponding conditional probability resp.\ expectation. Then the existence of limits $s_{\infty}(k)$ and $\tilde{s}_{\infty}(j,k)$  is guaranteed by regular variation. However, it is often difficult to prove that the sums over $s_n(k)$, resp.\ $\tilde{s}_n(j,k)$ converge to the corresponding sums of these limits. In contrast, it may be quite easy to bound the conditional probabilities and expectations by simple expressions and prove convergence of the resulting sums.
\cite{Drees.2015} and \cite{Davis.2018} discussed techniques to verify weaker versions of the conditions \eqref{eq:snkdef} and \eqref{eq:psibdd} for specific time series models. We prove in Appendix B that the more restrictive condition (C) is fulfilled by solutions to stochastic recurrence equations under mild assumptions.

These conditions suffice to prove convergence of the processes of generalized tail array sums centered by their respective means. 
To replace these means by their limits in terms of the spectral process, we need additional conditions which ensure that the bias of the forward, the backward and the Hill type estimators are asymptotically negligible: for all $|t|\in\{1,\ldots,\tilde t\}$  and all sequences $s_n\to 1$ one has
\begin{align}
\sup_{x\in[x_0,\infty)}&\Big|P\Big(\frac{X_t}{X_0}\leqslant x\ \Big|\ X_0>s_nu_n\Big)-F^{(\Theta_t)}(x)\Big|=o((nv_n)^{-1/2}), \label{eq:biasforcond}\\
\sup_{y\in[y_0,\infty)}&\Big|1-E\Big[\Big(\frac{X_{-t}}{X_0}\Big)^{\alpha}\1\{X_0/X_{-t}>y\}\ \Big|\ X_0>s_nu_n\Big]-{F}^{(\Theta_t)}(y)\Big|=o((nv_n)^{-1/2}), \label{eq:biasbackcond}\\
&\Big|E[\log(X_0/(s_nu_n))\ |\ X_0>s_nu_n]-1/\alpha\Big|=o((nv_n)^{-1/2}). \label{eq:biasHillcond}
\end{align}
Note that the convergence of the left hand sides to 0 follows from our basic assumptions. Here, we require that this convergence is sufficiently fast which usually implies an upper bound on the rate at which $nv_n$ tends to $\infty$ (or, equivalently, that the threshold $u_n$ tends to $\infty$ sufficiently fast).

\begin{Thm}
	\label{theo:asnormest}
	Let $(X_t)_{t\in\Z}$ be a stationary, regularly varying time series. If the conditions \eqref{eq:threshconsist}, (A($x_0$)), (B), (C) and \eqref{eq:biasforcond}--\eqref{eq:biasHillcond} are fulfilled for some $x_0\geqslant 0$ and some $y_0\in [x_0,\infty)\cap (0,\infty)$, then
	\begin{multline} \label{eq:weakconv}
	(nv_n)^{1/2}
	\begin{pmatrix}
	\big(\hat{F}_{n,\hat u_n}^{(f,\Theta_t)}(x_t)- F^{(\Theta_t)}(x_t)\big)_{x_t\in[x_0,\infty)} \\
	\big(\hat{F}_{n,\hat u_n}^{(b,\Theta_t)}(y_t)- F^{(\Theta_t)}(y_t)\big)_{y_t\in[y_0,\infty)}
	\end{pmatrix}_{|t|\in\{1,\ldots,\tilde{t}\}} \\
	\rightsquigarrow
	\begin{pmatrix}
	( Z(\phi_{2,x_t,1}^t)-\bar F^{(\Theta_t)}(x_t) Z(\phi_{1,1}))_{x_t\in[x_0,\infty)} \\
	\big(Z(\phi_{3,y_t,1}^t)-\bar F^{(\Theta_t)}(y_t) Z(\phi_{1,1})+Z_{\alpha}(y_t)\big)_{y_t\in[y_0,\infty)}
	\end{pmatrix}_{|t|\in\{1,\ldots,\tilde{t}\}}
	\end{multline}
	with
	\begin{equation*}
	Z_{\alpha}(y_t)=(\alpha^2 Z(\phi_{0,1})-\alpha Z(\phi_{1,1}))E\big[(\log \Theta_t) \, \1\{\Theta_t>y_t\}\big],
	\end{equation*}
	where $Z$ is the centered Gaussian process, indexed by functions defined in \eqref{eq:phi0def}--\eqref{eq:phi3def}, whose covariance function is given in \eqref{eq:cov_emp_pr}, and $\bar F^{(\Theta_t)}:=1-F^{(\Theta_t)}$ denotes the survival function of $\Theta_t$.
\end{Thm}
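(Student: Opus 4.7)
My plan is to represent all the relevant estimators as continuous functionals of generalized tail array sums indexed jointly by a threshold-scaling parameter $s$ in a neighbourhood of $1$ and by the statistical parameters $x$, $y$, $t$, to establish joint weak convergence of these sums as processes on the enlarged index set, and finally to evaluate the limit at $s=S_n$ via a continuous mapping argument.

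First I would rewrite the estimators in the notation of \eqref{eq:phi0def}--\eqref{eq:phi3def}: setting $T_n(\psi):=\sum_{i=1}^n \psi(X_{n,i})$, the text already observes that $\hat{F}_{n,\hat u_n}^{(f,\Theta_t)}(x) = 1 - T_n(\phi_{2,x,S_n}^t)/T_n(\phi_{1,S_n})$. The backward estimator admits an analogous representation, except that in the numerator the true exponent $\alpha$ in $\phi_{3,y,s}^t$ is replaced by the data-driven $\hat\alpha_{n,\hat u_n}=T_n(\phi_{1,S_n})/T_n(\phi_{0,S_n})$. Writing $|X_{i-t}/X_i|^{\hat\alpha_{n,\hat u_n}}=|X_{i-t}/X_i|^{\alpha}\cdot|X_{i-t}/X_i|^{\hat\alpha_{n,\hat u_n}-\alpha}$ and expanding the second factor to first order around $\hat\alpha_{n,\hat u_n}=\alpha$ will ultimately isolate the extra summand $Z_\alpha(y_t)$.

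Next I would invoke the functional CLT of \citet{Drees.2010} for generalized tail array sums to prove that the centered and $(nv_n)^{-1/2}$-normalised process
\begin{equation*}
Z_n(\psi) := (nv_n)^{-1/2}\big(T_n(\psi)-E\,T_n(\psi)\big)
\end{equation*}
converges weakly to a centered Gaussian process $Z$ uniformly over the full function class
\begin{equation*}
\Psi := \big\{\phi_{0,s},\ \phi_{1,s},\ \phi_{2,x,s}^t,\ \phi_{3,y,s}^t : s\in[1-\eps,1+\eps],\ x\geqslant x_0,\ y\geqslant y_0,\ |t|\leqslant \tilde t\big\}.
\end{equation*}
Conditions (B) and (C) supply the mixing, moment and summability inputs required by the CLT, ensure that the limiting covariance is well-defined, and control the finite-sample fluctuations in $L^{1+\delta}$. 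The principal obstacle, which the authors flag already in the introduction, is asymptotic equicontinuity of $Z_n$ over the enlarged two-dimensional index sets $(x,s)$ and $(y,s)$, since the one-parameter monotonicity argument used by \citet{Davis.2018} no longer applies. I would attack this by constructing bracketing covers of small rectangles $[x,x']\times[s,s']$ whose $L^2$-oscillation is controlled through the conditional bivariate tail distribution of $(X_0,X_t)$ respectively $(X_{-t},X_0)$ given $X_0>(1-\eps)u_n$, invoking \eqref{eq:psibdd} to bound the bracketing integral; continuity of $F^{(\Theta_t)}$ on $[x_0,\infty)$ from condition (A($x_0$)) then excludes jumps on the limit side.

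Once process convergence over $\Psi$ is in hand, the remainder is comparatively routine. Using the bias conditions \eqref{eq:biasforcond}--\eqref{eq:biasHillcond} uniformly in deterministic sequences $s_n\to 1$, I would replace $E\,T_n(\psi)$ by its idealised spectral-process limit with an error $o((nv_n)^{1/2})$. The fact that $S_n\to 1$ in probability, together with the sample-path continuity of $Z$ in $s$ at $s=1$ (itself a byproduct of the just-established equicontinuity), permits substituting $s=S_n$ via the continuous mapping theorem and yields the same limit as at $s=1$. Finally, the functional delta method applied to the two ratios, combined with a linearisation of $\hat\alpha_{n,\hat u_n}-\alpha$ that produces the factor $\alpha^2 Z(\phi_{0,1})-\alpha Z(\phi_{1,1})$ multiplying $E[(\log\Theta_t)\1\{\Theta_t>y_t\}]$ in $Z_\alpha(y_t)$, assembles the joint weak convergence \eqref{eq:weakconv}.
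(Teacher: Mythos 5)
Your overall architecture mirrors the paper's proof: lift the generalized tail array sums to an enlarged index set carrying the extra threshold-scaling parameter $s$, establish process convergence via the cluster-functional CLT of Drees and Rootz\'en (2010), substitute $s=S_n$ by a Skorokhod/continuous-mapping argument (this is exactly what the paper's Lemma C.3 does for the Hill estimator and then recycles for the ratios), and finally extract $Z_\alpha(y_t)$ by linearising $\hat\alpha_{n,\hat u_n}-\alpha$. All of that matches. The divergence is at the step you yourself flag as the hard one, the entropy condition (D6). The paper does not use bracketing: for each block length $r$ it forms the block-sum functions $f_{3,y,s}^{(r,t)}=\sum_{i\leqslant r}\phi_{3,y,s}^t(z_i)$, shows their subgraphs form a VC class of index $O(\log r)$, invokes the van der Vaart--Wellner uniform entropy bound, and then shows that the random truncation level $R_{n,\eps_0}$ (the smallest $r$ at which blocks with more than $r$ exceedances contribute less than $\eps_0^2/2$ to the squared $d_n$-distance) grows only polynomially in $\eps_0^{-1}$, giving a bound on $N(\eps_0,\Phi_3^t,d_n)$ that is integrable near $0$.

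This is where your sketch has a genuine gap. Condition (D6) is a \emph{random} entropy condition with respect to the block-level semi-metric $d_n(\psi_1,\psi_2)^2=(nv_n)^{-1}\sum_j\big(\sum_{i\leqslant r_n}(\psi_1-\psi_2)(T^*_{n,j,i})\big)^2$, not an $L^2(P)$-bracketing condition on single-observation oscillations. Bounding the conditional bivariate tail of $(X_0,X_t)$ given $X_0>(1-\eps)u_n$, with \eqref{eq:psibdd} supplying the moment bound, is exactly what is needed for the continuity and variance conditions (the paper's (D2'), (D3)), but it does not by itself control $N(\eps_0,\Phi,d_n)$: a pair $(x,s),(x',s')$ whose $L^2(P)$-oscillation is small can still be far apart in $d_n$ when a single block contains many exceedances, because the discrepancies inside a block are added \emph{before} squaring. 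The VC-subgraph-plus-random-truncation argument in the paper exists precisely to tame this block effect, and the continuity of $F^{(\Theta_t)}$ from (A($x_0$)) only guarantees continuity of the \emph{limit}, not pre-limit equicontinuity. You would either need to reproduce the paper's covering-number argument, or replace Theorem 2.10 of Drees and Rootz\'en (2010) by a bracketing CLT for cluster functionals together with a summability condition on block-level $L^2$-bracketing numbers that your sketch does not formulate. As written, the equicontinuity step is a statement of intent rather than an argument, and it is the crux of the theorem.
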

A detailed representation of the relevant covariances is given in the Supplement.

The limit process is exactly the same as for the forward and backward estimator based on the exceedances over the deterministic threshold $u_n$, derived by \cite{Davis.2018}.  So, retrospectively, it is justified that \cite{Davis.2018} used order statistics $X_{n-k_n:n}$ as thresholds $\hat u_n$ instead of $u_n=F^\leftarrow(1-k_n/n)$, since the following lemma shows that,  under the conditions used in Theorem \ref{theo:asnormest}, order statistics indeed fulfill \eqref{eq:threshconsist}.
\begin{Lem} \label{lemma:OSconsist}
	Let $(k_n)_{n\in\N}$ be an intermediate sequence, that is $k_n\to\infty$ and $k_n/n\to 0$.
	If condition (B) holds and there are $s_n(k)\geqslant P\big(X_k>(1-\eps)u_n\mid X_0>(1-\eps)u_n\big)$ such that $s_\infty(k)=\lim_{n\to\infty}s_n(k)$ exists, and $\lim_{n\to\infty} \sum_{k=1}^{r_n} s_n(k) = \sum_{k=1}^\infty s_\infty(k)<\infty$,
	then $X_{n-k_n:n}/F^\leftarrow(1-k_n/n)\to 1$ in probability.
\end{Lem}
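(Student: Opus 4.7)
The plan is to reduce the claim to a weak law of large numbers for the tail count $N_n(c) := \sum_{i=1}^n \1\{X_i > cu_n\}$ with $u_n := F^\leftarrow(1-k_n/n)$. Since $\{X_{n-k_n:n} > cu_n\} = \{N_n(c) \geqslant k_n\}$, once $N_n(c)/k_n \to c^{-\alpha}$ in probability is established for every $c$ in a neighborhood of $1$, the inequalities $(1+\delta)^{-\alpha} < 1 < (1-\delta)^{-\alpha}$ applied at $c = 1\pm\delta$ yield $P(|X_{n-k_n:n}/u_n - 1| > \delta) \to 0$ for any $\delta > 0$, which is the assertion. Regular variation gives $E[N_n(c)] = n\bar F(cu_n) \sim k_n c^{-\alpha}$, so by Chebyshev it suffices to show $\var(N_n(c)) = o(k_n^2)$.

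To that end I will employ a big-block/small-block decomposition. I partition $\{1,\ldots,n\}$ into $b_n := \lfloor n/r_n\rfloor$ consecutive blocks of length $r_n$, each split into a big sub-block of length $r_n - l_n$ followed by a small sub-block of length $l_n$, and write $N_n(c) = \sum_{j=1}^{b_n} T_j + R_n$, where $T_j$ is the count on the $j$-th big sub-block and $R_n$ absorbs the small sub-blocks together with the leftover indices. For $c \geqslant 1 - \eps$, $\1\{X_i > cu_n\}$ is $\sigma(X_{n,i})$-measurable, so distinct $T_j$'s live in $\sigma$-fields separated by gaps of length $l_n$ in the $(X_{n,l})$-index, and standard $\beta$-mixing coupling produces iid copies $T_j^*$ of $T_1$ that coincide with the original $(T_j)$ outside an event of probability at most $b_n\beta_{n,l_n} = o(1)$ under (B). Using the hypothesis $s_n(k) \geqslant P(X_k > (1-\eps)u_n \mid X_0 > (1-\eps)u_n)$ in the form $P(X_0 > cu_n, X_k > cu_n) \leqslant \bar F((1-\eps)u_n)\,s_n(k)$ for $c \geqslant 1 - \eps$, together with the summability $\sum_k s_n(k) = O(1)$, I obtain $E[T_1^2] = O(r_n v_n)$, whence $\var\bigl(\sum_j T_j^*\bigr) \leqslant b_n E[T_1^2] = O(nv_n) = O(k_n)$. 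Chebyshev then forces $\sum_j T_j^*/k_n \to c^{-\alpha}$ in probability, and the coupling transfers this to $\sum_j T_j$. Finally, $E[R_n] = O(b_nl_nv_n + r_nv_n) = O(k_n\,l_n/r_n) + o(k_n) = o(k_n)$ by (B), so $R_n/k_n \to 0$ in probability by Markov, completing the weak law.

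The hard part is that a direct pairwise-covariance summation at lags exceeding $r_n$ would need absolute summability of $\beta_{n,k}$, which (B) does not supply; the reduction to independent big sub-blocks via coupling is therefore essential. The parameters $l_n, r_n$ must simultaneously make the small-block bias ($l_n/r_n \to 0$), the truncation remainder ($r_n = o((nv_n)^{1/2})$) and the coupling cost ($n\beta_{n,l_n}/r_n \to 0$) all negligible, which is exactly what (B) guarantees, while the tail-sum hypothesis on $s_n(k)$ furnishes the cluster-variance bound needed for the second moment of $T_1$.
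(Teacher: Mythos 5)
Your proof is correct, and it takes a genuinely different — and more self-contained — route than the paper's. The paper reduces the claim to the already-established (finite-dimensional) asymptotic normality of $\tilde Z_n(\phi_{1,s})$ from Proposition C.1: it writes $\{X_{n-k_n:n}>(1\pm\delta)u_n\}$ as a tail event for the centered and scaled count $\tilde Z_n(\phi_{1,1\pm\delta})$, then uses regular variation of $F^\leftarrow$ to show the corresponding threshold is of order $k_n^{1/2}\to\infty$, so tightness of $\tilde Z_n$ kills the probability. You instead prove a weak law of large numbers for $N_n(c)$ from scratch, via the same big-block/small-block decomposition and Berbee-type coupling that underlies the process convergence, together with a direct second-moment bound on a block driven by the $s_n(k)$ summability. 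What your version buys is a lower-level argument that does not lean on the process machinery of Proposition C.1 at all and that only needs $\var(N_n(c))=o(k_n^2)$ rather than a Gaussian limit — making it transparent that the hypotheses of the lemma (which are weaker than those of the Proposition, lacking $(\text{A}(x_0))$, $\tilde s_n(j,k)$ and \eqref{eq:psibdd}) really do suffice. What the paper's version buys is brevity, since the heavy lifting (blocking, coupling, second-moment bound) has already been done once in the proof of the Proposition and need not be repeated. Your observation that a direct covariance summation over all lags would require absolute summability of $\beta_{n,k}$, which (B) does not supply, is exactly the reason why some blocking device is indispensable in either route.
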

Note that the existence of the constants $s_n(k)$ follows from assumption \eqref{eq:snkdef} in condition (C).
The consistency of intermediate order statistics is also an immediate consequence of Theorem 2.1 of \cite{Drees.2003}, which has been proved under somewhat different conditions.

\subsection*{Multiplier block bootstrap}

The covariance function of the limit process in  Theorem \ref{theo:asnormest} is too complex to be directly used for the construction of confidence regions for $F^{(\Theta_t)}(x)$. Therefore, we resort to a resampling method.
\cite{Davis.2018} proposed two bootstrap schemes: the stationary block bootstrap as used in \cite{Davis.2012} and the multiplier block bootstrap as used in \cite{Drees.2015b}. The multiplier block bootstrap versions of the forward and backward estimator are given by
\begin{align*} 
\hat{F}_{n,u_n}^{*(f,\Theta_t)}(x)&:=\frac{\sum_{j=1}^{m_n}(1+\xi_j)\sum_{i\in I_j}\1\{X_{i+t}/|X_i|\leqslant x, |X_i|>{u}_n\}}{\sum_{j=1}^{m_n}(1+\xi_j)\sum_{i\in I_j}\1\{|X_i|>{u}_n\}},\\
\hat{F}_{n,u_n}^{*(b,\Theta_t)}(x)&:=
1-\dfrac{\sum_{j=1}^{m_n}(1+\xi_j)\sum_{i\in I_j}|\frac{X_{i-t}}{X_i}|^{\hat{\alpha}_{n,u_n}^*}\1\{X_{i}/|X_{i-t}|> x,|X_i|>{u}_n\}}{\sum_{j=1}^{m_n}(1+\xi_j)\sum_{i\in I_j}\1\{|X_i|>{u}_n\}}
\end{align*}
for $x\geqslant 0$, with
\begin{align*}
\hat{\alpha}_{n,u_n}^*:=\frac{\sum_{j=1}^{m_n}(1+\xi_j)\sum_{i\in I_j}\1\{|X_i|>{u}_n\}}{\sum_{j=1}^{m_n}(1+\xi_j)\sum_{i\in I_j}\log(|X_i|/{u}_n)\1\{|X_i|>{u}_n\}},
\end{align*}
where $\xi_j$, $j\in\N$, are (bounded) iid random variables independent of $(X_t)_{t\in\Z}$ with $E[\xi_j]=0$ and $\var(\xi_j)=1$, and $I_j=\{(j-1)r_n+1,\dots,jr_n\}$. \cite{Davis.2018} proved that these multiplier block bootstrap versions are consistent when the thresholds $u_n$ are deterministic and suitable chosen. (The  validity of the stationary block bootstrap versions of the forward and backward estimator was not established.)
In simulations, both bootstrap methods were applied with order statistics as threshold. The approximation of the error distribution obtained by multiplier block bootstrap turned out to be much more accurate than that of the stationary bootstrap. Here we will give an asymptotic justification for using multiplier block bootstrap with random  instead of deterministic thresholds.

For the sake of brevity, we focus on estimators of $F^{(\Theta_t)}(x)$ for a fixed $x\geqslant x_0$. By $P_{\xi}$ we denote probabilities w.r.t.\ $\xi=(\xi_j)_{j\in\N}$, i.e., conditional probabilities given $(X_{n,i})_{1\leqslant i\leqslant n}$.
\begin{Thm}\label{theo:bootstrap}
	Let $\xi_j$, $j\in\N$, be bounded iid random variables with $E[\xi_j]=0$ and $\var(\xi_j)=1$ independent of $(X_t)_{t\in\Z}$. Then, under the conditions of Theorem \ref{theo:asnormest}, for all $x\geqslant x_0$, $y\geqslant y_0$,
	\begin{align*}
	\sup_{r,s\in\R^{2\tilde{t}}}&\Big| P_{\xi}\Big[ (nv_{n})^{1/2}\big(\hat{F}_{n,\hat{u}_n}^{*(f,\Theta_t)}(x)-\hat{F}_{n,\hat{u}_n}^{(f,\Theta_t)}(x)\big)\leqslant r_t,\\
	&\hspace{8ex} (nv_{n})^{1/2}\big(\hat{F}_{n,\hat{u}_n}^{*(b,\Theta_t)}(y)-\hat{F}_{n,\hat{u}_n}^{(b,\Theta_t)}(y)\big)\leqslant s_t,\forall|t|\in\{1,\dots,\tilde{t}\}\Big]\\
	&-P_{\xi}\Big[ (nv_{n})^{1/2}\big(\hat{F}_{n,\hat{u}_n}^{(f,\Theta_t)}(x)-F^{(\Theta_t)}(x)\big)\leqslant r_t,\\
	&\hspace{8ex} (nv_{n})^{1/2}\big(\hat{F}_{n,\hat{u}_n}^{(b,\Theta_t)}(y)-F^{(\Theta_t)}(y)\big)\leqslant s_t,\forall|t|\in\{1,\dots,\tilde{t}\}\Big]\Big| \to 0
	\end{align*}
	in probability.
\end{Thm}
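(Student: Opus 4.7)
The plan is to mirror the strategy of Theorem \ref{theo:asnormest} and lift the deterministic-threshold bootstrap result of \cite{Davis.2018} to the random-threshold setting. All bootstrap statistics in the theorem can be written as smooth functionals (ratios, and in the backward case a power $\hat\alpha^*$) of multiplier-bootstrap generalized tail array sums
\begin{equation*}
T_n^*(\psi_s) := (nv_n)^{-1/2}\sum_{j=1}^{m_n}\xi_j\sum_{i\in I_j}\psi_s(X_{n,i}),
\end{equation*}
where $\psi_s$ ranges over the class $\{\phi_{0,s},\phi_{1,s},\phi_{2,x,s}^t,\phi_{3,y,s}^t\}$ with $s\in[1-\eps,1+\eps]$ and $|t|\leqslant\tilde t$. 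Evaluation at $s=S_n$, which lies in $[1-\eps,1+\eps]$ with probability tending to $1$ by \eqref{eq:threshconsist}, produces the bootstrap estimators based on $\hat u_n$, exactly as in the proof of Theorem \ref{theo:asnormest}.

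The central step is to prove that, conditionally on $(X_{n,i})_{1\leqslant i\leqslant n}$ and in outer probability, $T_n^*$ (recentered by its within-data mean) converges weakly in $\ell^\infty$ over the above function class to the Gaussian process $Z$ of Theorem \ref{theo:asnormest}. Conditional finite-dimensional convergence is a Lindeberg CLT for triangular arrays of independent random variables: the block sums $\xi_j\sum_{i\in I_j}\psi_s(X_{n,i})$ are independent given the data, their variances converge to the kernel \eqref{eq:cov_emp_pr} thanks to conditions (B) and (C), and the boundedness of $\xi_j$ together with \eqref{eq:psibdd} supplies the Lindeberg condition. Conditional asymptotic equicontinuity in $(s,\psi)$ is the harder ingredient; I would adapt the bracketing/chaining argument used for the non-bootstrap process in the proof of Theorem \ref{theo:asnormest}, replacing the Bernstein-type bounds for the mixing process by their multiplier counterparts conditional on the data. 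The monotonicity of $s\mapsto\1\{z_0>s\}$ in $\phi_{1,s}$ and $\phi_{2,x,s}^t$ yields $L^2$-brackets in $s$, while the unbounded $\phi_{0,s}$ and $\phi_{3,y,s}^t$ are handled via the envelope moments from \eqref{eq:psibdd} and the summability of $s_n(k)$, $\tilde s_n(j,k)$.

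Once conditional process convergence is established, evaluation at $s=S_n$ is legitimate by equicontinuity, and the functional delta method for Hadamard-differentiable functionals (ratios, and the power map in $\alpha$ whose Hadamard differentiability was verified in \cite{Davis.2018}) transfers this to joint conditional convergence of $(\hat F_{n,\hat u_n}^{*(f,\Theta_t)}-\hat F_{n,\hat u_n}^{(f,\Theta_t)},\hat F_{n,\hat u_n}^{*(b,\Theta_t)}-\hat F_{n,\hat u_n}^{(b,\Theta_t)})_{|t|\leqslant\tilde t}$ to the same Gaussian law as on the right-hand side of \eqref{eq:weakconv}, with the bias conditions \eqref{eq:biasforcond}--\eqref{eq:biasHillcond} ensuring that the bootstrap centerings align. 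Since this Gaussian limit has a continuous multivariate distribution function, Polya's theorem upgrades pointwise to uniform convergence of CDFs in $(r,s)$, and the sup difference in the statement tends to $0$ in probability. The main obstacle is precisely the conditional asymptotic equicontinuity with respect to the scale parameter $s$: the multiplier randomization simplifies the dependence structure but forces a full re-derivation of the chaining estimates inside the conditional distribution given the data, with constants controlled by the same envelope and covariance bounds underlying the proof of Theorem \ref{theo:asnormest}.
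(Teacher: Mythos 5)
Your overall architecture is right — lift the deterministic-threshold bootstrap theorem to random thresholds by indexing the multiplier tail-array sums with the additional scale parameter $s$, establish conditional process convergence over the extended class, and evaluate at $s=S_n$ using continuity of limit sample paths — but you take a much longer route to the central step than the paper does, and you slightly mischaracterize the underlying equicontinuity machinery.

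The paper does \emph{not} re-derive conditional asymptotic equicontinuity for the bootstrap process. It instead invokes Corollary~2.7 of Drees (2015b), a general multiplier-block-bootstrap CLT which says that once the (D1)--(D6)-type conditions of Drees and Rootz\'en (2010) have been verified for the class $\Phi$ (which is precisely the content of Proposition~\ref{prop1}, the hard part of the paper), the centered multiplier process $Z_n^*(\psi) = (nv_n)^{-1/2}\sum_{j=1}^{m_n}\xi_j\sum_{i\in I_j}(\psi(X_{n,i})-E\psi(X_{n,i}))$ converges to the same limit $Z$, both unconditionally and conditionally on the data. So the equicontinuity work you propose to redo inside the conditional law has already been paid for at the level of the non-bootstrap process and transfers for free. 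Your plan to ``re-derive the chaining estimates conditional on the data'' is not wrong in principle — conditional on the data the block sums are independent, so an i.i.d.\ symmetrization/chaining argument is available — but it duplicates Proposition~\ref{prop1} and is considerably more work than citing the general multiplier result. Also note: the equicontinuity proof for $\tilde Z_n$ in the paper is a VC/random-covering-number argument verifying condition (D6), not a bracketing argument; the $\phi_{2}$ and $\phi_{3}$ families are \emph{not} linearly ordered in $(x,s)$ resp.\ $(y,s)$, which is exactly why the paper resorts to VC subgraph bounds on the blockwise sums $f_{3,y,s}^{(r,t)}$ rather than monotone brackets. Your remark that ``monotonicity of $s\mapsto\1\{z_0>s\}$ yields $L^2$-brackets'' only covers the one-parameter classes $\{\phi_{0,s}\}$ and $\{\phi_{1,s}\}$, not the two-parameter ones that create the difficulty.

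Two smaller points. First, the paper explicitly invokes Corollary~3.9 of Drees and Rootz\'en (2010) to reduce w.l.o.g.\ to $n=m_nr_n$ before comparing bootstrap and non-bootstrap statistics; you would need some such step before working blockwise. Second, in substituting $\hat u_n = S_nu_n$ into the argument of Davis et al.\ (2018, Theorem~3.3), there is an extra drift term $r_n\sum_j\xi_j P\{X_0>su_n\}\big|_{s=S_n}$ whose stochastic order $r_nm_n^{1/2}v_n = o((nv_n)^{1/2})$ must be checked; your sketch glosses over this. With these corrections your approach would carry through, but the paper's citation of the general multiplier CLT makes it essentially a bookkeeping exercise.
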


This result shows that the approach to constructing confidence intervals used in the simulations of \cite{Davis.2018} is mathematically sound.

\section{Simulations}
\label{sec:simus}

In this section, we compare the finite-sample behavior of the estimators of the survival function of the tail spectral process at lag $t$ based on the exceedances over deterministic thresholds and exceedances over the corresponding order statistics, respectively. Specifically, we examine the forward estimator (SFE) $1-\hat{F}_{n,u}^{(f,\Theta_t)}(x)$ defined in \eqref{eq:forwarddef}
and the backward estimator (SBE) $1-\hat{F}_{n,u}^{(b,\Theta_t)}(x)$  of $P\{\Theta_t>x\}$ given in \eqref{eq:backwarddef}.
Here $u$ is either a theoretical quantile $F^\leftarrow(1-k/n)$ or the corresponding order statistic $X_{n-k:n}$. We focus on the differences between the estimators for the different types of thresholds. A detailed comparison of the finite-sample performance of the forward and the backward estimator with random threshold is given in \cite{Davis.2018}. To keep the presentation short, here we report detailed results only for lag $t=1$ and $x\in\{1/2,1\}$. Further results are given in the supplementary material.

We consider two classes of time series models: GARCH(1,1) time series and Markovian time series.
Recall that the GARCH(1,1) model is given by
\begin{equation*}
X_t=\sigma_t\varepsilon_t \quad\text{ with }\quad \sigma_t^2=\alpha_0+\alpha_1X_{t-1}^2+\beta_1\sigma_{t-1}^2,
\end{equation*}
$t\in\Z$,  where $\alpha_0\in(0,\infty),$ $\alpha_1,\beta_1\in[0,\infty)$, and $\varepsilon_t$, $t\in\Z$, are iid innovations. Under suitable conditions, the GARCH(1,1) time series are regularly varying \citep[Theorem 3.1]{Basrak.2002b} and $\beta$-mixing \citep[Theorem 8]{Andersen.2009}.
The innovations  are chosen either standard normal (leading to the so-called nGARCH model) or  standardized $t$-distributed with 4 degrees of freedom (denoted by tGARCH). Moreover, we use the parameters $\alpha_0=0.1$, $\alpha_1=0.14$ and $\beta_1=0.84$. This choice results in a tail index $\alpha$ of about 4.02 in the nGARCH case and 2.6 for the tGARCH model.

The distribution of the forward spectral tail process $(\Theta_t)_{t\in\N}$ is given in Proposition 6.2 of   \cite{Ehlert.2015}. In particular the marginals can be represented as follows:
\begin{equation}\label{s1}
\La(\Theta_t)=\La\bigg(\frac{\tilde{\varepsilon}_t}{|\tilde{\varepsilon}_0|}\prod_{i=1}^t(\alpha_1\tilde{\varepsilon}_{t-i}^2+\beta_1)^{1/2}\bigg)
\end{equation}
for all $t\in\N$, where $\tilde{\varepsilon}_t$, $t\in\N_0$, are independent and, except for $\tilde{\varepsilon}_0$, distributed as $\varepsilon_1$. The random variable $\tilde{\varepsilon}_0$ has density $h(x)=g(x){|x|^{\alpha}}/{E[|\varepsilon_0|^{\alpha}]},$ $x\in\R$, with $g$ denoting the density of $\varepsilon_1$ and $\alpha$ the index of regular variation of $(X_t)_{t\in\Z}$.

The distribution of a stationary Markovian time series is determined by its one-dimensional marginal distribution and the copula of two consecutive observations $(X_0,X_1)$. Such a time series is regularly varying if the one-dimensional marginal distribution has a density and is regularly varying with balanced tails, and the copula satisfies certain regularity conditions locally at the corners of its domain $[0,1]^2$. The forward spectral process $(\Theta_t)_{t\ge 0}$ is then a geometric random walk which is determined by the copula of $(X_0,X_1)$, the tail index $\alpha$ of the marginal distribution and the relative weight of the left and right tails. See \cite[Proposition 5.1]{Drees.2015} for details.
\newpage
We simulated Markovian time series with Student's $t$-distribution $F_{t,\nu}$ with $\nu=4$ degrees of freedom as marginal and either a copula of a bivariate $t$-distribution
\begin{equation*}
C_{\nu,\rho}^t(u,v)=\int_{-\infty}^{F_{t,\nu}^{\leftarrow}(u)}\int_{-\infty}^{F_{t,\nu}^{\leftarrow}(v)}\frac{1}{2\pi(1-\rho^2)^{1/2}}\left(1+\frac{x^2-2\rho xy+y^2}{\nu(1-\rho^2)}\right)^{-(\nu+2)/2}\ \textrm{d}x\ \textrm{d}y,
\end{equation*}
with $\nu=4$ degrees of freedom and  $\rho\in\{0.25, 0.5, 0.75\}$,
or a Gumbel-Hougaard copula
\begin{align*}
C_{\vartheta}^{gum}(u,v)=\exp\left(-[(-\log u)^{\vartheta}+(-\log v)^{\vartheta}]^{1/\vartheta}\right)
\end{align*}
with $\vartheta\in\{1.2, 1.5, 2.0\}$. For brevity's sake, here we report detailed results only for the former model with $\rho=0.25$, referred to as tCopula model in what follows. Its index of regular variation is $\alpha=4$; the distribution of $\Theta_1$ is given in \cite[Example A.1]{Drees.2015}.

To investigate the performance of the forward and backward estimator, we generate $1\,000$ GARCH(1,1) time series of length $n=2\,000$ for each of the models. The forward and backward estimators are based on theoretical quantiles (TQ) of $|X_0|$ at level $\beta\in\{0.9,0.95\}$ resp.\ on the corresponding order statistics (OS).
Since no closed form of the marginal distribution is available for the GARCH models, we calculate
the quantile of $|X_0|$ via Monte Carlo simulations.  More precisely, it is approximated by the average of the $\lfloor m\beta\rfloor$-th order statistics of $|X_1|,\dots,|X_m|$ of a time series of length $m=10^8$, obtained in 100  Monte Carlo simulations. The empirical standard deviation of these 100 realisations divided by $\sqrt{100}$ gives an estimate of the standard deviation of this approximation of the true quantile  (see Table \ref{tab1}).

The true probabilities $P\{\Theta_1>x \}$ (reported in Table \ref{tab2}) were calculated numerically. In addition, in Table \ref{tab3} we give the so-called pre-asymptotic quantities
\begin{align*}
p_{\beta}(x)&:=P\left(\frac{X_1}{|X_0|}>x\ \bigg|\ |X_0|>F^{\leftarrow}(\beta)\right)\quad\text{ and }\\ e_{\beta}(x)&:=E\bigg[\left|\frac{X_{-1}}{X_0}\right|^{1/a_\beta}\1\{X_0/|X_{-1}|>x\}\ \bigg|\ |X_0|>F^{\leftarrow}(\beta)\bigg] \quad \text{ with}\\
a_\beta & := E\Big[ \log\big(|X_0|/F^{\leftarrow}(\beta)\big) \ \Big|\ |X_0|>F^{\leftarrow}(\beta)\Big]
\end{align*}
for $x\in\{1/2, 1\}$ and $\beta\in\{0.9,0.95\}$. Note that the Hill-type estimator based on exceedances over a true quantile is an empirical counterpart to $a_\beta$, and
the forward and the backward estimators can be considered empirical counterparts to $p_{\beta}(x)$ and $e_{\beta}(x)$, respectively. One might thus expect that the estimates are concentrated around the corresponding pre-asymptotic values rather than the limit probability $P\{\Theta_1>x \}$.
These values can be calculated numerically for the Markovian copula models, whereas for the GARCH(1,1) model we again resorted to Monte Carlo simulations with the same design  as used for the approximation of the true quantiles (but with times series of length $m=10^7$).

Each of the following figures, which visualize our main findings, comprise four plots.  The two upper plots correspond to the forward estimator, the lower plots show results for the backward estimator. On the left hand side the results for the estimators based on exceeances over the theoretical resp.\ empirical $90\%$ quantile are displayed, while the plots on the right hand side correspond to the $95\%$ level.

Figure \ref{fig1} shows Q-Q plots of the estimators of $P\{\Theta_1>1\}\approx 0.0549$ based on theoretical quantiles vs the one based on order statistics in the nGARCH model.
Overall, the Q-Q plots are close to the main diagonal, which confirms our asymptotic results. However,
for small values the points usually lie below the main diagonal. This means that the estimators based on exceedances over theoretical quantiles tend to underestimate the true value more severely. A closer inspection of the simulation results reveals that this effect mainly occurs when few absolute observations exceed the quantile, which leads to an unreliable estimate of the TQ-version (whereas the number of exceedances is fixed in the OS-version). Indeed, in particular for $\beta=0.95$, in some simulations the TQ-versions of the estimators are very close to 0.

\begin{figure}[tb]
	\centering
	\includegraphics[width=1\textwidth]{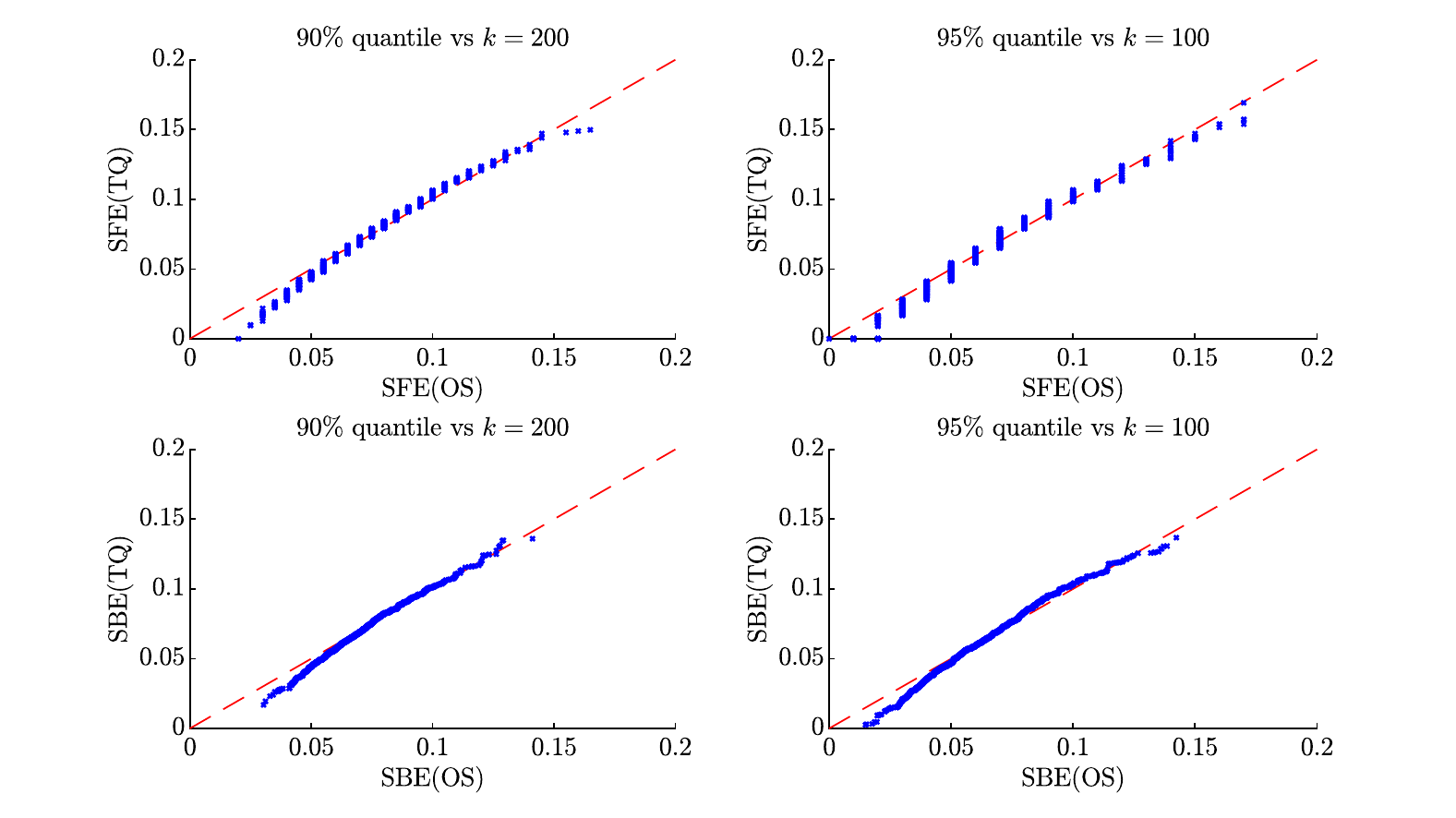}
	\caption{ Q-Q plots of the forward (top) and backward (bottom) estimator of $P\{\Theta_1>1\}$ using OS as threshold compared with TQ as threshold in the nGARCH model; left: $90\%$ level; right: $95\%$ level. The dashed red line is the main diagonal.}
	\label{fig1}
\end{figure}

Note that the discrete nature of the $x$-coordinates of the points in the upper plots is due to the fact that the forward estimator based on the exceedances over $X_{n-k:n}$ can assume only values in $\{i/k \mid 0\leqslant  i\leqslant k\}$. This is also obvious from the corresponding empirical cdfs shown in Figure \ref{fig2}. The solid black vertical line indicates the probability $P\{\Theta_1>1\}$, whereas the dash-dotted black line shows the pre-asymptotic values  $p_\beta(1)$ (for the forward estimator) resp.\ $e_\beta(1)$ (for the backward estimator). As expected, the cdfs are approximately centered at the corresponding pre-asymptotic values.

These graphs again show that the estimators which are based on theoretical quantiles tend to underestimate the true value more often. However, due to the difference between the pre-asymptotic value and their limit, all estimators usually overestimate 

\begin{figure}[H]
	\centering
	\includegraphics[width=1\textwidth]{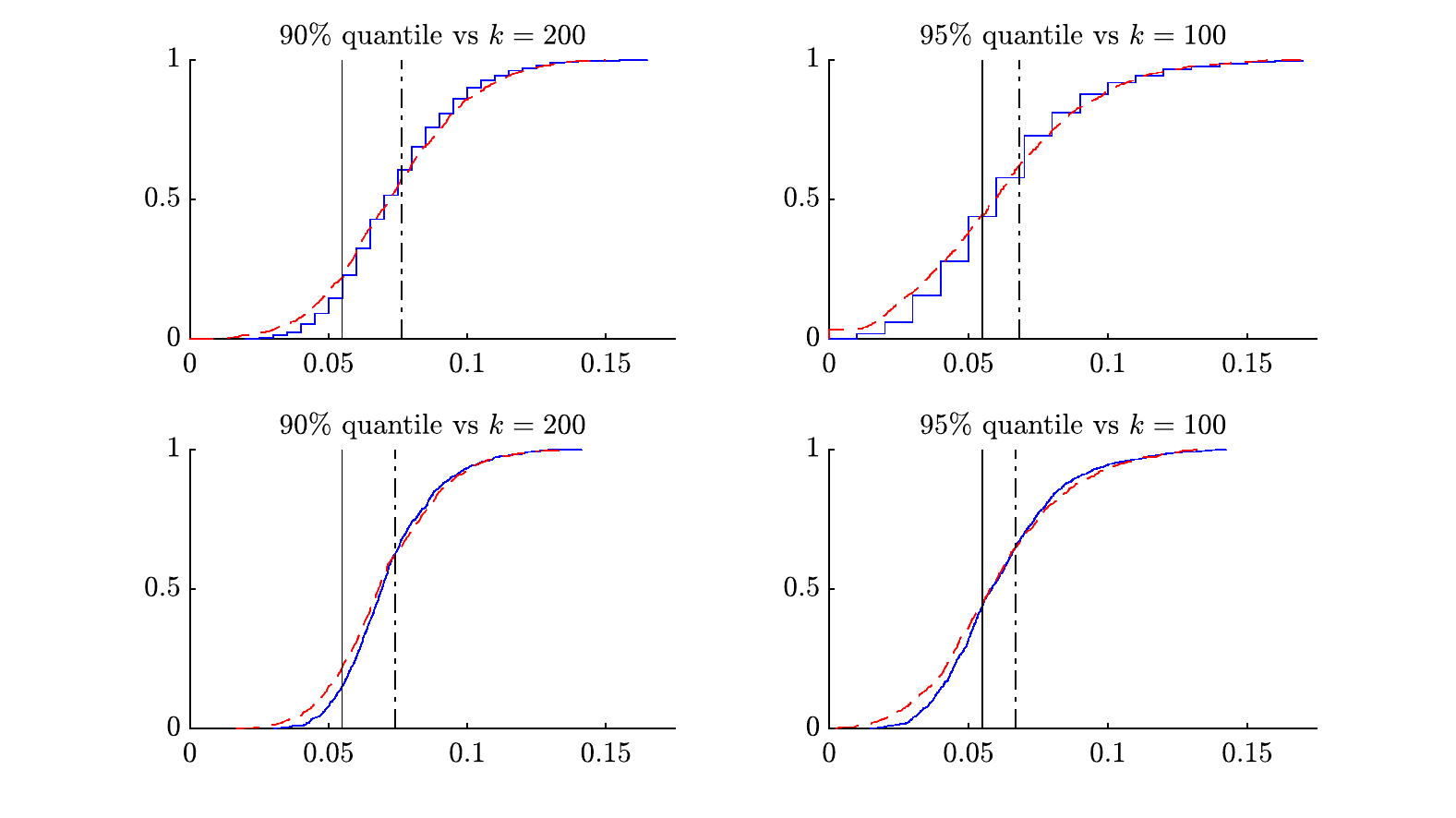}
	\caption{ Empirical cdfs of the forward (top) and backward (bottom) estimators of $P\{\Theta_1>1\}$ using TQ (dashed red line) and OS (solid blue line) as thresholds in the nGARCH model. The solid black vertical line indicates the probability $P\{\Theta_1>1\}$, whereas the dash-dot black vertical line indicates the pre-asymptotic values $p_{\beta}(1)$ in the upper two plots and $e_{\beta}(1)$ in the lower two plots.}
	\label{fig2}
\end{figure}
\begin{figure}[H]
	\centering
	\includegraphics[width=1\textwidth]{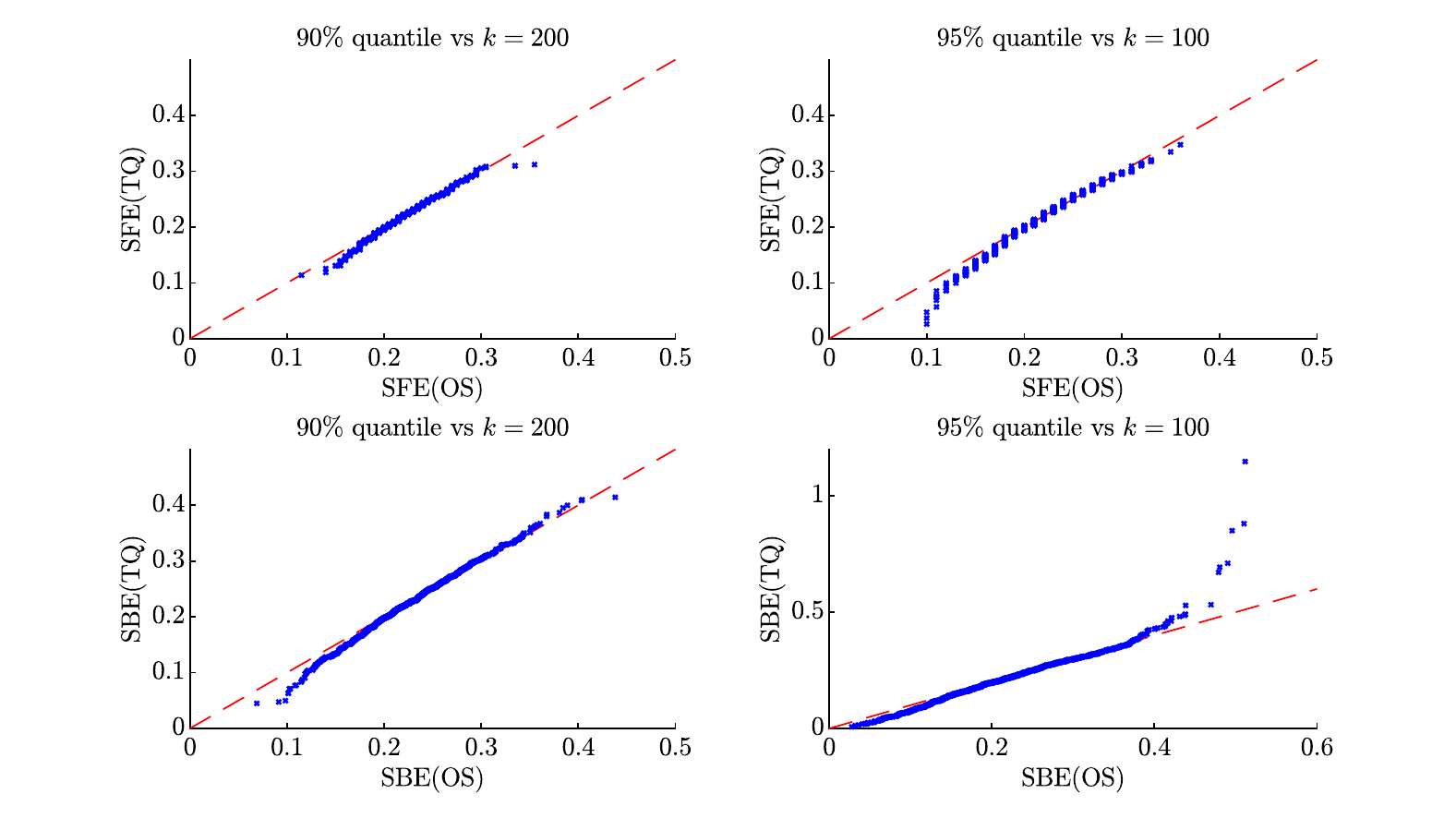}
	\caption{ Q-Q plots of the forward (top) and backward (bottom) estimator of $P\{\Theta_1>1/2\}$ using OS as threshold compared with TQ as threshold in the nGARCH model; left: $90\%$ level; right: $95\%$ level. The dashed red line is the main diagonal.}
	\label{fig3}
\end{figure}
\noindent the asymptotic value. Furthermore, as already reported in \cite{Davis.2018}, the backward estimator of $P\{\Theta_t>1\}$ performs better than the forward estimator.

Figure \ref{fig3} displays  Q-Q plots for the estimators of $P\{\Theta_1 >1/2\}$. By and large, the same effects occur as in Figure \ref{fig1}. In addition, in some cases the TQ-version of the backward estimator strongly overestimates the true value; in one simulation with $\beta=0.95$, it even gives an estimate larger than 1. This  is again mainly due to simulations when relatively few observations exceed the quantile in absolute value. Indeed, for $x=1/2$, the forward estimator is preferable in the nGARCH model.

The results for the tGARCH model and for lags $t\in\{3,5\}$ are qualitatively the same. For this reason, they are not shown here.

\begin{figure}[bt]
	\centering
	\includegraphics[width=1\textwidth]{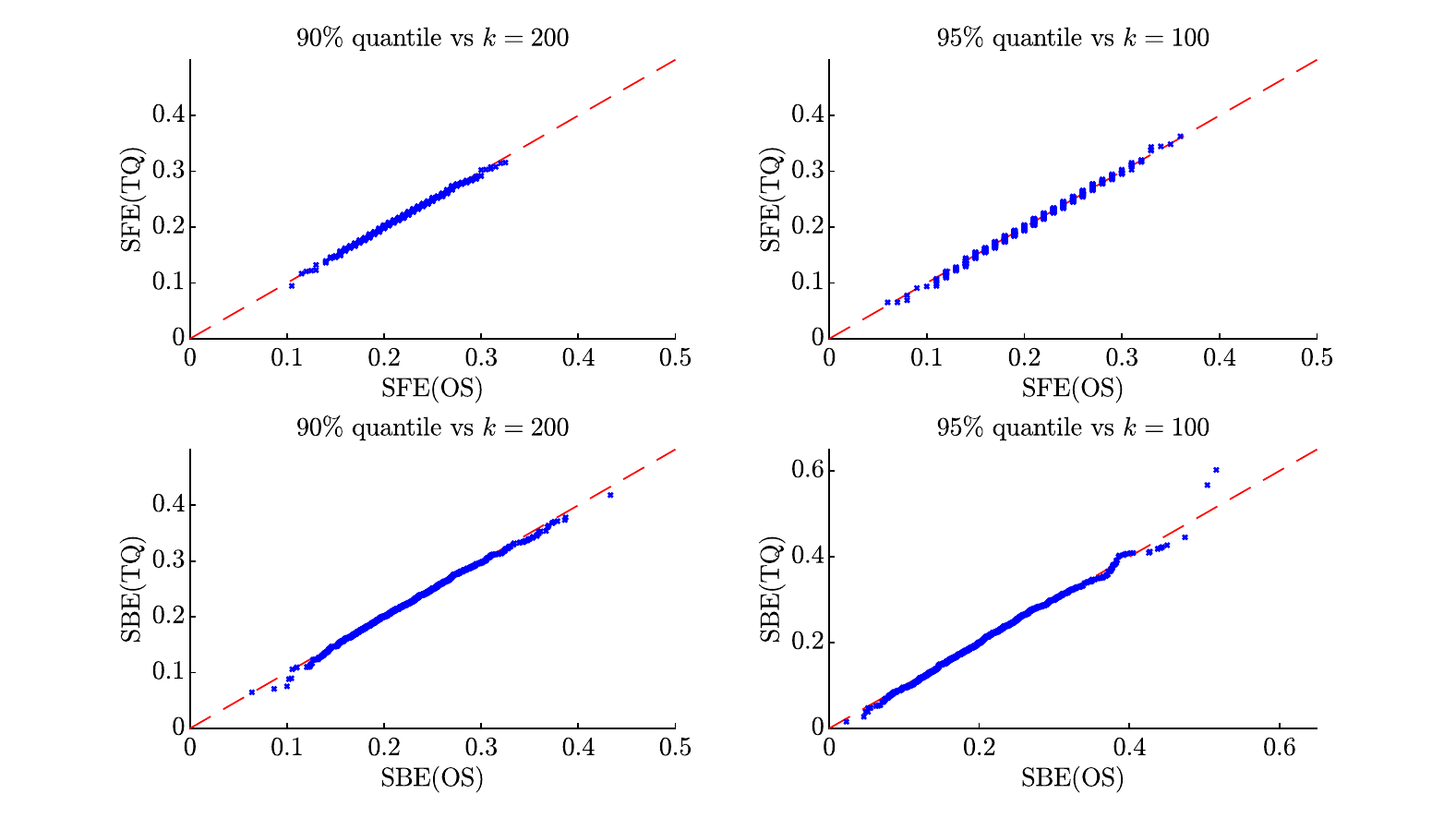}
	\caption{ Q-Q plots of the forward (top) and backward (bottom) estimator of $P\{\Theta_1>1/2\}$ in the tCopula model with $\rho=0.25$ using OS as threshold compared with TQ as threshold; left: $90\%$ level; right: $95\%$ level. The dashed red line is the main diagonal.}
	\label{fig4}
\end{figure}

Next we investigate the estimators of $P\{\Theta_1>1/2\}\approx0.1831$ in the Markovian tCopula model  with $\rho=0.25$. The Q-Q plots of the TQ-versions vs the OS-versions (Figure \ref{fig4}) are even closer to the main diagonal than in the GARCH(1,1) models, with only minor fluctuations around the main diagonal for large values of the backward estimator. The results  for estimators of $P\{\Theta_1>1/2\}$ in the other Markovian copula models and for larger lags look similar.

So far we have compared the {\em distributions} of the two versions of forward resp.\ backward estimators, which according to Theorem \ref{theo:asnormest} have the same asymptotic behavior. Except for some differences in the tails, the distributions were close together also for finite sample sizes. Therefore, it suggests itself to examine whether there is a closer relationship between both version, in that for each simulation both versions give similar {\em estimates}, or that the difference between both versions is typically of smaller order than the variability of each of the estimators.

In Figure \ref{fig5} we plot the estimates of $P\{\Theta_1>1/2\}$ based on exceedances over TQ vs the estimates based on exceedances over OS in the usual format for the tCopula model with $\rho=0.25$.
Although the relationship between the realizations of both versions is obviously weaker than the relation between the order statistics (shown in Figure \ref{fig4}), the estimator based on exceedances over a random threshold seems quite an accurate predictor for the estimator based on exceedances over a theoretical quantile. Indeed, the variance of the difference of both versions is merely between 6.5\% (for the forward estimator to the 90\% level) and 11.4\% (for the backward estimator to the 95\% level) of the variance of the TQ-version.

\begin{figure}[bt]
	\centering
	\includegraphics[width=1\textwidth]{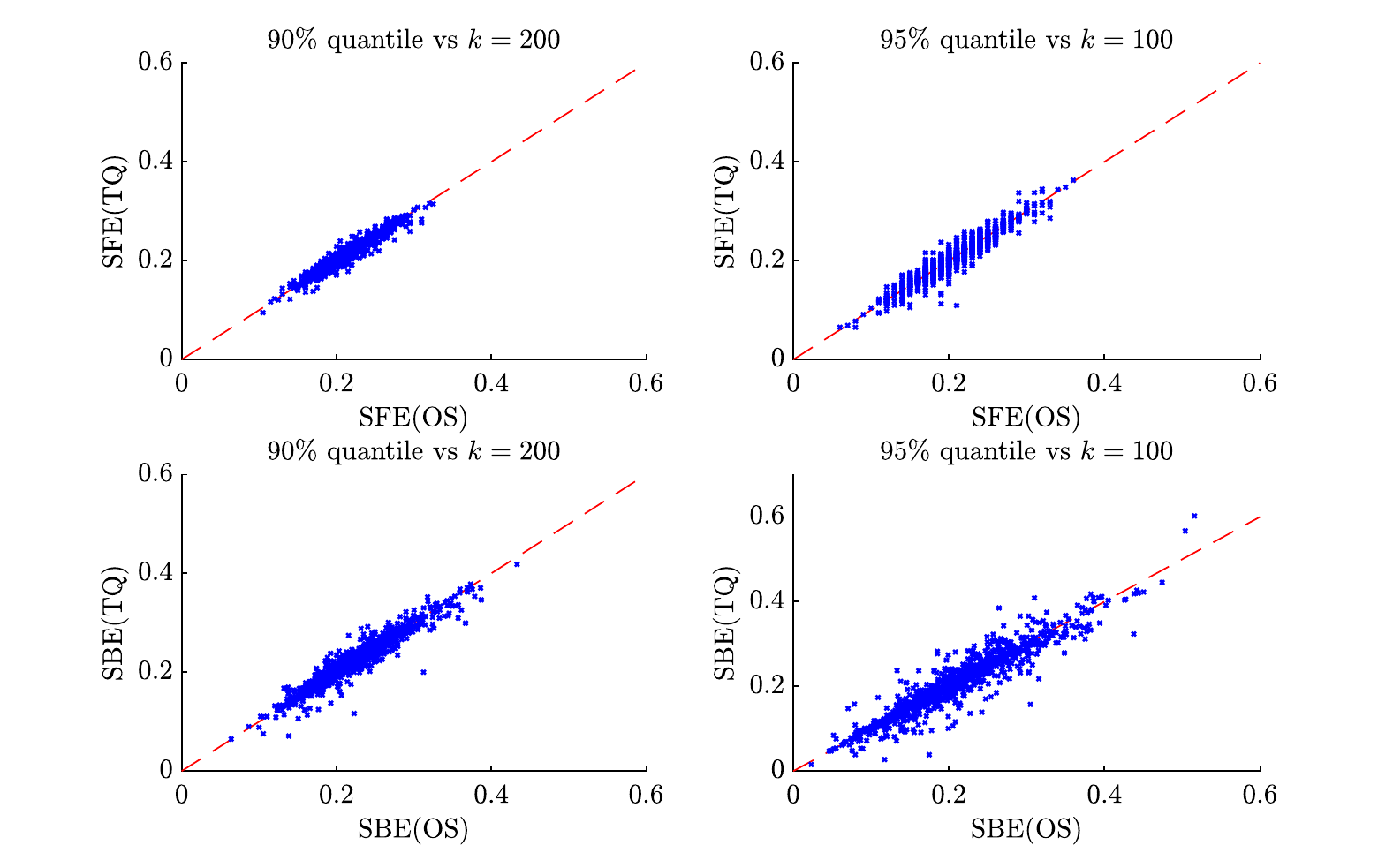}
	\caption{Forward (top) resp.\ backward (bottom) estimator  of $P\{\Theta_1>1/2\}$ in tCopula model with $\rho=0.25$ using TQ as threshold vs estimator using OS as threshold; left: $90\%$ level; right: $95\%$ level. The dashed red line is the main diagonal.}
	\label{fig5}
\end{figure}

\newpage
\section*{Appendix A: Tables}
\setcounter{section}{1}
\renewcommand{\thesection}{\Alph{section}}
\label{app:tables}

\begin{table}[H]
	\small
	\begin{center}
		\begin{tabular}{l | c c}
			& \multicolumn{2}{c}{$\beta$}\\
			model & 0.9 & 0.95 \\ \hline \rule{0pt}{2.2ex}
			nGARCH & 3.3931 ($1.6\times 10^{-4}$) & 4.3695 ($2.7\times 10^{-4}$)\\ \rule{0pt}{2.2ex}
			tGARCH & 2.6349 ($1.5\times 10^{-4}$) & 3.7005 ($2.7\times 10^{-4}$)\\ \rule{0pt}{2.2ex}
			Copula & 2.1318 & 2.7764
		\end{tabular}
		\caption{Approximate theoretical quantiles $F^{\leftarrow}(\beta)$, for $\beta=90\%$ or $95\%$ (with estimated standard deviations in parentheses). In the copula model $F^{\leftarrow}(\beta)$ is completely determined by the marginal $t_4$ distribution.}
		\label{tab1}
	\end{center}
\end{table}

\begin{table}[H]
	\small
	\centering
	\begin{tabular}{l l | c | c}
		model & & $P\{\Theta_1>1\}$ & $P\{\Theta_1>1/2\}$ \\ \hline \rule{0pt}{2.2ex}
		nGARCH & & 0.0549 & 0.2022\\ \hline \rule{0pt}{2.2ex}
		tGARCH & & 0.0450 & 0.1415\\ \hline \rule{0pt}{2.2ex}
		tCopula with & $\rho=0.25$ & 0.0445 & 0.1831\\
		& $\rho=0.50$ & 0.0662 & 0.2623\\
		& $\rho=0.75$ & 0.1096 & 0.3929\\ \hline \rule{0pt}{2.2ex}
		gumCopula with & $\theta=1.2$ & 0.0546 & 0.2145\\
		& $\theta=1.5$ & 0.1031 & 0.3756\\
		& $\theta=2.0$ & 0.1464& 0.4688
	\end{tabular}
	\caption{Probabilities $P\{\Theta_1>1\}$ and $P\{\Theta_1>1/2\}$ in each model.}
	\label{tab2}
\end{table}

\begin{table}[H]
	\scriptsize
	\begin{center}
		\begin{tabular}{l c | c c | c c}
			model & & \multicolumn{2}{c}{nGARCH} & \multicolumn{2}{|c}{tGARCH}\\
			& $\beta$ & 0.9 & 0.95 & 0.9 & 0.95\\ \hline \rule{0pt}{2.2ex}
			$p_{\beta}(1)$ & SFE with OS & 0.0763 ($3\times 10^{-5}$) & 0.0683 ($5\times 10^{-5}$) & 0.0663 ($3\times 10^{-5}$) & 0.0575 ($4\times 10^{-5}$)\\ \hline \rule{0pt}{2.2ex}
			$e_{\beta}(1)$ & SBE with OS & 0.0740 ($3\times 10^{-5}$) & 0.0669 ($5\times 10^{-5}$) & 0.0704 ($3\times 10^{-5}$) & 0.0610 ($4\times 10^{-5}$)\\ \hline \rule{0pt}{2.2ex}
			$p_{\beta}(1/2)$ & SFE with OS & 0.2283 ($4\times 10^{-5}$) & 0.2189 ($6\times 10^{-5}$) & 0.1820 ($4\times 10^{-5}$) & 0.1668 ($6\times 10^{-5}$)\\ \hline \rule{0pt}{2.2ex}
			$e_{\beta}(1/2)$ & SBE with OS & 0.2300 ($7\times 10^{-5}$) & 0.2188 ($10^{-4}$) & 0.1842 ($5\times 10^{-5}$) & 0.1681 ($9\times 10^{-5}$)\\
		\end{tabular}
		\caption{Estimated $p_{\beta}(x)$ and $e_{\beta}(x)$ for $x\in\{1,1/2\}$ (with estimated standard deviations in parentheses).}
		\label{tab3}
	\end{center}
\end{table}

\section*{Appendix B: Stochastic Recurrence Equations}
\setcounter{equation}{0}
\setcounter{section}{2}
\renewcommand{\thesection}{\Alph{section}}
\label{app:ex}

Consider the stochastic recurrence equation
\begin{equation}\label{eq:SRE}
X_t=C_tX_{t-1}+D_t,\quad t\in\Z,
\end{equation}
where $(C_t,D_t)$, $t\in\Z$, is a sequence of iid $[0,\infty)^2$-valued random variables. It is well known  that there exists a unique strictly stationary causal solution, provided $E[\log C_1] <0$ and $E[\log^+ D_1] <\infty$ \citep[Cor.~2.2]{Basrak.2002b}. In addition, assume that the distribution of $C_1$ is not concentrated on a lattice and that there exists $\alpha>0$ such that $E[C_1^\alpha] =1$, $E[ C_1^\alpha \log^+ C_1 ] < \infty$ and $E[ D_1 ^\alpha] < \infty$. Then the time series is regularly varying  with index $\alpha$ \citep[Rem.~2.5, Cor.~2.6]{Basrak.2002b}.

\cite{Drees.2015} have shown that condition (B) holds for suitably chosen (logarithmically increasing) $r_n$, provided $(\log n)^2/n=o(v_n)$ and $v_n=o(1/\log n)$, and that a milder version of condition (C) is satisfied. Here, we will show that our strengthened  condition (C) is fulfilled, too, if we assume in addition that $r_n^{1+2\delta}v_n=O(1)$.

Let $\Pi_{i,j}:=\prod_{l=i}^{j}C_l$ and $V_{i,j}:=\sum_{l=i}^j\Pi_{l+1,j}D_l$. Iterating \eqref{eq:SRE} yields $X_k=\prod_{j+1,k}X_j+V_{j+1,k}$. Define $u_{n,\eps}=(1-\eps)u_n$ and $v_{n,\eps}:=P\{X_0>u_{n,\eps}\}$.

Consider $g:\R\to\R$ with $0\leqslant g(x)\leqslant ax^{\tau}+b$ for some $a,b\geqslant 0$, $\tau\in(0,\alpha)$ and all $x\in\R$. Using the Potter bounds, one can show that the random variables $g(X_0/u)\1\{X_0>u\}/P\{X_0>u\}$, $u>u_0$, are uniformly integrable. Hence, for sufficiently large $u$,
\begin{equation}\label{eq:ineq0}
E[g(X_0/u)\1\{X_0>u\}]\leqslant 2E[g(Y_0)]P\{X_0>u\}.
\end{equation}

Under the above conditions, one has $\rho:=E[C_1^{\xi}]<1$ for any  $\xi\in(0,\alpha)$.
Thus, by the generalized Markov inequality and the independence of the random variables $C_l$,
\begin{equation} \label{eq:ineq1}
P\{\Pi_{j+1,k}>u_{n,\eps}/(2t)\}  \leqslant \rho^{k-j}(2t/u_{n,\eps})^{\xi}.
\end{equation}
Inequality \eqref{eq:ineq0}, $V_{1,k}\leqslant X_k$ and the Potter bounds imply
\begin{align}
E[(X_0/u_{n,\eps})^{\xi}\1\{X_0>u_{n,\eps}/2\}] & \leqslant 2^{1+\alpha}v_{n,\eps}E[Y_0^{\xi}], \label{eq:ineq2}\\
P\{V_{1,k}>u_{n,\eps}/2\} & \leqslant P\{X_k>u_{n,\eps}/2\}\leqslant 2^{1+\alpha}v_{n,\eps} \label{eq:ineq3}
\end{align}
for all $k\in\N$ and sufficiently large $n$.
Moreover, it was shown in  \citep[Example A.3]{Drees.2015} that
there exists a constant $c>0$ such that
\begin{align}
P\{\min\{X_0,X_k\}>u\}& \leqslant cP\{X_0>u\}(P\{X_0>u\}+\rho^k), \label{eq:ineq4}\\
P\{\Pi_{1,k}X_0>u/2,X_0>u\} & \leqslant 2^{\xi+1} \rho^k E[Y_0^\xi] P\{X_0>u\} \label{eq:ineq5}
\end{align}
for all $k\in\N$ and all $u$ sufficiently large.

By independence of $(V_{j+1,k},\Pi_{j+1,k})$ and $(X_0,X_j)$, one has
\begin{align*}
P\{\min\{X_0,X_j,X_k\}>u_{n,\eps}\}&\leqslant P\{\min\{X_0,X_j\}>u_{n,\eps},V_{j+1,k}>u_{n,\eps}/2\}\\
&\qquad+P\{\min\{X_0,X_j\}>u_{n,\eps},\Pi_{j+1,k}X_j>u_{n,\eps}/2\}\\
&=P\{\min\{X_0,X_j\}>u_{n,\eps}\}P\{V_{j+1,k}>u_{n,\eps}/2\}\\
&\qquad+\int_{(u_{n,\eps},\infty)^2}P\{\Pi_{j+1,k}>u_{n,\eps}/(2t)\}\ P^{(X_0,X_j)}(\mathrm{d}(s,t))\\
&\leqslant c2^{1+\alpha}v_{n,\eps}^2(v_{n,\eps}+\rho^j)\\
&\qquad+\rho^{k-j}2^{\xi}E[(X_j/u_{n,\eps})^{\xi}\1\{\min\{X_0,X_j\}>u_{n,\eps}\}]
\end{align*}
where in the last step we have used \eqref{eq:ineq1}, \eqref{eq:ineq3} and \eqref{eq:ineq4}.
Using $(a+b)^\xi\le 2^\xi(a^\xi+b^\xi)$ for all $a,b>0$, \eqref{eq:ineq2}, \eqref{eq:ineq5}, $V_{1,j}\leqslant X_j$ and the independence of $X_0$ and $V_{1,j}$, we can bound the last expected value as follows:
\begin{align*}
E[&(X_j/u_{n,\eps})^{\xi}\1\{\min\{X_0,X_j\}>u_{n,\eps}\}]\\
& \leqslant 2^{\xi}E\Big[\big( (\Pi_{1,j}X_0/u_{n,\eps})^{\xi}+(V_{1,j}/u_{n,\eps})^{\xi}\big)\1\{X_0>u_{n,\eps},\Pi_{1,j}X_0+V_{1,j}> u_{n,\eps}\}\Big]\\
& \leqslant 2^{\xi}E\Big[(\Pi_{1,j}X_0/u_{n,\eps})^{\xi}\1\{X_0>u_{n,\eps}\} + (V_{1,j}/u_{n,\eps})^{\xi}\1\{X_0>u_{n,\eps},V_{1,j}>u_{n,\eps}/2\} \\
& \hspace{3cm}+(V_{1,j}/u_{n,\eps})^{\xi}\1\{X_0>u_{n,\eps},V_{1,j}\leqslant u_{n,\eps}/2,\Pi_{1,j}X_0>u_{n,\eps}/2\}\big)\Big]\\
&\leqslant 2^{\xi}\Big[\rho^j  E\big[(X_0/u_{n,\eps})^{\xi}\1\{X_0>u_{n,\eps}/2\}\big] + v_{n,\eps}E\big[(X_j/u_{n,\eps})^{\xi}\1\{X_j>u_{n,\eps}/2\}\big]\\
& \hspace{3cm}  + 2^{-\xi} P\{X_0>u_{n,\eps},\Pi_{1,j}X_0>u_{n,\eps}/2\}      \Big]\\
&\leqslant 2^{\xi}\Big[(\rho^j+v_{n,\eps}) 2^{1+\alpha} v_{n,\eps}E[Y_0^{\xi}]+2\rho^j v_{n,\eps}E[Y_0^{\xi}]\Big].
\end{align*}
To sum up, we have shown that
\begin{align*}
P\{&\min\{X_0,X_j,X_k\}>u_{n,\eps}\}\\
& \leqslant c2^{1+\alpha}v_{n,\eps}^2(v_{n,\eps}+\rho^j)+\rho^{k-j}2^{2\xi+1}E[Y_0^{\xi}]v_{n,\eps} ((2^\alpha+1)\rho^j+2^{\alpha}v_{n,\eps}).
\end{align*}
This yields
$$
P(\min\{X_j,X_k\}>u_{n,\eps}\ |\ X_0>u_{n,\eps})\leqslant C(v_{n,\eps}^2+v_{n,\eps}(\rho^j+\rho^{k-j})+\rho^k)=:\tilde s_n(j,k)
$$
for a suitable constant $C>0$.
Now, note that $\tilde s_n(j,k)\to C\rho^{k}=:\tilde s_{\infty}(j,k)$ for all $j\leqslant k$, and
\begin{align*}
\sum_{1\leqslant j\leqslant k\leqslant r_n}\tilde s_n(j,k)&=C\Big(\frac{r_n(r_n+1)}{2}v_{n,\eps}^2+v_{n,\eps}\sum_{l=0}^{r_n-1}(r_n-l)\rho^l\\
&\qquad+v_{n,\eps}\sum_{j=1}^{r_n}(r_n-j+1)\rho^j+\sum_{k=1}^{r_n}k\rho^k\Big)\\
&\to C\sum_{k=1}^\infty k\rho^k=\sum_{1\leqslant j\leqslant k<\infty}\tilde s_{\infty}(j,k)<\infty,
\end{align*}
because $r_n v_{n,\eps}\to 0$.  Thus, condition \eqref{eq:snjkdef} is fulfilled.

Next, we verify condition \eqref{eq:snkdef} which is equivalent to
\begin{align}\label{eq:altsnjk}
\lim_{L\to\infty}\limsup_{n\to\infty}\sum_{j=L+1}^{r_n}E\big[\psi(u_{n,\eps}^{-1}X_0)\psi(u_{n,\eps}^{-1}X_k)\ \big|\ X_0>u_{n,\eps}\big]=0
\end{align}
for $\psi(x)=\max\{\log x,\1\{x>1\}\}$. This can be done by direct calculations, but here we give a more elegant proof using general results for Markov processes under the additional assumptions that the time series is aperiodic and irreducible. This is e.g.\ true if $(C_1,D_1)$ is absolutely continuous; see \cite{Buraczewskietal.2016}, Proposition 2.2.1 and Lemma 2.2.2.
According to Lemma 4.3 of \cite{Kulik.2018}, convergence \eqref{eq:altsnjk} holds when conditions (i)--(vi) in Assumption 2.1 of this paper are fulfilled. Since $(X_t)_{t\in\Z}$ is a regularly varying Markov chain, condition (i) and (ii) are trivial. The arguments given in subsection 5.2 of \cite{Mikosch.2013} show that the Lyapunov drift condition (iii) holds with $V(x)=1+|x|^p$ for any $p\in(0,\alpha)$.  The small set condition (iv) follows from subsection 2.2 of this paper in combination with Theorem 9.4.10 and Corollary 14.1.6 of \cite{Doucetal.2018}.
With the above choice of $V$, condition (v) is obvious. Using \eqref{eq:ineq0} and the Potter bounds one may conclude, for all $s>0$,
\begin{align*}
E[V(X_0)\1\{X_0>su_{n,\eps}\}]&= E\big[X_0^p\1\{X_0>su_{n,\eps}\}\big]+P\{X_0>su_{n,\eps}\}\\
&\leqslant \big(2(su_{n,\eps})^pE[Y_0^p]+1\big)P\{X_0>su_{n,\eps}\}\\
&\leqslant C\big((su_{n,\eps})^p+1\big)s^{-\alpha+\eta}v_{n,\eps}
\end{align*}
for some  $\eta$ with arbitrarily small modulus ($\eta$ is positive when $s>1$ and negative for $s\in(0,1)$) and sufficiently large $C>0$, $n\in\N$. Thus
\begin{align*}
\limsup_{n\to\infty}\frac{1}{u_{n,\eps}^{p}v_{n,\eps}}E[V(X_0)\1\{X_0>su_{n,\eps}\}] &\leqslant \limsup_{n\to\infty} C\big(s^p+u_{n,\eps}^{-p}\big)s^{-\alpha+\eta}\\
&=Cs^{-\alpha+p+\eta}<\infty
\end{align*}
such that condition (vi) is also satisfied. Hence, condition \eqref{eq:snkdef} is fulfilled.

It remains to prove \eqref{eq:psibdd}. Since to all $p>0$ there exists $c_p>0$ such that\linebreak
$(\log^+ x)^{1+\delta}\leqslant c_p x^p \1\{x>1\}$, it suffices to show that, for some $p,\tilde p>0$,
\begin{equation} \label{eq:powerbdd}
\sum_{k=1}^{r_n} \bigg( E\Big[ \Big(\frac{X_k}{u_{n,\eps}}\Big)^p \Big(\frac{X_0}{u_{n,\eps}}\Big)^{\tilde p}\1\{X_k>u_{n,\eps}\} \ \Big|\ X_0>u_{n,\eps}\Big]\bigg)^{1/(1+\delta)}
\end{equation}
is bounded. By induction, one can conclude from the drift condition that to all $p\in(0,\alpha)$ there exist $\beta\in(0,1)$ and $b>0$ such that $E[X_k^p\mid X_0=y]\leqslant \beta^k y^p+b/(1-\beta)$ \citep[Prop.\ 14.1.8]{Doucetal.2018}. Hence
\begin{align*}
E\Big[ & \Big(\frac{X_k}{u_{n,\eps}}\Big)^p \Big(\frac{X_0}{u_{n,\eps}}\Big)^{\tilde p} \1\{X_k>u_{n,\eps}\} \ \Big|\  X_0>u_{n,\eps}\Big]\\
& \leqslant  v_{n,\eps}^{-1}\int_{u_{n,\eps}}^\infty u_{n,\eps}^{-(p+\tilde p)}E[X_k^p\mid X_0=y] y^{\tilde p}\ P^{X_0}(\mathrm{d}y)\\
& \leqslant \beta^k E\Big[\Big(\frac{X_0}{u_{n,\eps}}\Big)^{p+\tilde p}\ \Big|\ X_0>u_{n,\eps}\Big] + \frac{b}{1-\beta}u_{n,\eps}^{-p}E\Big[\Big(\frac{X_0}{u_{n,\eps}}\Big)^{\tilde p}\ \Big|\  X_0>u_{n,\eps}\Big]\\
& \leqslant 2\beta^k E[Y_0^{p+\tilde p}] + \frac{2b}{1-\beta}u_{n,\eps}^{-p}E[Y_0^{\tilde p}]
\end{align*}
for sufficiently large $n$, by \eqref{eq:ineq0}, provided $p+\tilde p<\alpha$. Choose $p\in(\alpha(1+\delta)/(1+2\delta),\alpha)$ and $\tilde p>0$ sufficiently small such that $p+\tilde p<\alpha$. Then \eqref{eq:powerbdd} can be bounded by a multiple of $\sum_{k=1}^{r_n} \beta^{k/(1+\delta)}+r_n u_{n,\eps}^{-p/(1+\delta)}$. By the regular variation of $X_0$ with index $\alpha$ and the choice of $p$, one has $u_{n,\eps}^{-p/(1+\delta)}=o(v_{n,\eps}^{1/(1+2\delta)})$. Thus, \eqref{eq:powerbdd} is bounded if $r_n^{1+2\delta}v_{n,\eps}$ is bounded.

\section*{Appendix C: Proofs}
\setcounter{equation}{0}
\setcounter{section}{3}
\setcounter{Thm}{0}
\renewcommand{\thesection}{\Alph{section}}
\begin{Lem}\label{lemma:CimpliesD}
	If condition \eqref{eq:snjkdef} from (C) holds, then
	\begin{equation*}
	E\bigg[\bigg(\sum_{i=1}^{r_n}\1\{X_i>(1-\eps)u_n\}\bigg)^3\bigg]=O(r_nv_n).
	\end{equation*}
\end{Lem}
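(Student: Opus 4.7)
Abbreviate $u_{n,\eps} := (1-\eps)u_n$ and $v_{n,\eps} := P\{X_0>u_{n,\eps}\}$; by regular variation $v_{n,\eps} = O(v_n)$, so it suffices to bound the expectation by $O(r_n v_{n,\eps})$. Expanding the cube,
$$
E\Bigl[\Bigl(\sum_{i=1}^{r_n}\1\{X_i>u_{n,\eps}\}\Bigr)^3\Bigr]
= \sum_{i,j,k=1}^{r_n} P\{X_i>u_{n,\eps},\,X_j>u_{n,\eps},\,X_k>u_{n,\eps}\},
$$
and the plan is to split this triple sum according to how many of the indices $i,j,k$ coincide, treating the three resulting pieces $T_1$ (all equal), $T_2$ (exactly two equal) and $T_3$ (all distinct) separately.

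The diagonal $T_1$ contributes exactly $r_n v_{n,\eps}$. For $T_2$, stationarity reduces each summand to $P\{X_0>u_{n,\eps},\,X_l>u_{n,\eps}\}$ for some lag $l\geqslant 1$; counting the number of triples producing a given lag and pulling out the factor $v_{n,\eps}$, I would bound
$$
T_2 \leqslant 6 r_n v_{n,\eps} \sum_{l=1}^{r_n} P(X_l>u_{n,\eps}\mid X_0>u_{n,\eps})
\leqslant 6 r_n v_{n,\eps} \sum_{l=1}^{r_n}\tilde s_n(l,l),
$$
the last sum being bounded since it is dominated by $\sum_{1\leqslant j\leqslant k\leqslant r_n}\tilde s_n(j,k) = O(1)$ from condition (C).

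For $T_3$ I would symmetrize, assume $i<j<k$, and reparametrize by $a := j-i$, $b := k-i$ with $1\leqslant a<b$. By stationarity,
$$
T_3 = 6\sum_{1\leqslant i<j<k\leqslant r_n} P\{X_0>u_{n,\eps},\,X_{j-i}>u_{n,\eps},\,X_{k-i}>u_{n,\eps}\}
\leqslant 6 v_{n,\eps}\!\!\sum_{1\leqslant i<j<k\leqslant r_n}\!\!\tilde s_n(j-i,k-i).
$$
For each fixed pair $(a,b)$ there are at most $r_n$ admissible $i$'s, so $T_3 \leqslant 6r_n v_{n,\eps} \sum_{1\leqslant a<b\leqslant r_n}\tilde s_n(a,b) = O(r_n v_{n,\eps})$, again by condition (C). Collecting $T_1 + T_2 + T_3 = O(r_n v_n)$ gives the claim.

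I do not expect any genuine obstacle here: the work is purely combinatorial bookkeeping, and the only thing to be careful about is handling the diagonal $j=k$ in the pair sums (used in $T_2$) correctly, since the hypothesis \eqref{eq:snjkdef} is stated for $1\leqslant j\leqslant k\leqslant r_n$ and thus does allow $j=k$.
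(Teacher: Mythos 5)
Your proof is correct and follows essentially the same route as the paper's: expand the cube, use stationarity to shift the minimal index to zero, count ordered triples mapping to a given lag pattern (at most $6r_n$), and invoke the summability of $\tilde s_n(j,k)$ from condition (C). The only cosmetic difference is that you split the off-diagonal terms into ``exactly two equal'' ($T_2$) and ``all distinct'' ($T_3$), whereas the paper absorbs both at once into the sum $\sum_{1\leqslant j\leqslant k\leqslant r_n-1}\tilde s_n(j,k)$ by observing that $P\{\min\{X_0,X_l\}>u_{n,\eps}\}=P\{\min\{X_0,X_l,X_l\}>u_{n,\eps}\}$, so the two-equal case is just the $j=k$ diagonal of the same sum; both bookkeepings give the same bound.
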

\begin{proof}
	Let $v_{n,\eps}=P\{X_0>(1-\eps)u_n\}$. By regular variation and stationarity of $(X_t)_{t\in\Z}$
	\begin{align*}
	&E\bigg[\bigg(\sum_{i=1}^{r_n}\1\{X_i>(1-\eps)u_n\}\bigg)^{3}\bigg]\\
	&\qquad=\sum_{i,j,k=1}^{r_n}P\{\min\{X_i,X_j,X_k\}>(1-\eps)u_n\}\\
	&\qquad \leqslant r_nv_{n,\eps}+6\sum_{k=1}^{r_n-1}\sum_{j=1}^{k}\Big(1-\frac{k}{r_n}\Big) P\{\min\{X_0,X_j,X_k\}>(1-\eps)u_n\}\\
	&\qquad\leqslant r_nv_{n,\eps}+6r_nv_{n,\eps}\sum_{1\leqslant j\leqslant k\leqslant r_n-1}\tilde{s}_n(j,k)\\
	&\qquad=O(r_nv_n).
	\end{align*}
\end{proof}
Taking up the notation of \cite{Drees.2010}, we consider the empirical process $\tilde{Z}_n$ defined by
\begin{equation*}
\tilde{Z}_n(\psi):=(nv_{n})^{-1/2}\sum_{i=1}^n(\psi(X_{n,i})-E[\psi(X_{n,i})]),
\end{equation*}
where $\psi$ is one of the functions $\phi_{0,s},\phi_{1,s},\phi_{2,x,s}^t$ or $\phi_{3,y,s}^t$ (defined in \eqref{eq:phi0def}--\eqref{eq:phi3def}). The asymptotic normality of the Hill estimator and our main Theorem \ref{theo:asnormest} can be derived from the following result about the process convergence of $\tilde Z_n$.
\begin{Prop}\label{prop1}
	Let $(X_t)_{t\in\Z}$ be a stationary, regularly varying process with index $\alpha>0$. Suppose that the conditions (A($x_0$)), (B) and (C) are fulfilled for some $x_0\geqslant 0$. Then, for all $y_0\in[x_0,\infty)\cap(0,\infty)$, the sequence of processes $(\tilde{Z}_n(\psi))_{\psi\in\Phi}$ with index set
	\begin{equation*}
	\Phi:=\left\{\phi_{0,s},\phi_{1,s},\phi_{2,x,s}^t,\phi_{3,y,s}^t\ |\ s\in[1-\varepsilon,1+\varepsilon],x\geqslant x_0,y\geqslant y_0, |t|\in\{1,\dots,\tilde{t}\}\right\}
	\end{equation*}
	converges weakly in $l^{\infty}(\Phi)$ to a centered Gaussian process $Z$ with covariance function given by
	\begin{align} \label{eq:cov_emp_pr}
	\cov(Z(\psi_1),Z(\psi_2))&= E[\psi_1(\bar{Y}_0)\psi_2(\bar{Y}_0)]+ \sum_{k=1}^{\infty}\left(E[\psi_1(\bar{Y}_0)\psi_2(\bar{Y}_k)]+ E[\psi_1(\bar{Y}_k)\psi_2(\bar{Y}_0)]\right)\\
	&= \sum_{k=-\infty}^{\infty}E[\psi_1(\bar{Y}_0)\psi_2(\bar{Y}_k)] \nonumber
	\end{align}
	for $\psi_1,\psi_2\in\Phi$, where $\bar{Y}_k:=(Y_{k-\tilde{t}},\dots,Y_{k+\tilde{t}})\1\{Y_k>1\}.$
\end{Prop}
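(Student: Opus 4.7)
The plan is to invoke the general functional central limit theorem for empirical processes of generalized tail-array sums based on $\beta$-mixing time series proved in \cite{Drees.2010}. That result reduces the claim to checking (i) the block-size and mixing conditions supplied by (B), (ii) convergence of finite-dimensional covariances, (iii) a Lindeberg-type moment bound, and (iv) an asymptotic equicontinuity statement phrased in terms of bracketing numbers of $\Phi$ with respect to a semimetric derived from the limiting covariance. Parts (i)--(iii) closely parallel the analysis of \cite{Davis.2018}; the genuinely new ingredient is part (iv), because $\Phi$ now carries the additional parameter $s\in[1-\eps,1+\eps]$, so the families $\{\phi_{2,x,s}^t\}$ and $\{\phi_{3,y,s}^t\}$ are two-parameter classes rather than the monotone one-parameter families treated there.

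For finite-dimensional convergence I would start from
\begin{equation*}
\cov\bigl(\tilde Z_n(\psi_1),\tilde Z_n(\psi_2)\bigr)=\frac{1}{v_n}\sum_{|k|<n}\Bigl(1-\frac{|k|}{n}\Bigr)\cov\bigl(\psi_1(X_{n,0}),\psi_2(X_{n,k})\bigr).
\end{equation*}
The contribution of lags $|k|>r_n$ is negligible by the $\beta$-mixing bound in (B), while for $|k|\le r_n$ regular variation of the finite-dimensional distributions of $(X_t)_{t\in\Z}$ together with the dominating sequences $s_n(k)$ and $\tilde s_n(j,k)$ from (C) identify the limit as $\sum_{k\in\Z} E[\psi_1(\bar Y_0)\psi_2(\bar Y_k)]$, matching \eqref{eq:cov_emp_pr}. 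The Lindeberg condition reduces to showing that $v_n^{-1}E\bigl[\psi^2(X_{n,0})\1\{|\psi(X_{n,0})|>(nv_n)^{1/2}\eta\}\bigr]\to 0$ for every $\eta>0$; this follows from the $(1+\delta)$-moment bound \eqref{eq:psibdd} for the unbounded $\phi_0$ and $\phi_3$, and by plain truncation for the bounded $\phi_1$ and $\phi_2$. Combined with Lemma \ref{lemma:CimpliesD} and a block CLT for $\beta$-mixing triangular arrays, this yields joint normality of every finite-dimensional marginal of $(\tilde Z_n(\psi))_{\psi\in\Phi}$.

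The hard part is the bracketing-entropy bound, which I would establish separately for each of the four sub-families of $\Phi$ and then combine by intersection. The one-parameter classes $\{\phi_{0,s}\}$ and $\{\phi_{1,s}\}$ are monotone in $s$, so a grid of mesh $\eta$ on $[1-\eps,1+\eps]$ produces $O(\eta^{-1})$ brackets of $L^2$-size $O(\sqrt\eta)$ with respect to the conditional law given $X_0>(1-\eps)u_n$. For $\phi_{2,x,s}^t$ I would fix an $\eta$-grid $s_0<\dots<s_M$ in $s$ and, on each slice $s=s_j$, construct a grid in $x\ge x_0$ by inverting the continuous cdf $F^{(\Theta_t)}$ supplied by (A($x_0$)); monotonicity in $x$ yields the one-dimensional brackets, while the increment across adjacent slices is dominated by $\1\{s_{j-1}<z_0\le s_j\}$, whose $L^2$-mass is of order $\sqrt\eta$ by regular variation. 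The most delicate case is $\phi_{3,y,s}^t$, because the factor $(z_{-t}/z_0)^\alpha$ is unbounded; here I would split the domain into $\{z_{-t}/z_0\le K\}$ and $\{z_{-t}/z_0>K\}$, treat the former as a rescaled copy of the $\phi_2$-analysis at cost $K^\alpha$, and bound the latter uniformly in $n$ via the $(1+\delta)$-moment condition in (C), so that its contribution vanishes as $K\to\infty$. Letting $K$ grow slowly and choosing the mesh $\eta$ polynomial in $v_n$ makes the resulting bracketing integral summable, so the tightness criterion of \cite{Drees.2010} applies and yields weak convergence of $\tilde Z_n$ in $l^\infty(\Phi)$ to a tight centered Gaussian process with covariance \eqref{eq:cov_emp_pr}.
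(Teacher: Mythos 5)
Your overall architecture — reducing the claim to the functional CLT of \cite{Drees.2010} for tail-array sums of $\beta$-mixing series, treating finite-dimensional convergence and the Lindeberg condition much as in \cite{Drees.2015} and \cite{Davis.2018}, and isolating the entropy condition as the genuinely new piece — matches the paper. The finite-dimensional and moment parts of your argument are essentially correct and not the issue. However, the entropy step, which you yourself identify as the hard part, contains two genuine gaps.

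First, a misreading that affects the whole plan for $\Phi_3^t$: you say the factor $(z_{-t}/z_0)^{\alpha}$ in $\phi_{3,y,s}^t$ is unbounded and propose truncating at $z_{-t}/z_0\leqslant K$ and sending $K\to\infty$. But the accompanying indicator forces $z_0/z_{-t}>y\geqslant y_0>0$, so on the support of $\phi_{3,y,s}^t$ one has $(z_{-t}/z_0)^{\alpha}<y_0^{-\alpha}$. Each $\phi_{3,y,s}^t$ is therefore \emph{uniformly bounded} by $y_0^{-\alpha}$, and the whole $K$-truncation is vacuous. The unboundedness that the paper actually has to fight is not in the individual functions but in the \emph{block sums} $\sum_{i\in I_j}\phi_{3,y,s}^t(X_{n,i})$, which can be as large as $y_0^{-\alpha}$ times the cluster size.

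Second, and more fundamentally, you propose to control \emph{bracketing} entropy with respect to the marginal $L^2$ law given $X_0>(1-\eps)u_n$, whereas the tightness condition (D6) that the paper verifies is a \emph{random covering number} condition with respect to the semimetric
\begin{equation*}
d_n(\psi_1,\psi_2):=\Bigl(\frac{1}{nv_n}\sum_{j=1}^{m_n}\Bigl(\sum_{i=1}^{r_n}\bigl(\psi_1(T^*_{n,j,i})-\psi_2(T^*_{n,j,i})\bigr)\Bigr)^{2}\Bigr)^{1/2},
\end{equation*}
built from block sums of length $r_n$. Controlling the marginal $L^2$ distance of $\psi_1-\psi_2$ by a mesh of size $\sqrt{\eta}$ does not control $d_n$: a single block can contain up to $r_n$ exceedances, and those contributions add up coherently inside a block before being squared. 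This clustering is exactly why the entropy step is nontrivial for heavy-tailed serially dependent data, and your argument passes over it. The paper's proof handles it by (i) bounding the covering number of the class of sums $f_{3,y,s}^{(r,t)}=\sum_{i=1}^r\phi_{3,y,s}^t(z_i)$ of $r$ terms via VC theory (the subgraph class has VC index $O(\log r)$, giving a covering bound polynomial in $r$ and of the form $\delta^{-c\log r}$), (ii) introducing the random truncation level $R_{n,\eps_0}$ so that blocks with more than $R_{n,\eps_0}$ non-zero entries contribute at most $\eps_0^2/2$ to $d_n^2$, and (iii) showing via Lemma \ref{lemma:CimpliesD} and Markov's inequality that $R_{n,\eps_0}\leqslant M\eps_0^{-3}$ with high probability. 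None of this machinery — neither the VC bound on block-sum classes nor the $R_{n,\eps_0}$ device — has a counterpart in your proposal, and replacing it by a per-observation bracketing grid with a mesh ``polynomial in $v_n$'' does not close the gap. To repair the argument along bracketing lines you would at minimum have to build brackets for the block-sum functions $(z_1,\dots,z_{r_n})\mapsto\sum_{i=1}^{r_n}\psi(z_i)$ and control their $L^2$ sizes under the law of a whole block, which again forces you to confront cluster sizes exactly as the paper does.
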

\begin{proof}
	Weak convergence of the finite-dimensional distributions of $(\tilde{Z}_n(\psi))_{\psi\in\Phi}$ can be established as in the proof of Proposition B.1 of \cite{Drees.2015}. Note that here the threshold $(1-\eps)u_n$ is used instead of $u_n$, while the components of $X_{n,i}$ are standardized with $u_n$. Moreover, we standardize the process using $v_n=P\{X_0>u_n\}$ instead of $P\{X_0>(1-\eps)u_n\}= (1-\eps)^{-\alpha}v_n(1+o(1))$. Therefore, we obtain as limiting covariance function
	\begin{align*} \cov&(Z(\psi_1),Z(\psi_2))= (1-\eps)^{-\alpha}\Big(E[\psi_1((1-\eps)\bar{Y}_0)\psi_2((1-\eps)\bar{Y}_0)] \\ & +\sum_{k=1}^{\infty}\left(E[\psi_1((1-\eps)\bar{Y}_0)\psi_2((1-\eps)\bar{Y}_k)] +E[\psi_1((1-\eps)\bar{Y}_k)\psi_2((1-\eps)\bar{Y}_0)]\right)\Big).
	\end{align*}
	Now recall that $ Y_k=Y_0 \Theta_k$ for a Pareto random variable $Y_0$ independent of the spectral process. Since $P\{(1-\eps)Y_0>1\}=(1-\eps)^\alpha$, $Y_0$ has the same distribution as $(1-\eps)Y_0$ conditionally on $\{(1-\eps)Y_0>1\}$, and $\psi_i(y_{-\tilde t},\ldots, y_{\tilde t})$ vanishes if $y_0\leqslant 1$, one has
	\begin{align*}
	E\big[\psi_1((1-\eps)\bar Y_0)\psi_2((1-\eps)\bar Y_k)\big]& = E\big[\psi_1((1-\eps)Y_0(\Theta_t)_{|t|\leqslant \tilde t})\psi_2((1-\eps)Y_0(\Theta_{k+t})_{|t|\leqslant \tilde t})\big]\\
	& = (1-\eps)^\alpha E\big[\psi_1(Y_0(\Theta_t)_{|t|\leqslant \tilde t})\psi_2(Y_0(\Theta_{k+t})_{|t|\leqslant \tilde t})\big]\\
	&= (1-\eps)^\alpha E\big[\psi_1(\bar Y_0)\psi_2(\bar Y_k)\big].
	\end{align*}
	Now, the asserted representation \eqref{eq:cov_emp_pr} is obvious. The second representation can be similarly concluded from the equation
	$$ E\Big[ \sum_{i=1}^{r_n} \psi_1(X_{n,i})\sum_{j=1}^{r_n} \psi_2(X_{n,j})\Big] = r_n\sum_{k=1-r_n}^{r_n-1} \Big( 1-\frac{|k|}{r_n}\Big) E\big[\psi_1(X_{n,0})\psi_2(X_{n,k})\big].
	$$

	To prove asymptotic equicontinuity of the processes (and thus their weak convergence), we apply Theorem 2.10 of \cite{Drees.2010}. To this end, we must verify the conditions (D1), (D2'), (D3), (D5) and (D6) of this paper. Except for condition (D6), all assumptions of the theorem can be established by similar arguments as in the proof of Proposition B.1 of \cite{Drees.2015}.
	
	It remains to prove that the following condition holds:\\[0.5ex]	
	{\bf (D6)} \parbox[t]{10cm}{\begin{equation*}
		\lim_{\delta\downarrow 0} \limsup_{n\to\infty}P^*\Big\{ \int_0^\delta
		\sqrt{ \log N(\eps_0,\Phi,d_n)}\, d\eps_0 > \tau \Big\} = 0 \quad
		\forall \tau>0.
		\end{equation*}}
	
	Here $P^*$ denotes he outer probability, and the (random) covering number \linebreak $N(\eps_0,\Phi,d_n)$ is the minimum number of balls with radius $\eps_0$
	w.r.t.\ $$ d_n(\psi_1,\psi_2) := \bigg( \frac 1{nv_n} \sum_{j=1}^{m_n}
	\Big(\sum_{i=1}^{r_n}\psi_1(T_{n,j,i}^*)-\psi_2(T_{n,j,i}^*\Big)^2\bigg)^{1/2},
	$$
	needed to cover $\Phi$, $T_{n,j}^*=(T_{n,j,i}^*)_{1\le i\le r_n}$ denote iid copies of $(X_{n,i})_{1\le i\le r_n}$ (defined in \eqref{eq:Xnidef}) and $m_n:=\lfloor n/r_n\rfloor$.
	
	Note that this condition can be verified separately for the sets of functions $\{\phi_{0,s}\mid{s\in[1-\eps,1+\eps]}\}$, $\{\phi_{1,s}\mid{s\in[1-\eps,1+\eps]}\}$, $\Phi_2^t:=\{\phi_{2,x,s}^t\, |\, x\in[x_0,\infty),s\in[1-\eps,1+\eps]\}$ and $\Phi_3^t := \{\phi_{3,y,s}^t\mid y\in[y_0,\infty),s\in[1-\eps,1+\eps]\}$ for $|t|\in\{1,\ldots,\tilde t\}$. For the former two function classes, (D6) easily follows from the linear order of the functions in $s$; see \citep[Example 3.8]{Drees.2010} for details. It is much more challenging to verify (D6) for the remaining two families.
	We give details of the proof only for the class $\Phi_3^t$, but the arguments readily carry over to $\Phi_2^t$.
	
	Since  this most crucial part of the proof is rather involved, we first give a brief outline. In a first step, we use VC theory to bound the covering number of the family of functions
	$f_{3,y,s}^{(r,t)}:\big([0,\infty)^{(2\tilde{t}+1)}\big)^r\to\R$,
	\begin{equation*}	f_{3,y,s}^{(r,t)}(z_1,\dots,z_r):=\sum_{i=1}^r\phi_{3,y,s}^t(z_i)=\sum_{i=1}^r \Big(\frac{z_{i,-t}}{z_{i,0}}\Big)^{\alpha}\1\{z_i\in V_{y,s}^t\},
	\end{equation*}
	with $z_i=(z_{i,-\tilde{t}},\dots,z_{i,\tilde{t}})$ for fixed $r$ (see \eqref{eq:F3rcover}). Then we show that by choosing $r$ equal to the (random) minimum number (denoted by $R_{n,\eps_0}$) such that the maximal possible contribution of clusters of length larger than $r$ to the distance $d_n$ is less than $\eps_0/2$, one can bound the covering number $N(\eps_0,\Phi_3^t,d_n)$ by some function of $R_{n,\eps_0}$ (cf.\ \eqref{eq:covnumbound}). Finally, Condition (D6) follows if one shows that, with large probability, $R_{n,\eps_0}$ grows only polynomially in $\eps_0^{-1}$.
	
	First note that
	the function $\phi_{3,y,s}^t$ does not vanish on the set $V_{y,s}^t:=$ $\{(x_{-\tilde{t}},\dots,x_{\tilde{t}})\in[0,\infty)^{2\tilde{t}+1}\ |\ x_0/x_{-t}>y,\ x_{-t}>0,\ x_0>s\}$. We now show that the subgraphs $M_{y,s}^{(r,t)}:=\big\{(\lambda,z_1,\dots,z_r)\in\R\times([0,\infty)^{(2\tilde{t}+1)})^r\ |\ $ $\lambda<f_{3,y,s}^{(r,t)}(z_1,\dots,z_r)\big\}$ of $f_{3,y,s}^{(r,t)}$ form a VC class. To this end, consider an arbitrary set $A=\{(\lambda^{(l)},x_1^{(l)},\dots,x_r^{(l)})\ |\ 1\leqslant l\leqslant m\}\subset\R\times[0,\infty)^{r(2\tilde{t}+1)}$ of $m$ points. For $1\leqslant i\leqslant r$, $1\leqslant l\leqslant m$, define lines \linebreak
	$\{(x_{i,0}^{(l)}/x_{i,-t}^{(l)},s)\ |\ s\in[1-\eps,1+\eps]\}$ and $\{(y,x_{i,0}^{(l)})\ |\ y\in[y_0,\infty)\}$, that divide the set $[y_0,\infty)\times[1-\eps,1+\eps]$ into at most $(mr+1)^2$ rectangles. If $(y,s),(\tilde{y},\tilde{s})\in[y_0,\infty)\times[1-\eps,1+\eps]$ belong to the same rectangle then the symmetric difference $V_{y,s}^t\vartriangle V_{\tilde{y},\tilde{s}}^t$ does not contain any of the points $x_i^{(l)}$, $1\leqslant i\leqslant r$, $1\leqslant l\leqslant m$. Hence, the equality $f_{3,y,s}^{(r,t)}(x_1^{(l)},\dots,x_r^{(l)})=f_{3,\tilde{y},\tilde{s}}^{(r,t)}(x_1^{(l)},\dots,x_r^{(l)})$ holds for all $1\leqslant l\leqslant m$, and the intersections $A\cap M_{y,s}^t$ and $A\cap M_{\tilde{y},\tilde{s}}^t$ are identical. Thus, $(M_{y,s}^t)_{y\in[y_0,\infty),s\in[1-\eps,1+\eps]}$ can pick at most $(mr+1)^2$ different subset of $A$. If $m>4\log r$ and $r$ is sufficiently large then
	$m-2\log_2 m>3 \log r>\log_2(4r^2)$ which implies $2^m>4m^2r^2\geqslant (mr+1)^2$. Hence, the family of subgraphs $(M_{y,s}^t)_{y\in[y_0,\infty),s\in[1-\eps,1+\eps]}$ cannot shatter $A$, which shows that the VC-index of $\mathcal{F}_3^{(r,t)}:=\{f_{3,y,s}^{(r,t)}\ |\ y\in[y_0,\infty),s\in[1-\eps,1+\eps]\}$ is less than $4\log r$ if $r$ is sufficiently large. By Theorem 2.6.7 of \cite{Vaart.1996}, we have
	\begin{equation} \label{eq:F3rcover}
	N\Big(\delta\Big(\int G_r^2\ \mathrm{d}Q\Big)^{1/2},\mathcal{F}_3^{(r,t)},L_2(Q)\Big)\leqslant K_1r^{16}\delta^{-K_2\log r}
	\end{equation}
	for all small $\delta>0$, all probability measures $Q$ on $([0,\infty)^{(2\tilde{t}+1)})^r$  such that $\int G_r^2\ \mathrm{d}Q>0$, and suitable universal constants $K_1,K_2>0$ with $G_r=f_{3,y_0,1-\eps}^{(r,t)}$ denoting the envelope function of $\F_3^{(r,t)}$.
	
	In the second step we show that the terms pertaining to blocks with more than $r$ non-vanishing summands do not contribute too much to $d_n(\phi_{3,y,s}^t,\phi_{3,\tilde y,\tilde s}^t)$, if we let $r$ tend to infinity in a suitable way.
	
	Denote the number of independent blocks with at most $r$ non-zero entries by $N_{n,r}:=\sum_{j=1}^{m_n}\1\{H(T_{n,j}^*)\leqslant r\}$ with $H(z)=\sum_{i=1}^{r_n}\1\{z_{i,0}>1-\eps\}$, $z\in([0,\infty)^{(2\tilde{t}+1)})^{r_n}$.
	For these blocks, define vectors $\tilde T_{n,j}^*$ of length $r$ which contain all non-zero values of $T_{n,j}^*$, augmented by $r-H(T_{n,j}^*)$ zeros. Let
	\begin{equation*}
	Q_{n,r}:=\frac{1}{N_{n,r}}\sum_{j=1}^{m_n}\1\{H(T_{n,j}^*)\leqslant r\}\eps_{\tilde {T}_{n,j}^*},
	\end{equation*}
	with $\eps_T$ the Dirac measure with mass 1 in $T$.
	We can bound the squared distance between $\phi_{3,y,s}^t$ and $\phi_{3,\tilde y,\tilde s}^t$ as follows:
	
	\begin{align*}
	d_n^2(\phi_{3,y,s}^t,\phi_{3,\tilde y,\tilde s}^t)&=\frac{1}{nv_n}\sum_{j=1}^{m_n}\Big( \sum_{i=1}^{r_n}\phi_{3,y,s}^t(T_{n,j,i}^*)-\phi_{3,\tilde y,\tilde s}^t(T_{n,j,i}^*)\Big)^2\\
	&\leqslant \frac{N_{n,r}}{nv_n}\int(f_{3,y,s}^{(r,t)}-f_{3,\tilde y,\tilde s}^{(r,t)})^2\ \mathrm{d}Q_{n,r}+\frac{1}{nv_n}\sum_{j=1}^{m_n}G_{r_n}^2(T_{n,j}^*)\1\{H(T_{n,j}^*)>r\}
	\end{align*}
	for  all $r\in\N$. In particular,
	\begin{equation*}
	d_n^2(\phi_{3,y,s}^t,\phi_{3,\tilde y,\tilde s}^t)\leqslant \frac{N_{n,R_{n,\eps_0}}}{nv_n}\int (f_{3,y,s}^{(R_{n,\eps_0},t)}-f_{3,\tilde y,\tilde s}^{(R_{n,\eps_0},t)})^2\ \mathrm{d}Q_{n,R_{n,\eps_0}}+\frac{\eps_0^2}{2}
	\end{equation*}
	with
	\begin{equation*}
	R_{n,\eps_0}:=\min\Big\{r\in\N\ \Big|\ \frac{1}{nv_n}\sum_{j=1}^{m_n}G_{r_n}^2(T_{n,j}^*)\1\{H(T_{n,j}^*)>r\}<\frac{\eps_0^2}{2}\Big\}.
	\end{equation*}
	If $\int (f_{3,y,s}^{(R_{n,\eps_0},t)}-f_{3,\tilde y,\tilde s}^{(R_{n,\eps_0},t)})^2\ \mathrm{d}Q_{n,R_{n,\eps_0}}\leqslant nv_n\eps_0^2/(2N_{n,R_{n,\eps_0}})=:\eps_1^2$ then $d_n^2(\phi_{3,y,s}^t,\phi_{3,\tilde y,\tilde s}^t)\leqslant\eps_0^2$; that is, for vectors $(y,s),(\tilde y,\tilde s)$ such that $f_{3,y,s}^{(R_{n,\eps_0},t)}$ and $f_{3,\tilde y,\tilde s}^{(R_{n,\eps_0},t)}$ belong to some $\eps_1$-ball w.r.t.\ $L^2(Q_{n,R_{n,\eps_0}})$, the corresponding functions $\phi_{3,y,s}^t$ and $\phi_{3,\tilde y,\tilde s}^t$ belong to the same $\eps_0$-ball w.r.t.\ $d_n$. This implies $N(\eps_0,\Phi_3^t,d_n)\leqslant N(\eps_1,\mathcal{F}_3^{(R_{n,\eps_0},t)},L_2(Q_{n,R_{n,\eps_0}}))$.
	Note that
	\begin{equation*}
	\int G_{R_{n,\eps_0}}^2\ \mathrm{d}Q_{n,R_{n,\eps_0}} \leqslant \frac{y_0^{-2\alpha} R_{n,\eps_0}^2}{N_{n,R_{n,\eps_0}}} \sum_{j=1}^{m_n}\1\{H(T_{n,j}^*)> 0\}.
	\end{equation*}
	Using \eqref{eq:F3rcover}, we conclude that $\Phi_3^t$ can be covered by
	\begin{align*}
	N(\eps_0,\Phi_3^t,d_n)
	&\leqslant K_1R_{n,\eps_0}^{16}\bigg(\frac{\eps_1^2}{\int G_{R_{n,\eps_0}}^2 \mathrm{d}Q_{n,R_{n,\eps_0}}}\bigg)^{-K_2(\log R_{n,\eps_0})/2}\\
	&\leqslant K_1R_{n,\eps_0}^{16}\left( \frac{\eps_0y_0^{\alpha}}{ 2R_{n,\eps_0}}\left(\sum_{j=1}^{m_n}\frac{\1\{H(T_{n,j}^*)> 0\}}{2nv_n}\right)^{-\frac{1}{2}}\right)^{-K_2\log R_{n,\eps_0}}
	\end{align*}
	balls with radius $\eps_0$ w.r.t.\ $d_n$. Since by Chebyshev's inequality and regular variation
	\begin{equation*}
	P\Big\{\sum_{j=1}^{m_n}\1\{H(T_{n,j}^*)>0\}>2nv_n\Big\}\leqslant \frac{m_nr_nP\{X_0>(1-\eps)u_n\}}{(nv_n)^2}\to 0,
	\end{equation*}
	we conclude
	\begin{equation} \label{eq:covnumbound}
	N(\eps_0,\Phi_3^t,d_n)\leqslant K_1R_{n,\eps_0}^{16}\left(\frac{\eps_0y_0^\alpha}{2R_{n,\eps_0}}\right)^{-K_2\log R_{n,\eps_0}}
	\end{equation}
	with probability tending to $1$.
	
	It remains to show that $R_{n,\eps_0}$ does not increase too fast as $\eps_0$ tends to 0.
	To this end, we decompose the unit interval into intervals $(2^{-(l+1)}, 2^{-l}]$, $l\in\N_0$. Check that by Markov's inequality and Lemma \ref{lemma:CimpliesD}
	
	\begin{align*}
	P\Big\{ \frac{1}{nv_n}&\sum_{j=1}^{m_n}G_{r_n}^2(T_{n,j}^*)\1\{H(T_{n,j}^*)>M\eps_0^{-3}\}>\frac{\eps_0^2}{2}\text{ for some }0<\eps_0\leqslant 1\Big\}\\
	&\leqslant\sum_{l=0}^{\infty}P\Big\{\frac{1}{nv_n}\sum_{j=1}^{m_n}G_{r_n}^2(T_{n,j}^*)
	\1\{H(T_{n,j}^*)>M2^{3l}\}>\frac{2^{-2(l+1)}}{2}\Big\}\\
	&\leqslant\sum_{l=0}^{\infty}2^{2l+3}E\Big[\frac{y_0^{-2\alpha}}{nv_n}\sum_{j=1}^{m_n}H^2(T_{n,j}^*)
	\1\{H(T_{n,j}^*)>M2^{3l}\}\Big]\\
	&\leqslant \sum_{l=0}^{\infty}2^{2l+3}\frac{m_ny_0^{-2\alpha}}{nv_n}\frac{E[H^3(T_{n,1}^*)]}{M2^{3l}}\\
	&\leqslant \frac{K_3}M \sum_{l=0}^{\infty}2^{-l}\\
	&<\eta
	\end{align*}
	for some constant $K_3$ depending on $y_0$, and $M>2K_3/\eta$. Hence $R_{n,\eps_0}\leqslant M\eps_0^{-3}$ with probability greater than $1-\eta$, so that by \eqref{eq:covnumbound}
	\begin{equation*}
	\int_0^{\delta}(\log N(\eps_0,\Phi_3,d_n))^{1/2}\ \mathrm{d}\eps_0\leqslant \int_0^{\delta}(K_4+K_5|\log \eps_0|+K_6\log^2\eps_0)^{1/2}\ \mathrm{d}\eps_0
	\end{equation*}
	for suitable constants $K_4,K_5,K_6>0$. Now condition (D6) is obvious.
\end{proof}

\begin{Lem}\label{lemma:asympHill}
	If the conditions \eqref{eq:threshconsist}, A($x_0$), (B), (C) and \eqref{eq:biasHillcond} hold, then
	\begin{align*}
	(nv_n)^{1/2}(\hat{\alpha}_{n,\hat{u}_n}-\alpha)&
	\rightsquigarrow \alpha Z(\phi_{1,1})-\alpha^2Z(\phi_{0,1})
	\end{align*}
	where $Z$ is the same centered Gaussian process as in Proposition \ref{prop1}.
\end{Lem}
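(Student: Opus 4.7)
The plan is to represent $\hat\alpha_{n,\hat u_n}$ as a ratio of empirical means evaluated at the random scale $S_n:=\hat u_n/u_n$ and to combine the functional central limit theorem of Proposition~\ref{prop1} with the bias control \eqref{eq:biasHillcond} via Slutsky's lemma. Since $S_n\in[1-\eps,1+\eps]$ with probability tending to one by \eqref{eq:threshconsist}, on this event $\phi_{1,S_n}(X_{n,i})=\1\{X_i>\hat u_n\}$ and $\phi_{0,S_n}(X_{n,i})=\log^+(X_i/\hat u_n)$, so
$$
\hat\alpha_{n,\hat u_n}=\frac{A_n}{B_n},\quad A_n:=\frac 1n\sum_{i=1}^n\phi_{1,S_n}(X_{n,i}),\quad B_n:=\frac 1n\sum_{i=1}^n\phi_{0,S_n}(X_{n,i}).
$$
Setting $\mu_n^{(j)}(s):=E[\phi_{j,s}(X_{n,1})]$ and using the definition of $\tilde Z_n$, we obtain
$$
(nv_n)^{1/2}\,\frac{A_n-\alpha B_n}{v_n}=\tilde Z_n(\phi_{1,S_n})-\alpha\,\tilde Z_n(\phi_{0,S_n})+(nv_n)^{1/2}\,\frac{\mu_n^{(1)}(S_n)-\alpha\mu_n^{(0)}(S_n)}{v_n}.
$$

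Next I would show that the deterministic bias term is $o_P(1)$. Factoring it as
$$
(nv_n)^{1/2}\,\frac{\mu_n^{(1)}(S_n)}{v_n}\cdot\alpha\!\left(\frac 1\alpha-E\!\left[\log(X_0/(S_n u_n))\,\big|\,X_0>S_n u_n\right]\right),
$$
the first factor converges to $1$ in probability by regular variation (since $\mu_n^{(1)}(s)/v_n\to s^{-\alpha}$ uniformly on compact neighbourhoods of $s=1$), while the second is $o_P((nv_n)^{-1/2})$ by \eqref{eq:biasHillcond}. The randomness of $S_n$ causes no difficulty here, because the assumption ``the bias is $o((nv_n)^{-1/2})$ for every deterministic sequence $s_n\to 1$'' is easily seen (by a subsequence argument) to be equivalent to the existence of $\delta_n\downarrow 0$ such that $\sup_{|s-1|\leqslant\delta_n}\lvert E[\log(X_0/(su_n))\mid X_0>su_n]-1/\alpha\rvert=o((nv_n)^{-1/2})$, and the event $\{|S_n-1|\leqslant\delta_n\}$ has probability tending to one.

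By Proposition~\ref{prop1}, $\tilde Z_n\rightsquigarrow Z$ weakly in $l^{\infty}(\Phi)$; the Gaussian limit $Z$ has almost surely uniformly continuous sample paths w.r.t.\ its covariance semi-metric, so in particular $s\mapsto Z(\phi_{j,s})$ is almost surely continuous at $s=1$. Combined with $S_n\to 1$ in probability, an extended continuous mapping argument (after a Skorokhod coupling) yields the joint convergence $(\tilde Z_n(\phi_{1,S_n}),\tilde Z_n(\phi_{0,S_n}))\rightsquigarrow(Z(\phi_{1,1}),Z(\phi_{0,1}))$. Finally, $B_n/v_n\to 1/\alpha$ in probability, since its mean part tends to $1/\alpha$ (by regular variation and \eqref{eq:biasHillcond}) while its centered part is $(v_n/n)^{1/2}\tilde Z_n(\phi_{0,S_n})=o_P(v_n)$. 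Slutsky's lemma then gives
$$
(nv_n)^{1/2}(\hat\alpha_{n,\hat u_n}-\alpha)=\frac{v_n}{B_n}\cdot(nv_n)^{1/2}\frac{A_n-\alpha B_n}{v_n}\ \rightsquigarrow\ \alpha Z(\phi_{1,1})-\alpha^2 Z(\phi_{0,1}),
$$
which is the asserted limit. The main technical subtlety is propagating the random scale $S_n$ through both the empirical process and the bias; both points are resolved by the built-in uniformity in $s\in[1-\eps,1+\eps]$ of Proposition~\ref{prop1} and by the sequence-wise formulation of \eqref{eq:biasHillcond}.
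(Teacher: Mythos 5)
Your proof is correct and rests on the same pillars as the paper's: Proposition~\ref{prop1}, the bias condition \eqref{eq:biasHillcond}, and consistency \eqref{eq:threshconsist} of $S_n$. The route is a mild variant of the paper's. The paper first forms the ratio into the process $V_n(s):=(nv_n)^{1/2}(\tilde\alpha_{n,s}-\alpha_{n,s})$, proves $V_n\rightsquigarrow V$ on $[1-\eps,1+\eps]$ (by a delta-method step applied to $\tilde Z_n$, citing the analogue in Drees, Segers and Warcho\l), and then plugs in the random argument $S_n$ via Skorohod coupling and the triangle inequality $|V_n(S_n)-V(1)|\leqslant\sup_s|V_n(s)-V(s)|+|V(S_n)-V(1)|$. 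You instead linearize $\hat\alpha_{n,\hat u_n}-\alpha=(A_n-\alpha B_n)/B_n$, push $S_n$ directly into the raw process $\tilde Z_n$ (using a.s.\ continuity of $s\mapsto Z(\phi_{j,s})$ plus Skorohod coupling), and only at the end apply Slutsky with $v_n/B_n\to\alpha$. The two orders of operations are equivalent; your version avoids stating the process convergence of the centred ratio $V_n$ explicitly, at the modest cost of having to verify $B_n/v_n\to 1/\alpha$ separately. Both proofs implicitly rely on the same subtlety that you spell out and the paper elides: because \eqref{eq:biasHillcond} holds for \emph{every} deterministic $s_n\to1$, the bias evaluated at the random $S_n$ is still $o_P((nv_n)^{-1/2})$. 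Your ``subsequence argument'' is the right idea; a slightly cleaner way to phrase the uniformity step is that the hypothesis is equivalent to: for each $\eta>0$ there exist $N$ and $\delta>0$ such that $(nv_n)^{1/2}|E[\log(X_0/(su_n))\mid X_0>su_n]-1/\alpha|<\eta$ for all $n\geqslant N$ and $|s-1|<\delta$; this, together with $P\{|S_n-1|\geqslant\delta\}\to 0$ for the \emph{fixed} $\delta$, gives the conclusion without needing the auxiliary sequence $\delta_n$ to shrink slowly enough to also capture $S_n$ (a point your phrasing leaves to the reader). Finally, the a.s.\ continuity of $s\mapsto Z(\phi_{j,s})$ does need the small additional observation that $s\mapsto\phi_{j,s}$ is continuous into $\Phi$ equipped with the covariance semi-metric (e.g.\ $E[(\phi_{1,s}(\bar Y_0)-\phi_{1,s'}(\bar Y_0))^2]=|s^{-\alpha}-s'^{-\alpha}|$); you gesture at this but don't check it, though the paper is equally terse there.
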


\begin{proof}
	For $s\in[1-\eps,1+\eps]$, define
	\begin{equation}\label{eq:hillestins}
	\alpha_{n,s}:=\frac{1}{E[\log(X_0/(su_n))\ |\ X_0>su_n]}\quad\text{and}\quad \tilde{\alpha}_{n,s}:=\frac{\sum_{i=1}^n\phi_{1,s}(X_{n,i})}{\sum_{i=1}^n\phi_{0,s}(X_{n,i})}
	\end{equation}
	and processes $V_n(s):=(nv_n)^{1/2}(\tilde{\alpha}_{n,s}-\alpha_{n,s})$ and $V(s):=s^{\alpha}(\alpha Z(\phi_{1,s})-\alpha^2 Z(\phi_{0,s}))$. Note that, by \eqref{eq:threshconsist}, $S_n\to 1$ in probability, and so $\hat{\alpha}_{n,\hat{u}_n}=\tilde{\alpha}_{n,S_n}$  with probability tending to 1. In view of \eqref{eq:biasHillcond}, $(nv_n)^{1/2}(\hat{\alpha}_{n,\hat{u}_n}-\alpha)=V_n(S_n)+o_P(1)$.
	
	By similar arguments as in proof of Lemma 4.4 of \cite{Drees.2015}, one can conclude from Proposition \ref{prop1} that
	$V_n\rightsquigarrow V$
	(w.r.t.\ the supremum norm) and that $V$ has continuous sample paths almost surely. Using Slutsky's lemma, we obtain $(V_n,S_n)\rightsquigarrow (V,1)$, and by Skorohod's theorem, there are versions for which the convergence holds almost surely.
	It follows that
	\begin{equation*}
	|V_n(S_n)-V(1)|\leqslant \sup_{s\in[1-\eps,1+\eps]}|V_n(s)-V(s)|+|V(S_n)-V(1)|\to 0
	\end{equation*}
	almost surely, from which the assertion is obvious.
\end{proof}

\begin{proofofthm}
	The assertion follows from arguments along the line of reasoning used  in the proof of Theorem 4.5 in \cite{Drees.2015} with similar modifications as employed in the proof of Lemma \ref{lemma:asympHill}.
\end{proofofthm}

\begin{proofoflemma}
	Fix an arbitrary $\delta\in(0,\eps)$ and choose some $a_{\delta}^+\in(0,1-(1+\delta)^{-\alpha})$ and $a_{\delta}^-\in(0,(1-\delta)^{-\alpha}-1)$. Then, by regular variation of $F^{\leftarrow}$, we have
	\begin{align*}
	(1+\delta)F^{\leftarrow}(1-k_n/n)&> F^{\leftarrow}(1-(1-a_{\delta}^+)k_n/n),\\
	(1-\delta)F^{\leftarrow}(1-k_n/n)&< F^{\leftarrow}(1-(1+a_{\delta}^-)k_n/n)
	\end{align*}
	for sufficiently large $n$, and hence $\bar F\big((1+\delta)F^\leftarrow(1-k_n/n)\big)<(1-a_\delta^+)k_n/n$ and $\bar F\big((1-\delta)F^\leftarrow(1-k_n/n)\big)>(1+a_\delta^-)k_n/n$. Let $u_n=F^{\leftarrow}(1-k_n/n)$ so that  $v_n=(1+o(1))k_n/n$ by the regular variation of $\bar F$ \citep[Th.\ 1.5.12]{Binghametal.1987}. The proof of Prop.~\ref{prop1} shows that $\tilde{Z}_n(\phi_{1,s})$ converge weakly to a normal distribution, because $\phi_{1,s}$ is almost surely continuous w.r.t.\ $\mathcal{L}(\bar{Y}_0)$.
	Thus,
	\begin{align*}
	&P\Big\{\frac{X_{n-k_n:n}}{F^{\leftarrow}(1-k_n/n)}>1+\delta\Big\}\\
	&\qquad=P\Big\{\sum_{i=1}^{n}\1\big\{X_i>(1+\delta)F^{\leftarrow}(1-k_n/n)\big\}>k_n\Big\}\\
	&\qquad=P\Big\{\tilde{Z}_n(\phi_{1,1+\delta})>(nv_n)^{-1/2}\big(k_n-nP\{X_0>(1+\delta)F^{\leftarrow}(1-k_n/n)\}\big)\Big\}\\
	&\qquad\leqslant P\Big\{\tilde{Z}_n(\phi_{1,1+\delta})>\frac{a_{\delta}^+}2 k_n^{1/2}\Big\}\to 0.
	\end{align*}
	Analogously, one obtains
	\begin{align*}
	P\Big\{\frac{X_{n-k_n:n}}{F^{\leftarrow}(1-k_n/n)}>1-\delta\Big\}\geqslant  P\Big\{\tilde{Z}_n(\phi_{1,1-\delta})>-a_{\delta}^-k_n^{1/2}\Big\}\to 1.
	\end{align*}
	Now let $\delta$ tend to 0 to conclude the assertion.
\end{proofoflemma}

\begin{proofofthmboot}
	First note that, by Corollary 3.9 of \cite{Drees.2010}, the asymptotic behavior of the processes $\tilde Z_n$ is not changed if one replaces $n$ with $r_nm_n$. One may easily conclude that, up to terms of the order $o_P((nv_n)^{-1/2})$, the estimators  $\hat F_{n,su_n}^{(f,\Theta_t)}$, $\hat F_{n,su_n}^{(b,\Theta_t)}$ and $\tilde\alpha_{n,s}$ (defined in \eqref{eq:hillestins}) do not change either. Hence, w.l.o.g.\ we assume that $n=m_nr_n$.
	
	The crucial observation to establish consistency of the bootstrap version is that the bootstrap processes
	$$ Z_n^*(\psi) := (nv_n)^{-1/2} \sum_{j=1}^{m_n} \xi_j \sum_{i\in I_j} \big(\psi(X_{n,i})-E\psi (X_{n,i})\big) $$
	converge to the same limit $Z$ as $Z_n$, both unconditionally and conditionally given $X_{n,1},\ldots,X_{n,n}$; see \cite{Drees.2015b}, Corollary 2.7.
	Define
	$$ \tilde{\alpha}_{n,s}^*:=\frac{\sum_{j=1}^{m_n}(1+\xi_j)\sum_{i\in I_j}\phi_{1,s}(X_{n,i})}{\sum_{j=1}^{m_n}(1+\xi_j)\sum_{i\in I_j}\phi_{0,s}(X_{n,i})}, $$
	and $V_n(s):=(nv_n)^{1/2}(\tilde{\alpha}_{n,s}^*-\tilde{\alpha}_{n,s})$. By similar calculations as in the proof of Theorem 3.3 of \cite{Davis.2018}, one obtains $V_n\rightsquigarrow V$ with $V(s):=s^{\alpha}(\alpha Z(\phi_{1,s})-\alpha^2 Z(\phi_{0,s}))$ denoting the limit process in Lemma  \ref{lemma:asympHill}. Since $V$ has a.s.\ continuous sample paths, $S_n\to 1$ in probability and $\hat\alpha_{n,\hat u_n}^*=\tilde{\alpha}_{n,S_n}^*$ and $\hat\alpha_{n,\hat u_n}=\tilde{\alpha}_{n,S_n}$ with probability tending to 1, the convergence $(nv_n)^{1/2}(\hat{\alpha}_{n,\hat u_n}^*-\hat{\alpha}_{n,\hat{u}_n})=V_n(S_n)+o_P(1)\rightsquigarrow V(1)$ follows readily.

	Now one may argue as in the proof of Theorem 3.3 of \cite{Davis.2018} to verify the assertion. To this end, one must replace $u_n$ with $\hat u_n=S_nu_{n}$ everywhere. For example, equation (6.10) of  \cite{Davis.2018} now becomes
	$$ \sum_{j=1}^{m_n} \xi_j\sum_{i\in I_j} \1\{X_i>S_nu_{n}\} = (nv_n)^{1/2} Z_{n,\xi}(\phi_{1,S_n}) + r_n\sum_{j=1}^{m_n} \xi_j P\{X_0>su_{n}\}|_{s=S_n}, $$
	where the last term is of stochastic order $r_nm_n^{1/2}v_n=o((nv_n)^{1/2})$. Thus
	\begin{equation}  \label{eq:bootex}
	\sum_{j=1}^{m_n} (1+\xi_j)\sum_{i\in I_j} \1\{X_i>S_nu_{n}\} = \sum_{i=1}^{n} \1\{X_i>S_nu_{n}\} +O_P\big((nv_n)^{1/2}\big).
	\end{equation}
	Since, by the law of large numbers, $\sum_{i=1}^{n} \1\{X_i>su_{n}\}=nP\{X_0>su_n\}(1+o_P(1))$ for all fixed $s\in [1-\eps,1+\eps]$ and $S_n\to 1$ in probability, a standard argument shows that the right hand side of \eqref{eq:bootex} equals $nv_n(1+o_P(1))$, and the proof can be concluded as in  \cite{Davis.2018}.
\end{proofofthmboot}

\noindent\textbf{Acknowledgements}\quad
	We thank the associate editor and two referees for their suggestions and comments which helped to improve the presentation of the results and to simplify the discussion of the stochastic recurrence equations.
\bibliography{Quelle}{}  
%
%
\end{document}


\maketitle

In this supplement, we present the explicit covariance function of the limit process in Theorem 2.1 and give further results from the simulation study in Section 3.

\section*{Covariances}
Using formula (C.1) of Proposition C.2, we will calculate the covariances of the limit process in Theorem 2.1 of the manuscript. Recall that the limit process is composed of
\begin{align*}
Z_{fw}^{(t)}(x)&:=Z(\phi_{2,x,1}^t)-\bar{F}^{(\Theta_t)}(x)Z(\phi_{1,1}),\\
Z_{bw}^{(t)}(y)&:=Z(\phi_{3,y,1}^t)-\bar{F}^{(\Theta_t)}(y)Z(\phi_{1,1})+Z_{\alpha}(y)
\end{align*}
with
\begin{equation*}
Z_{\alpha}(y)=(\alpha^2Z(\phi_{0,1})-\alpha Z(\phi_{1,1}))E[\log(\Theta_t)\1\{\Theta_t>y\}].
\end{equation*}
Let $x,x^*\in [x_0,\infty),$ $y,y^*\in [y_0,\infty),$ and $t,t^*\in\{-\tilde{t},\dots,\tilde{t}\}\setminus\{0\}$. Then, we have
\begin{align*}
\var (Z(\phi_{1,1}))&=1+2\sum_{k=1}^{\infty}P\{Y_0\Theta_k>1\},\\
\cov(Z(\phi_{1,1}),Z(\phi_{0,1}))&=\alpha^{-1}+\sum_{k=1}^{\infty}(E[\log(Y_0)\1\{Y_0\Theta_k>1\}]+E[\log^+(Y_0\Theta_k)]),\\
\cov(Z(\phi_{1,1}),Z(\phi_{2,x,1}^t))&=\bar{F}^{(\Theta_t)}(x)+\sum_{k=1}^{\infty}\Big(P\{\Theta_t>x,Y_0\Theta_k>1\}+P\Big\{\frac{\Theta_{k+t}}{\Theta_k}>x,Y_0\Theta_k>1\Big\}\Big),\\
\cov(Z(\phi_{1,1}),Z(\phi_{3,y,1}^t))&=E[\Theta_{-t}^{\alpha}\1\{\Theta_{-t}<y^{-1}\}]\\
&\quad+\sum_{k=1}^{\infty}\Big(E[\Theta_{-t}^{\alpha}\1\{\Theta_{-t}<y^{-1},Y_0\Theta_k>1\}]+E\Big[\Big(\frac{\Theta_{k-t}}{\Theta_k}\Big)^{\alpha}\1\Big\{\frac{\Theta_{k-t}}{\Theta_k}<y^{-1},Y_0\Theta_k>1\Big\}\Big]\Big),\\
\var(Z(\phi_{0,1}))&=2\alpha^{-2}+2\sum_{k=1}^{\infty}E[\log(Y_0)\log^+(Y_0\Theta_k)],\\
\cov(Z(\phi_{0,1}),Z(\phi_{2,x,1}^t))&=\alpha^{-1}\bar{F}^{(\Theta_t)}(x)\\
&\quad+\sum_{k=1}^{\infty}\Big(E[\log^+(Y_0\Theta_k)\1\{\Theta_t>x,Y_0\Theta_k>1\}]+E\Big[\log(Y_0)\1\Big\{\frac{\Theta_{k+t}}{\Theta_k}>x,Y_0\Theta_k>1\Big\}\Big]\Big),\\
\cov(Z(\phi_{0,1}),Z(\phi_{3,y,1}^t))&=\alpha^{-1}E[\Theta_{-t}^{\alpha}\1\{\Theta_{-t}<y^{-1}\}]+\sum_{k=1}^{\infty}\Big(E[\Theta_{-t}^{\alpha}\log^+(Y_0\Theta_k)\1\{\Theta_{-t}<y^{-1},Y_0\Theta_k>1\}]\\
&\quad+E\Big[\Big(\frac{\Theta_{k-t}}{\Theta_k}\Big)^{\alpha}\log(Y_0)\1\Big\{\frac{\Theta_{k-t}}{\Theta_k}<y^{-1},Y_0\Theta_k>1\Big\}\Big]\Big),\\
\cov(Z(\phi_{2,x,1}^t),Z(\phi_{2,x^*,1}^{t^*}))&=P\{\Theta_t>x,\Theta_{t^*}>x^*\}\\
&\quad+\sum_{k=1}^{\infty}\Big(P\Big\{\Theta_t>x,\frac{\Theta_{k+t^*}}{\Theta_k}>x^*,Y_0\Theta_k>1\Big\}+P\Big\{\frac{\Theta_{k+t}}{\Theta_k}>x,\Theta_{t^*}>x^*,Y_0\Theta_k>1\Big\}\Big),
\end{align*}
\pagebreak
\begin{align*}
\cov(Z(\phi_{2,x,1}^t),Z(\phi_{3,y^*,1}^{t^*}))&=E[\Theta_{-t^*}^{\alpha}\1\{\Theta_{-t^*}<(y^*)^{-1},\Theta_t>x\}]\\
&\quad+\sum_{k=1}^{\infty}\Big(E\Big[\Theta_{-t^*}^{\alpha}\1\Big\{\Theta_{-t^*}<(y^*)^{-1},\frac{\Theta_{k+t}}{\Theta_k}>x,Y_0\Theta_k>1\Big\}\Big]\\
&\quad\qquad\qquad+E\Big[\Big(\frac{\Theta_{k-t^*}}{\Theta_k}\Big)^{\alpha}\1\Big\{\frac{\Theta_{k-t^*}}{\Theta_k}<(y^*)^{-1},\Theta_{t}>x,Y_0\Theta_k>1\Big\}\Big]\Big),\\
\cov(Z(\phi_{3,y,1}^t),Z(\phi_{3,y^*,1}^{t^*}))&=E[\Theta_{-t}^{\alpha}\Theta_{-t^*}^{\alpha}\1\{\Theta_{-t}<y^{-1},\Theta_{-t^*}<(y^*)^{-1}\}]\\
&\quad+\sum_{k=1}^{\infty}\Big(E\Big[\Theta_{-t}^{\alpha}\Big(\frac{\Theta_{k-t^*}}{\Theta_k}\Big)^{\alpha}\1\Big\{\Theta_{-t}<y^{-1},\frac{\Theta_{k-t^*}}{\Theta_k}<(y^*)^{-1},Y_0\Theta_k>1\Big\}\Big]\\
&\quad\qquad\qquad+E\Big[\Big(\frac{\Theta_{k-t}}{\Theta_k}\Big)^{\alpha}\Theta_{-t^*}^{\alpha}\1\Big\{\frac{\Theta_{k-t}}{\Theta_k}<y^{-1},\Theta_{-t^*}<(y^*)^{-1},Y_0\Theta_k>1\Big\}\Big]\Big).
\end{align*}
The expected values containing $Y_0$ can also be expressed in terms of the spectral process only, using that $Y_0$ is standard Pareto random variable independent of the spectral process $(\Theta_k)_{k\in\Z}$. For example, with Fubini's theorem we obtain
\begin{align*}
E[\log^+(Y_0\Theta_k)]&= E\Big[\alpha \int_1^{\infty}\log^+(y\Theta_k)y^{-(\alpha+1)}\ \mathrm{d}y\Big]=E\Big[-y^{-\alpha}(\log(y\Theta_k)+\alpha^{-1})\Big|_{\max\{1,1/\Theta_k\}}^{\infty}\Big]\\
&=E\big[\min\{1,\Theta_k^{\alpha}\}\big(\log^+\Theta_k+\alpha^{-1}\big)\big]=E[\log^+\Theta_k]+\alpha^{-1}E[\min\{1,\Theta_k^{\alpha}\}].
\end{align*}

The covariance function of the limit process can be written as follows:
\begin{align*}
\cov(Z_{fw}^{(t)}(x),Z_{fw}^{(t^*)}(x^*))&=P\{\Theta_t>x,\Theta_{t^*}>x^*\}-\bar{F}^{(\Theta_t)}(x)\bar{F}^{(\Theta_{t^*})}(x^*)\\
&\quad+\sum_{k=1}^{\infty}\Big(P\Big\{\Theta_t>x,\frac{\Theta_{k+t^*}}{\Theta_k}>x^*,Y_0\Theta_k>1\Big\}+P\Big\{\frac{\Theta_{k+t}}{\Theta_k}>x,\Theta_{t^*}>x^*,Y_0\Theta_k>1\Big\}\Big)\\
&\quad-\bar{F}^{(\Theta_{t^*})}(x^*)\sum_{k=1}^{\infty}\Big(P\{\Theta_t>x,Y_0\Theta_k>1\}+P\Big\{\frac{\Theta_{k+t}}{\Theta_k}>x,Y_0\Theta_k>1\Big\}\Big)\\
&\quad-\bar{F}^{(\Theta_{t})}(x)\sum_{k=1}^{\infty}\Big(P\{\Theta_{t^*}>x^*,Y_0\Theta_k>1\}+P\Big\{\frac{\Theta_{k+t^*}}{\Theta_k}>x^*,Y_0\Theta_k>1\Big\}\Big),
\end{align*}
\begin{align*}
&\cov(Z_{fw}^{(t)}(x),Z_{bw}^{(t^*)}(y^*))\\
&=E[\Theta_{-t^*}^{\alpha}\1\{\Theta_{-t^*}<(y^*)^{-1},\Theta_t>x\}]\\
&\qquad+\sum_{k=1}^{\infty}\Big(E\Big[\Theta_{-t^*}^{\alpha}\1\Big\{\Theta_{-t^*}<(y^*)^{-1},\frac{\Theta_{k+t}}{\Theta_k}>x,Y_0\Theta_k>1\Big\}\Big]\\
&\qquad\hspace{30ex}+E\Big[\Big(\frac{\Theta_{k-t^*}}{\Theta_k}\Big)^{\alpha}\1\Big\{\frac{\Theta_{k-t^*}}{\Theta_k}<(y^*)^{-1},\Theta_t>x,Y_0\Theta_k>1\Big\}\Big]\Big)\\
&\qquad-\bar{F}^{(\Theta_{t^*})}(y^*)\sum_{k=1}^{\infty}\Big(P\{\Theta_t>x,Y_0\Theta_k>1\}+P\Big\{\frac{\Theta_{k+t}}{\Theta_k}>x,Y_0\Theta_k>1\Big\}-2\bar{F}^{(\Theta_{t})}(x)P\{Y_0\Theta_k>1\}\Big)\\
&\qquad-\bar{F}^{(\Theta_{t})}(x)\bigg[E[\Theta_{-t^*}^{\alpha}\1\{\Theta_{-t^*}<(y^*)^{-1}\}]\\
&\qquad\qquad+\sum_{k=1}^{\infty}\Big(E[\Theta_{-t^*}^{\alpha}\1\{\Theta_{-t^*}<(y^*)^{-1},Y_0\Theta_k>1\}]+E\Big[\Big(\frac{\Theta_{k-t^*}}{\Theta_k}\Big)^{\alpha}\1\Big\{\frac{\Theta_{k-t^*}}{\Theta_k}<(y^*)^{-1},Y_0\Theta_k>1\Big\}\Big]\Big)\bigg]\\
&\qquad+E[\log(\Theta_{t^*})\1\{\Theta_{t^*}>y^*\}]\\
&\qquad\qquad\times\bigg[\alpha^2\Big(\sum_{k=1}^{\infty}\Big(E[\log(Y_0\Theta_k)\1\{\Theta_t>x,Y_0\Theta_k>1\}]+E\Big[\log(Y_0)\1\Big\{\frac{\Theta_{k+t}}{\Theta_k}>x,Y_0\Theta_k>1\Big\}\Big]\Big)\Big)\\
&\qquad\qquad\qquad -\alpha\Big(\bar{F}^{(\Theta_t)}(x)+\sum_{k=1}^{\infty}\Big(P\{\Theta_t>x,Y_0\Theta_k>1\}+P\Big\{\frac{\Theta_{k+t}}{\Theta_k}>x,Y_0\Theta_k>1\Big\}\Big)\Big)\bigg]\\
&\qquad-\bar{F}^{(\Theta_t)}(x)E[\log(\Theta_{t^*})\1\{\Theta_{t^*}>y^*\}]\\
&\qquad\qquad\times\bigg[\alpha^2\Big(\sum_{k=1}^{\infty}\Big(E[\log(Y_0)\1\{Y_0\Theta_k>1\}]+E[\log^+(Y_0\Theta_k)]\Big)\Big)-\alpha\Big(1+2\sum_{k=1}^{\infty}P\{Y_0\Theta_k>1\}\Big)\bigg]
\end{align*}
\pagebreak
and
\begin{align*}
&\cov(Z_{bw}^{(t)}(y),Z_{bw}^{(t^*)}(y^*))\\
&=E[\Theta_{-t}^\alpha\Theta_{-t^*}^{\alpha}\1\{\Theta_{-t}<y^{-1},\Theta_{-t^*}<(y^*)^{-1}\}]\\
&\qquad+\sum_{k=1}^{\infty}\Big(E\Big[\Theta_{-t}^{\alpha}\Big(\frac{\Theta_{k-t^*}}{\Theta_k}\Big)^{\alpha}\1\Big\{\Theta_{-t}<y^{-1},\frac{\Theta_{k-t^*}}{\Theta_k}<(y^*)^{-1},Y_0\Theta_k>1\Big\}\Big]\\
&\qquad\qquad\qquad+E\Big[\Big(\frac{\Theta_{k-t}}{\Theta_k}\Big)^{\alpha}\Theta_{-t^*}^{\alpha}\1\Big\{\frac{\Theta_{k-t}}{\Theta_k}<y^{-1},\Theta_{-t^*}<(y^*)^{-1},Y_0\Theta_k>1\Big\}\Big]\Big)\\
&\qquad-\big(\bar{F}^{(\Theta_{t^*})}(y^*)+\alpha E[\log(\Theta_{t^*})\1\{\Theta_{t^*}>y^*\}]\big)\Big(E[\Theta_{-t}^{\alpha}\1\{\Theta_{-t}<y^{-1}\}]\\
&\hspace{14ex}+\sum_{k=1}^{\infty}\Big(E[\Theta_{-t}^{\alpha}\1\{\Theta_{-t}<y^{-1},Y_0\Theta_k>1\}]+E\Big[\Big(\frac{\Theta_{k-t}}{\Theta_k}\Big)^{\alpha}\1\Big\{\frac{\Theta_{k-t}}{\Theta_k}<y^{-1},Y_0\Theta_k>1\Big\}\Big]\Big)\Big)\\
&\qquad-\big(\bar{F}^{(\Theta_{t})}(y)+\alpha E[\log(\Theta_t)\1\{\Theta_t>y\}]\big)\Big(E[\Theta_{-t^*}^{\alpha}\1\{\Theta_{-t^*}<(y^*)^{-1}\}]\\
&\hspace{14ex}+\sum_{k=1}^{\infty}\Big(E[\Theta_{-t^*}^{\alpha}\1\{\Theta_{-t^*}<(y^*)^{-1},Y_0\Theta_k>1\}]+E\Big[\Big(\frac{\Theta_{k-t^*}}{\Theta_k}\Big)^{\alpha}\1\Big\{\frac{\Theta_{k-t^*}}{\Theta_k}<(y^*)^{-1},Y_0\Theta_k>1\Big\}\Big]\Big)\Big)\\
&\qquad + \bar{F}^{(\Theta_t)}(y)\bar{F}^{(\Theta_{t^*})}(y^*)\Big(1+2\sum_{k=1}^{\infty}P\{Y_0\Theta_k>1\}\Big)\\
&\qquad + E[\log(\Theta_{t^*})\1\{\Theta_{t^*}>y^*\}]\bigg[\alpha^2\Big(\alpha^{-1}E[\Theta_{-t}^{\alpha}\1\{\Theta_{-t}<y^{-1}\}]\\
&\qquad\qquad +\sum_{k=1}^{\infty}\Big(E[\Theta_{-t}^{\alpha}\log(Y_0\Theta_k)\1\{\Theta_{-t}<y^{-1},Y_0\Theta_k>1\}]+E\Big[\Big(\frac{\Theta_{k-t}}{\Theta_k}\Big)^{\alpha}\log(Y_0)\1\Big\{\frac{\Theta_{k-t}}{\Theta_k}<y^{-1},Y_0\Theta_k>1\}\Big]\Big)\Big)\bigg]\\
&\qquad +E[\log(\Theta_t)\1\{\Theta_t>y\}]\bigg[\alpha^2\Big(\alpha^{-1}E[\Theta_{-t^*}^{\alpha}\1\{\Theta_{-t^*}<(y^*)^{-1}\}]\\
&\qquad\qquad +\sum_{k=1}^{\infty}\Big(E[\Theta_{-t^*}^{\alpha}\log(Y_0\Theta_k)\1\{\Theta_{-t^*}<(y^*)^{-1},Y_0\Theta_k>1\}]\\
&\qquad\hspace{30ex}+E\Big[\Big(\frac{\Theta_{k-t^*}}{\Theta_k}\Big)^{\alpha}\log(Y_0)\1\Big\{\frac{\Theta_{k-t^*}}{\Theta_k}<(y^*)^{-1},Y_0\Theta_k>1\}\Big]\Big)\Big)\bigg]\\
&\qquad-\bigg[\bar{F}^{(\Theta_t)}(y)E[\log(\Theta_{t^*})\1\{\Theta_{t^*}>y^*\}]+\bar{F}^{(\Theta_{t^*})}(y^*)E[\log(\Theta_{t})\1\{\Theta_{t}>y\}]\\
&\qquad\qquad +2\alpha E[\log(\Theta_t)\log(\Theta_{t^*})\1\{\Theta_t>y\}\1\{\Theta_{t^*}>y^*\}]\bigg]\times\bigg[\alpha^2\Big(\alpha^{-1}\\
&\qquad\qquad\qquad+\sum_{k=1}^{\infty}(E[\log(Y_0)\1\{Y_0\Theta_k>1\}]+E[\log^+(Y_0\Theta_k)])\Big)-\alpha\Big(1+2\sum_{k=1}^{\infty}P\{Y_0\Theta_k>1\}\Big)\bigg]\\
&\qquad+E[\log(\Theta_t)\log(\Theta_{t^*})\1\{\Theta_t>y\}\1\{\Theta_{t^*}>y^*\}]\\
&\qquad\quad\times \bigg[\alpha^4\Big(2\alpha^{-2}+2\sum_{k=1}^{\infty}E[\log(Y_0)\log^+(Y_0\Theta_k)]\Big)-\alpha^2\Big(1+2\sum_{k=1}^{\infty}P\{Y_0\Theta_k>1\}\Big)\bigg].
\end{align*}
\pagebreak

\section*{Further simulation results}

Here, we compare the distribution of our estimators  of $P\{\Theta_t>x\}$ based on exceedances over quantiles $F^{\leftarrow}(1-k/n)$ and over the corresponding order statistics $X_{n-k:n}$, respectively.
In the nGARCH, tGARCH, gumCopula (with $\theta\in\{1.2,1.5,2\}$) and tCopula (with $\rho\in\{0.25,0.5, 0.75\}$) models described in Section 3 of the manuscript, we consider lags $t\in\{1,5\}$ and arguments $x\in\{1/2,1\}$. The sample sizes, number of simulations and the lay-out of figures are the same as in the manuscript. In each figure, Q-Q plots of the forward estimator are shown in the top row, while the bottom row shows results for backward estimator. In the left column, the theoretical resp.\ empirical quantile to the 90\% level is used as threshold, and the right plots correspond to the 95\% level. The main diagonal is indicated by a dashed red line.

Figures \ref{fig1} and \ref{fig2} show the Q-Q plots for the estimators of $P\{\Theta_5>1\}$ resp. $P\{\Theta_5>1/2\}$, i.e.\ they are counterparts to Figures 1 and 3 of the manuscript for lag 5 instead of 1. Indeed, the corresponding plots look very similar. In particular, most points lie quite close to the main diagonal, but the version based on exceedances over order statistics performs a bit better in that extreme estimation errors occur more rarely.

\begin{figure}[b]
	\centering
	\includegraphics[width=1\textwidth]{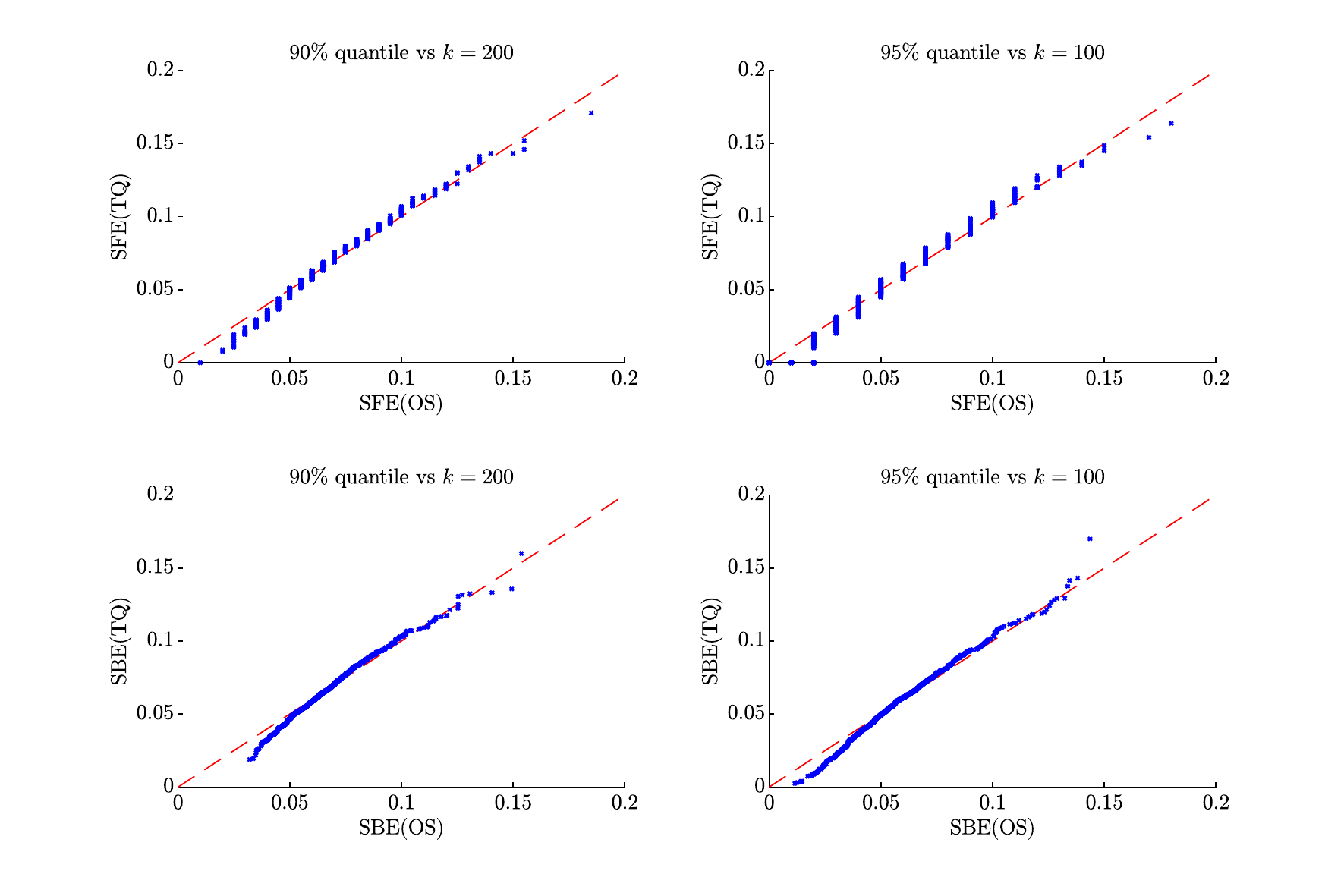}
	\caption{Comparison of estimators of  $P\{\Theta_5>1\}$  in the nGARCH model}
	\label{fig1}
\end{figure}
\begin{figure}[tb]
	\centering
	\includegraphics[width=1\textwidth]{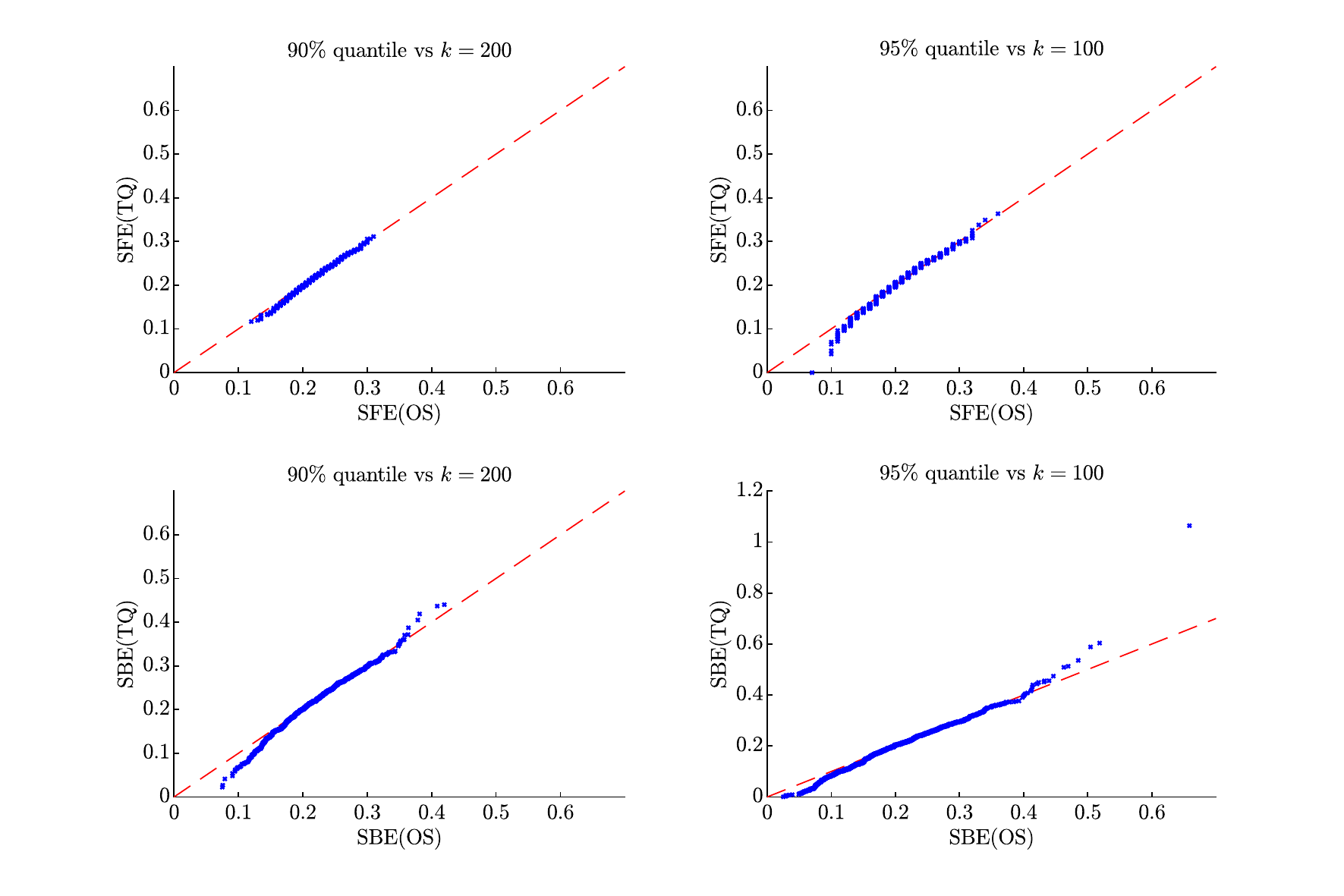}
	\caption{ Comparison of estimators of  $P\{\Theta_5>1/2\}$  in the nGARCH model}
	\label{fig2}
\end{figure}

Figure \ref{fig3}--\ref{fig6} show the Q-Q plots in the tGARCH model with $x\in\{1/2,1\}$ and lags $t\in\{1,5\}$. Overall, the plots are similar to the ones for the nGARCH model, but the strong overestimation by the backward estimator based on exceedances over the true quantile, which occurred in a few simulations in the nGARCH model, is not observed here.

\begin{figure}[tb]
	\centering
	\includegraphics[width=1\textwidth]{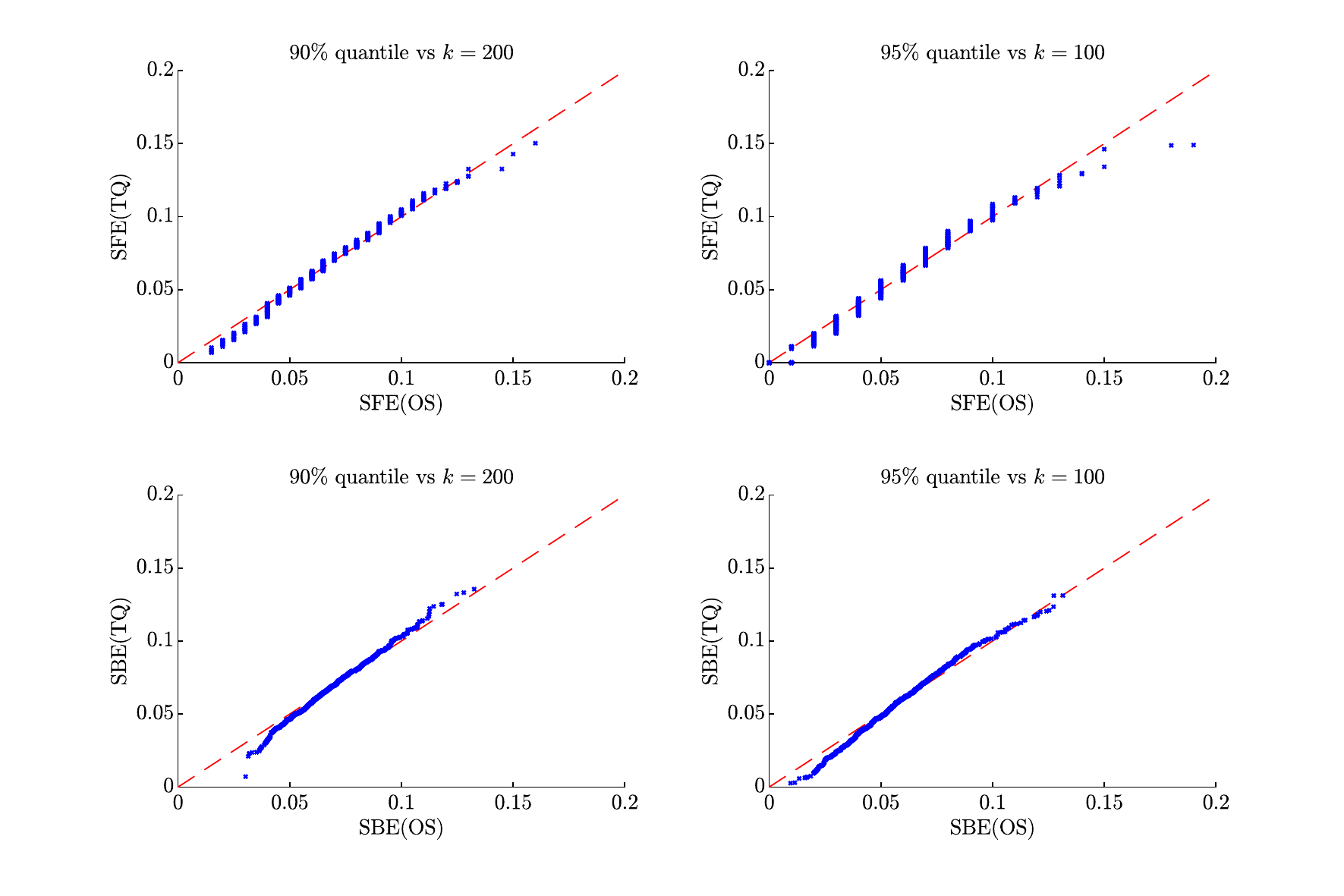}
	\caption{ Comparison of estimators of  $P\{\Theta_1>1\}$  in the tGARCH model}
	\label{fig3}
\end{figure}
\begin{figure}[tb]
	\centering
	\includegraphics[width=1\textwidth]{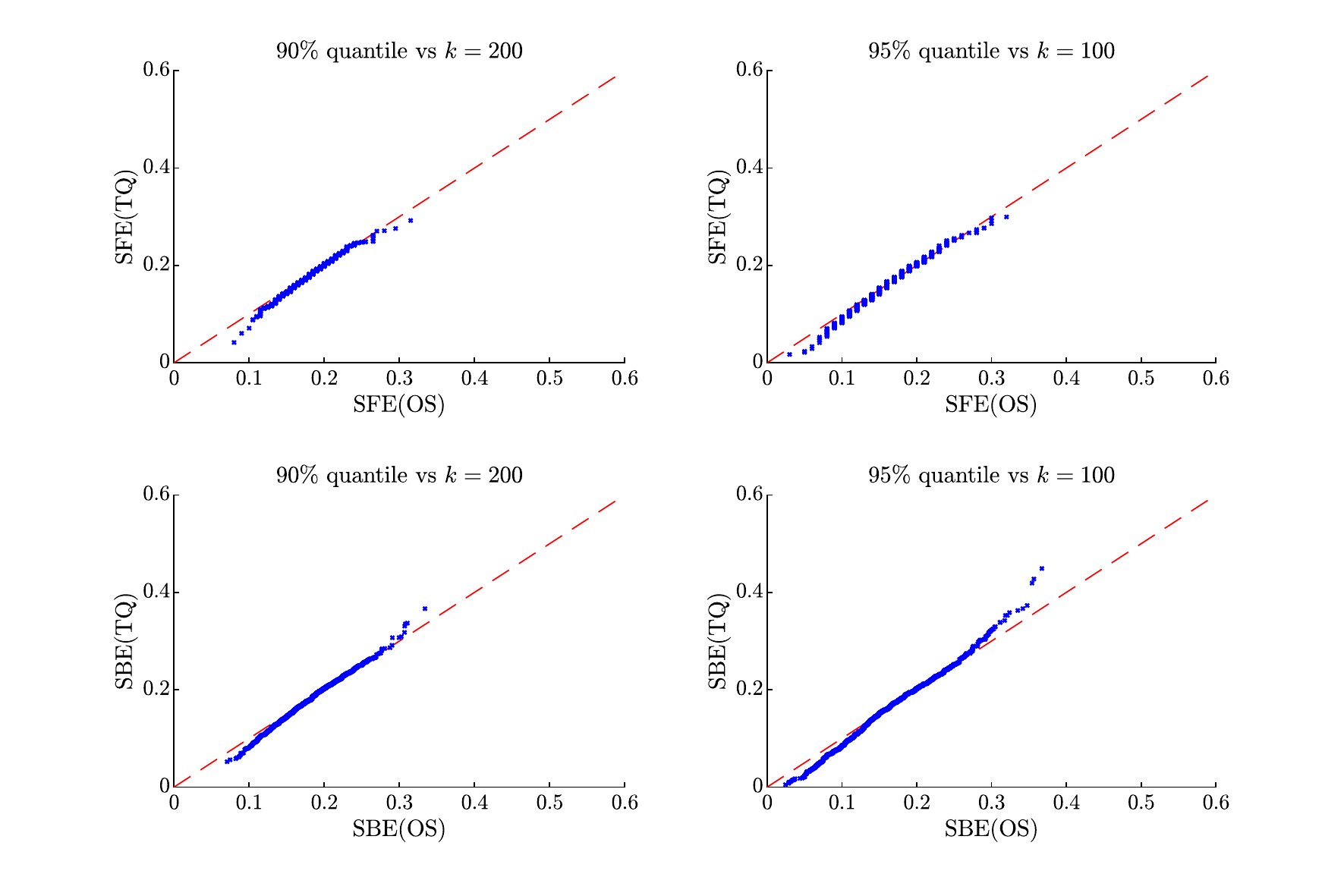}
	\caption{Comparison of estimators of  $P\{\Theta_1>1/2\}$  in the tGARCH model}
	\label{fig4}
\end{figure}
\begin{figure}[tb]
	\centering
	\includegraphics[width=1\textwidth]{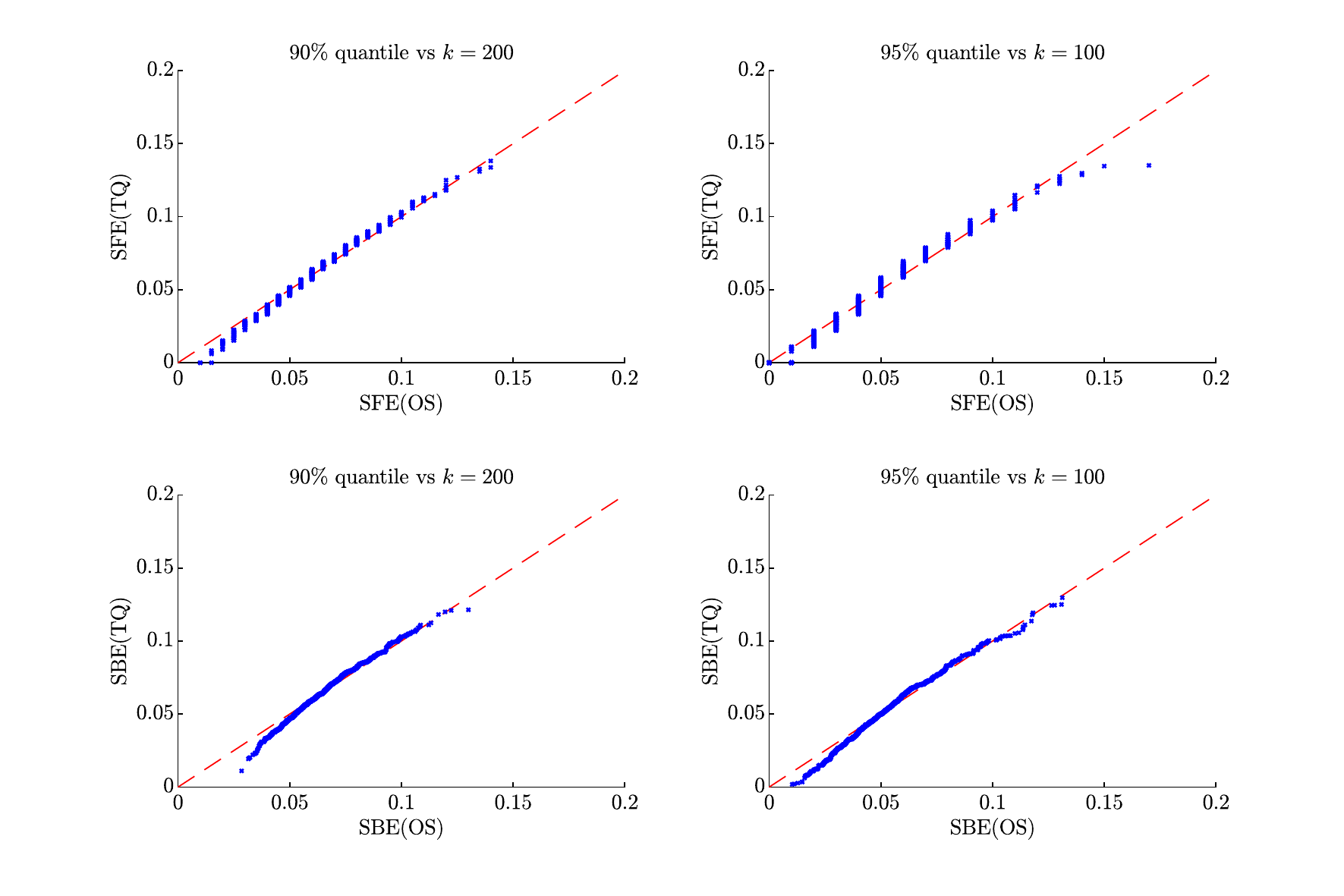}
	\caption{Comparison of estimators of  $P\{\Theta_5>1\}$  in the tGARCH model}
	\label{fig5}
\end{figure}

\begin{figure}[tb]
	\centering
	\includegraphics[width=1\textwidth]{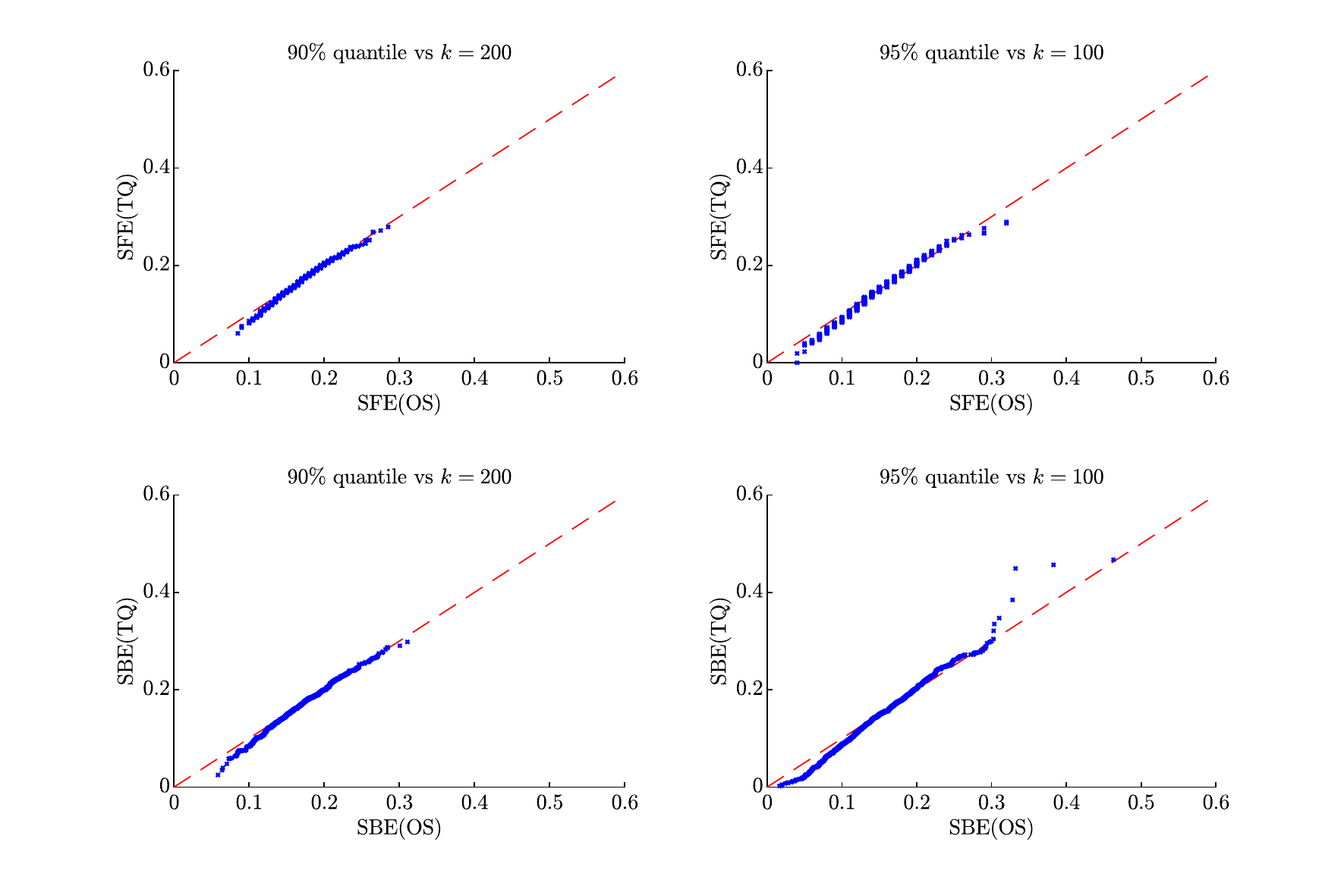}
	\caption{Comparison of estimators of  $P\{\Theta_5>1/2\}$  in the tGARCH model}
	\label{fig6}
\end{figure}

Figures \ref{figtcop0} and \ref{figtcop1} compare both versions of the estimators of the probabilities $P\{\Theta_5>1/2\}$  and $P\{\Theta_5>1\}$, respectively, in the tCopula model with $\rho=0.25$, while
Figures \ref{figtcop1}--\ref{figtcop3} display Q-Q plots for the estimators of $P\{\Theta_5>1\}$ in the tCopula model with $\rho=0.25$, $\rho=0.5$ and $\rho=0.75$, respectively, that is, in models with increasing serial dependence.  The difference between the distributions of both versions is even smaller than in the GARCH models. In fact, in all settings, the distributions of both versions of the backward estimator are almost identical, whereas, roughly speaking, the forward estimator based on exceedances over order statistics is a discretized version of the TQ-version. Note that the discrete nature of the distribution of the OS-version of the forward estimator becomes the more pronounced the weaker the dependence is (and hence the smaller the probabilities to be estimated) since then there are fewer values the estimator can attain near the true probability.
\begin{figure}[tb]
	\centering
	\includegraphics[width=1\textwidth]{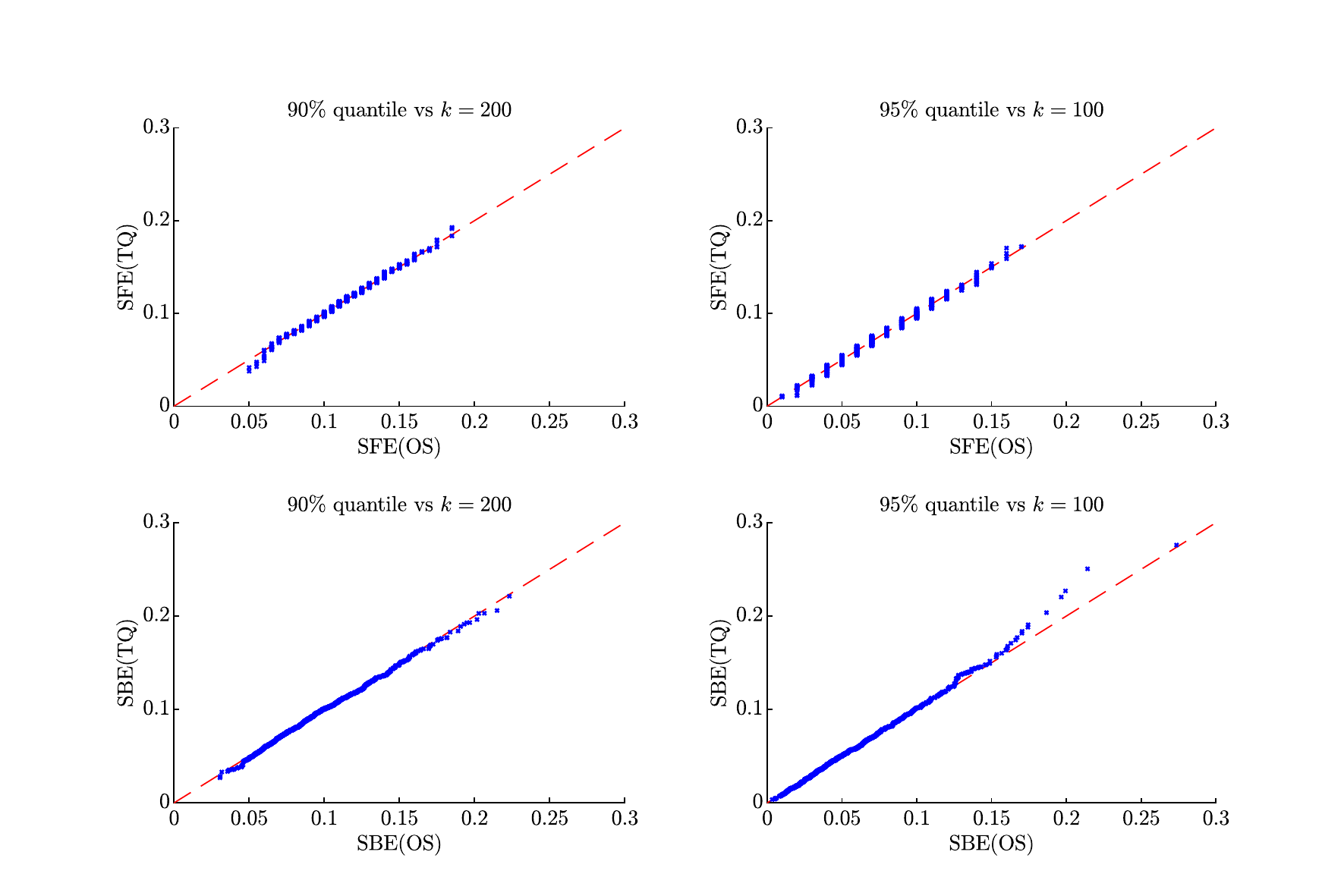}
	\caption{Comparison of estimators of  $P\{\Theta_5>1/2\}$  in the tCopula model  with $\rho=0.25$}
	\label{figtcop0}
\end{figure}
\begin{figure}[tb]
	\centering
	\includegraphics[width=1\textwidth]{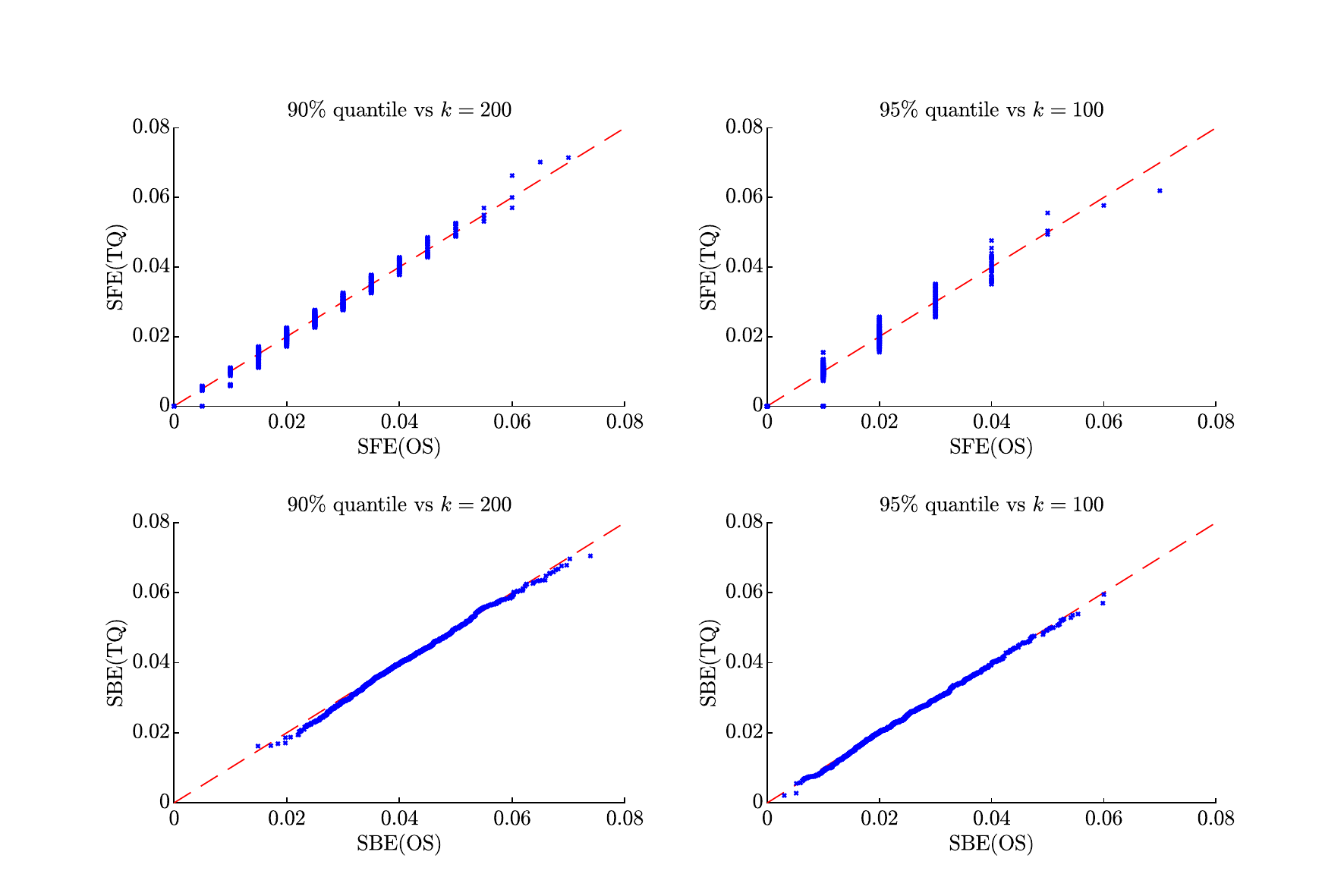}
	\caption{Comparison of estimators of  $P\{\Theta_5>1\}$  in the tCopula model  with $\rho=0.25$}
	\label{figtcop1}
\end{figure}
\begin{figure}[tb]
	\centering
	\includegraphics[width=1\textwidth]{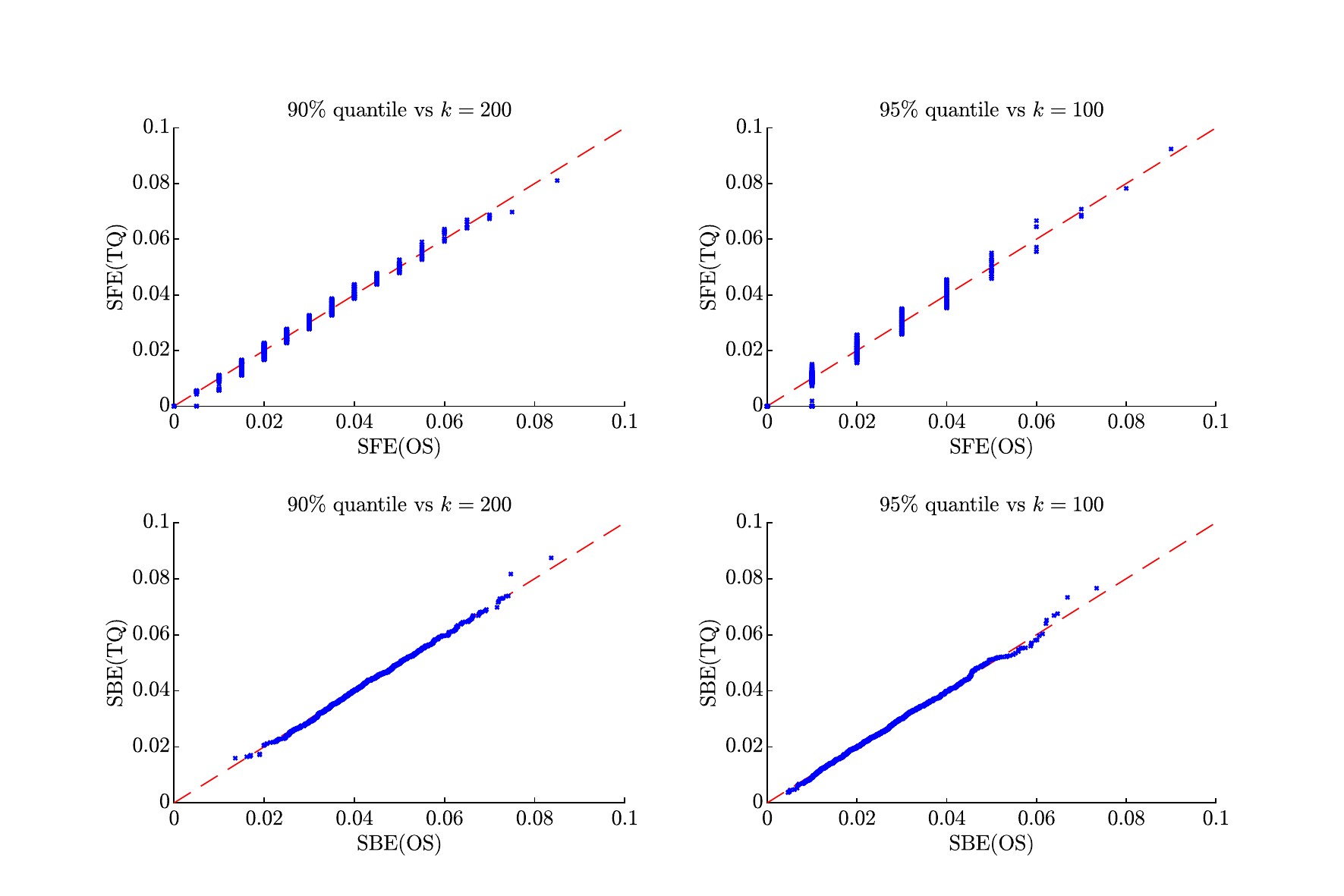}
	\caption{Comparison of estimators of  $P\{\Theta_5>1\}$  in the tCopula model  with $\rho=0.5$}
	\label{figtcop2}
\end{figure}
\begin{figure}[tb]
	\centering
	\includegraphics[width=1\textwidth]{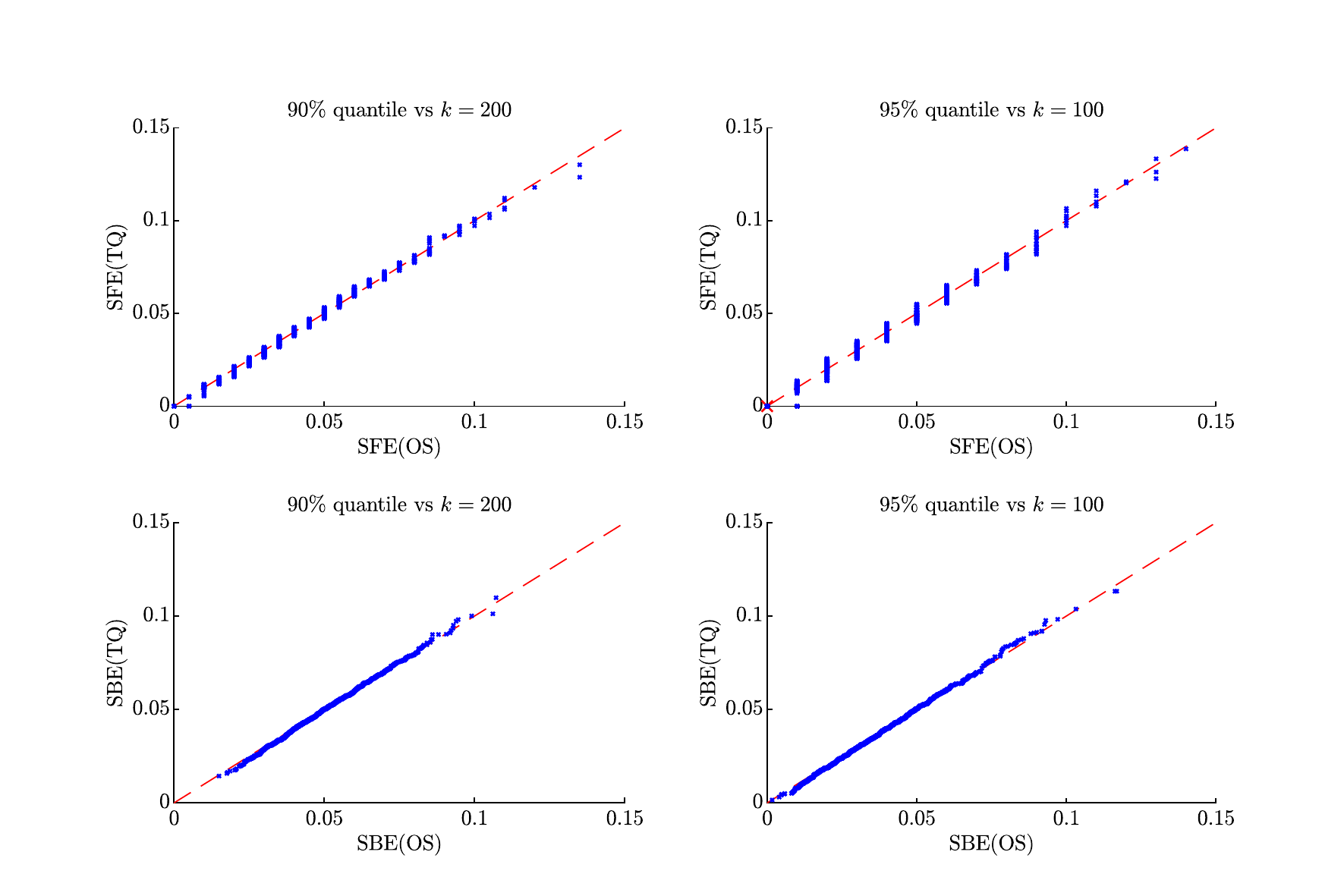}
	\caption{Comparison of estimators of  $P\{\Theta_5>1\}$  in the tCopula model  with $\rho=0.75$}
	\label{figtcop3}
\end{figure}

Finally, we consider the gumCopula model.
Figures \ref{figgum1}--\ref{figgum3} show the Q-Q plots for the estimators of $P\{\Theta_1>1\}$ for  model parameter $\theta=1.2$, $1.5$ and $2$, respectively; so again the serial dependence is increasing. Moreover, Figures \ref{figgum4} and \ref{figgum5} show the results for $\theta=2$ and the probabilities $P\{\Theta_1>1/2\}$ resp.\ $P\{\Theta_5>1/2\}$. Overall, the results are similar as for the tCopula model. However, in some settings (in particular for lag 5 and a high threshold) the points lie below the diagonal in the right tail, i.e.\ here the TQ-version performs sometimes slightly better than the OS-version.

\begin{figure}[tb]
	\centering
	\includegraphics[width=1\textwidth]{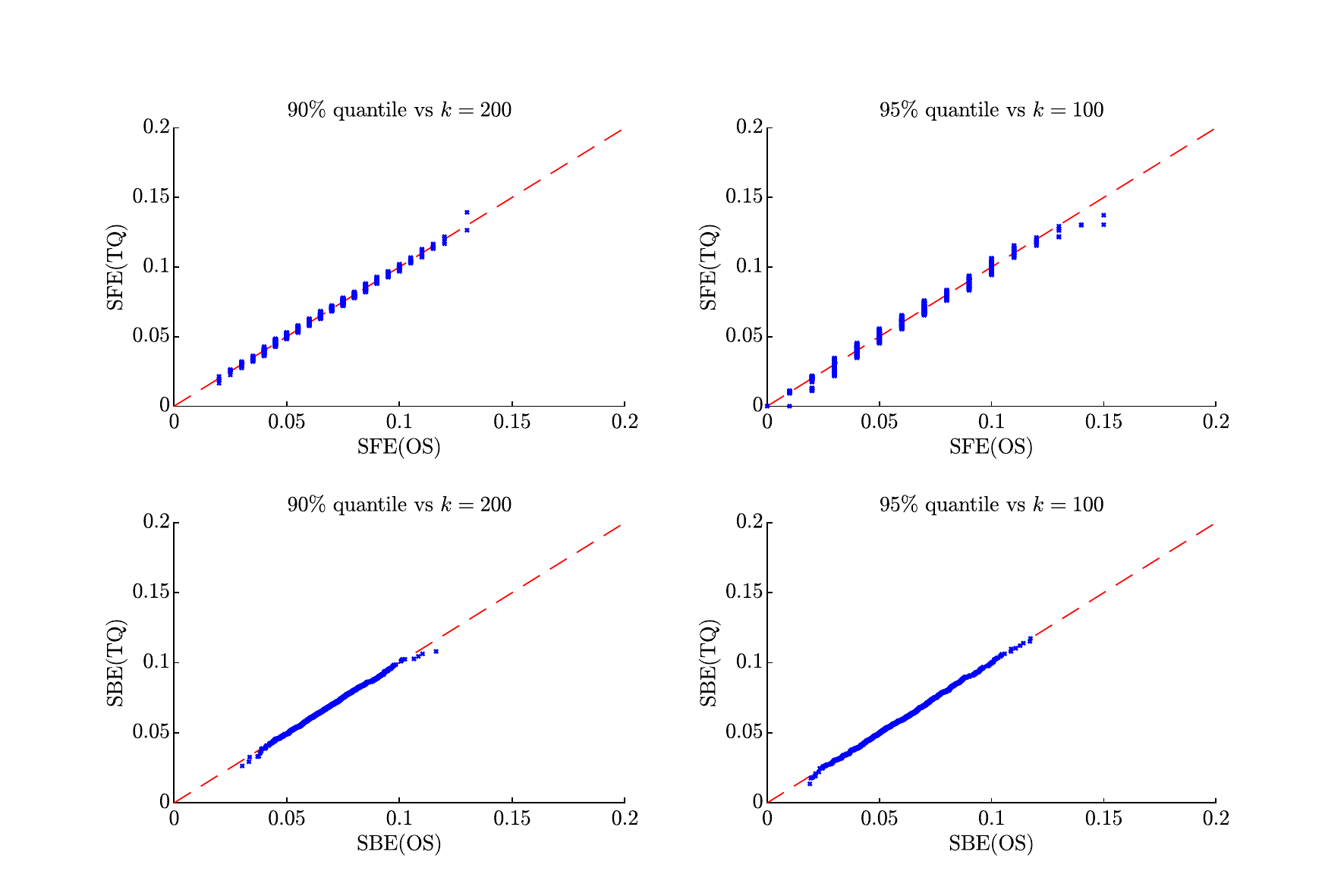}
	\caption{Comparison of estimators of  $P\{\Theta_1>1\}$  in the gumCopula model  with $\theta=1.2$}
	\label{figgum1}
\end{figure}

\begin{figure}[tb]
	\centering
	\includegraphics[width=1\textwidth]{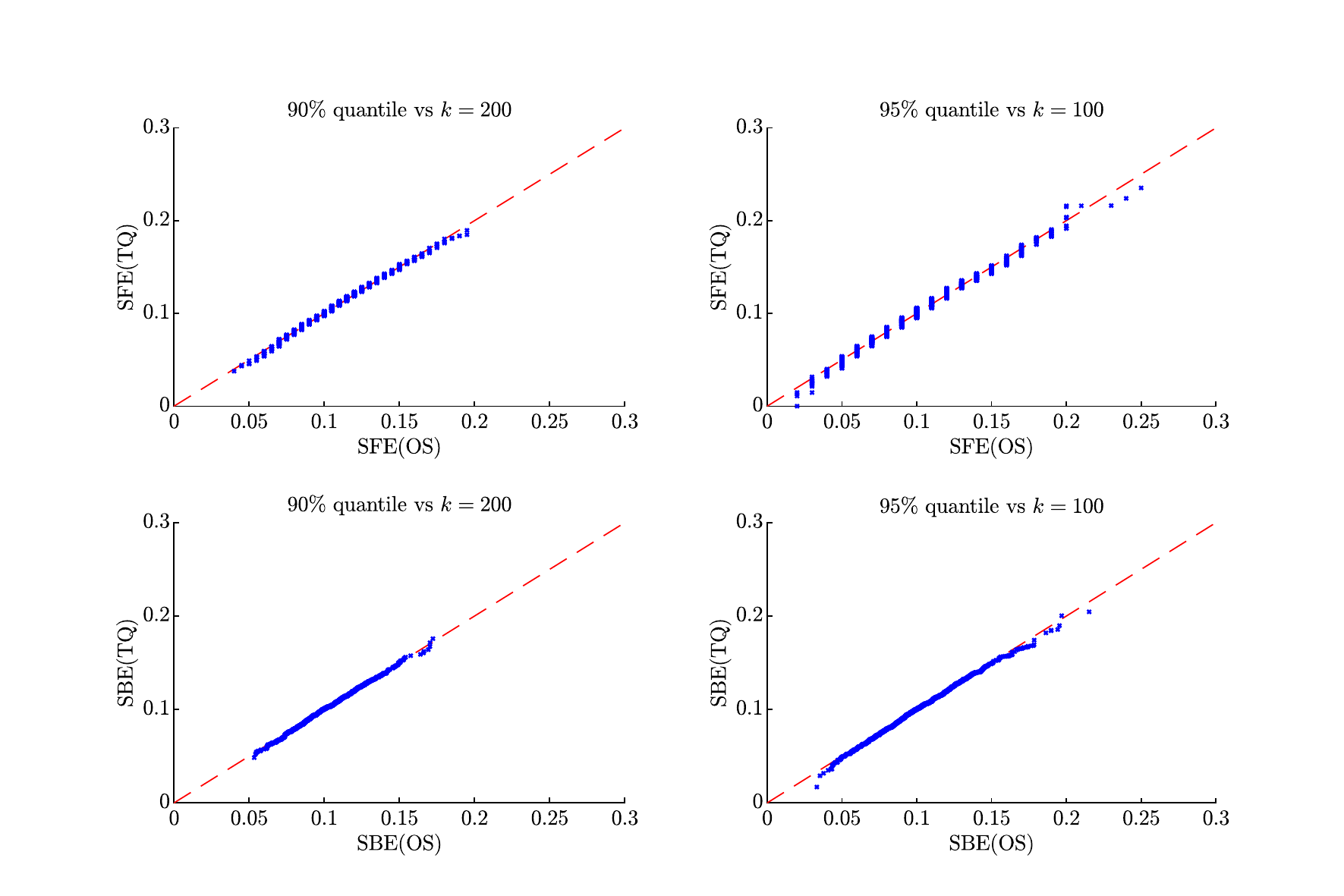}
	\caption{Comparison of estimators of  $P\{\Theta_1>1\}$  in the gumCopula model  with $\theta=1.5$}
	\label{figgum2}
\end{figure}

\begin{figure}[tb]
	\centering
	\includegraphics[width=1\textwidth]{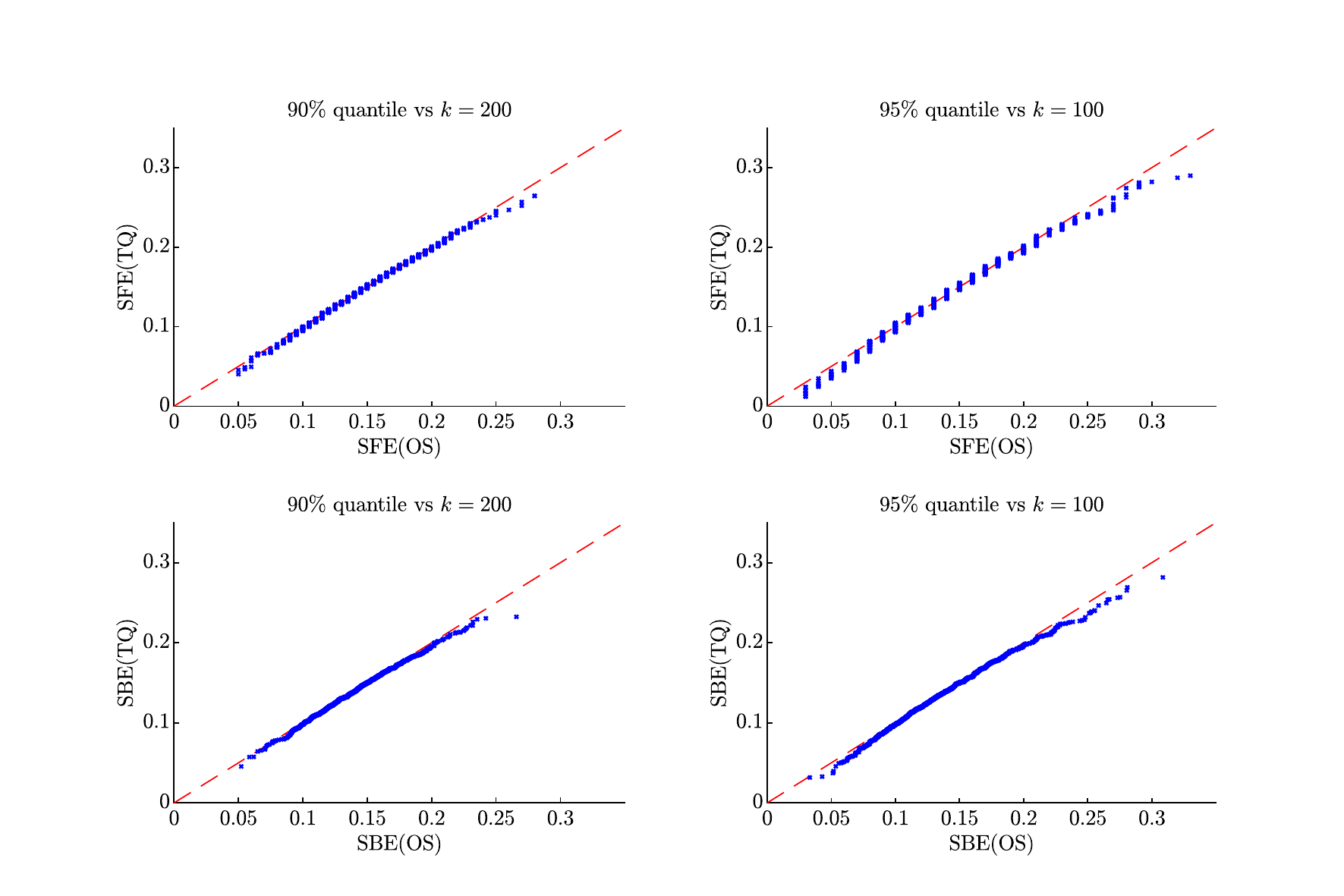}
	\caption{Comparison of estimators of  $P\{\Theta_1>1\}$  in the gumCopula model  with $\theta=2$}
	\label{figgum3}
\end{figure}

\begin{figure}[tb]
	\centering
	\includegraphics[width=1\textwidth]{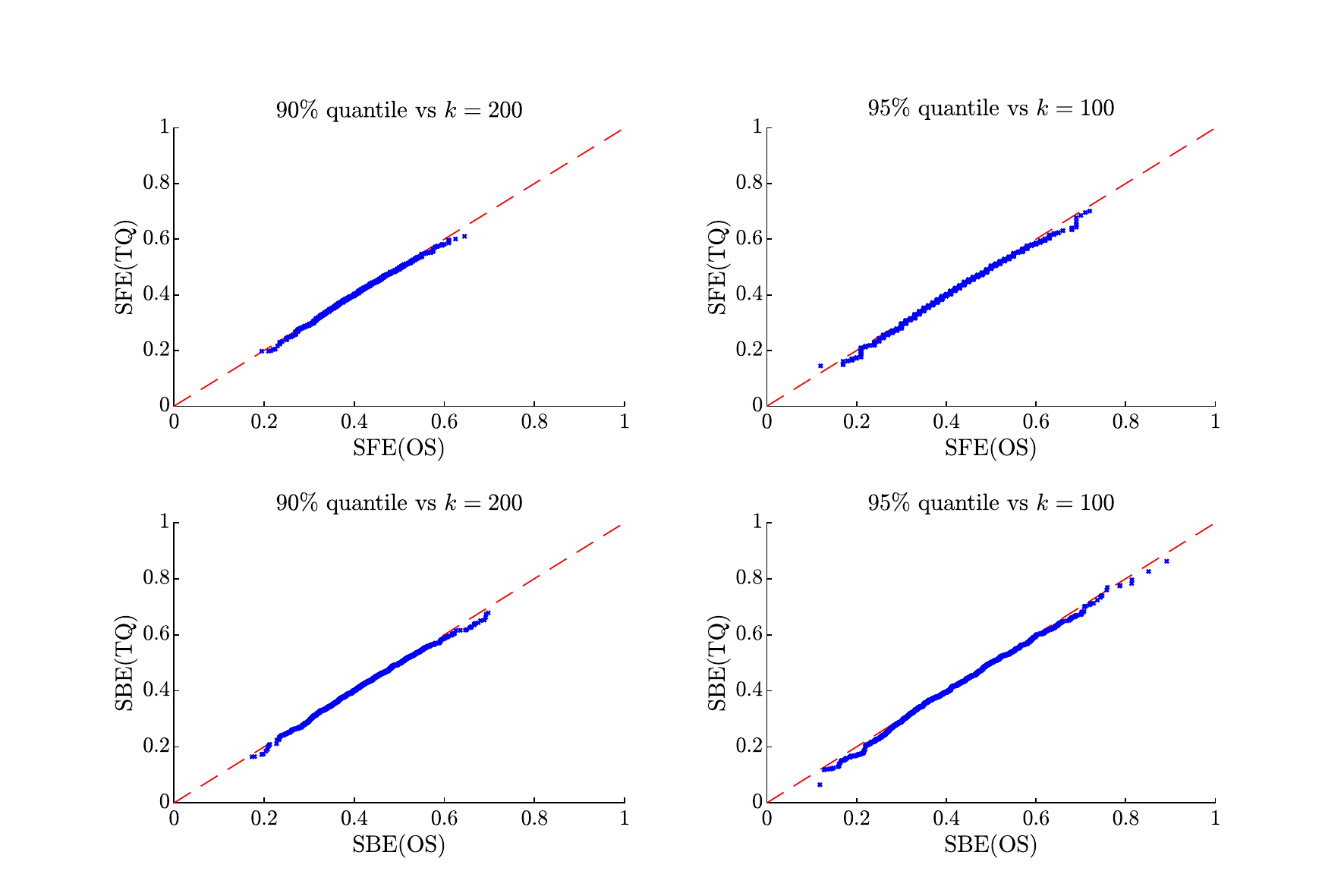}
	\caption{Comparison of estimators of  $P\{\Theta_1>1/2\}$  in the gumCopula model  with $\theta=2$}
	\label{figgum4}
\end{figure}
\begin{figure}[tb]
	\centering
	\includegraphics[width=1\textwidth]{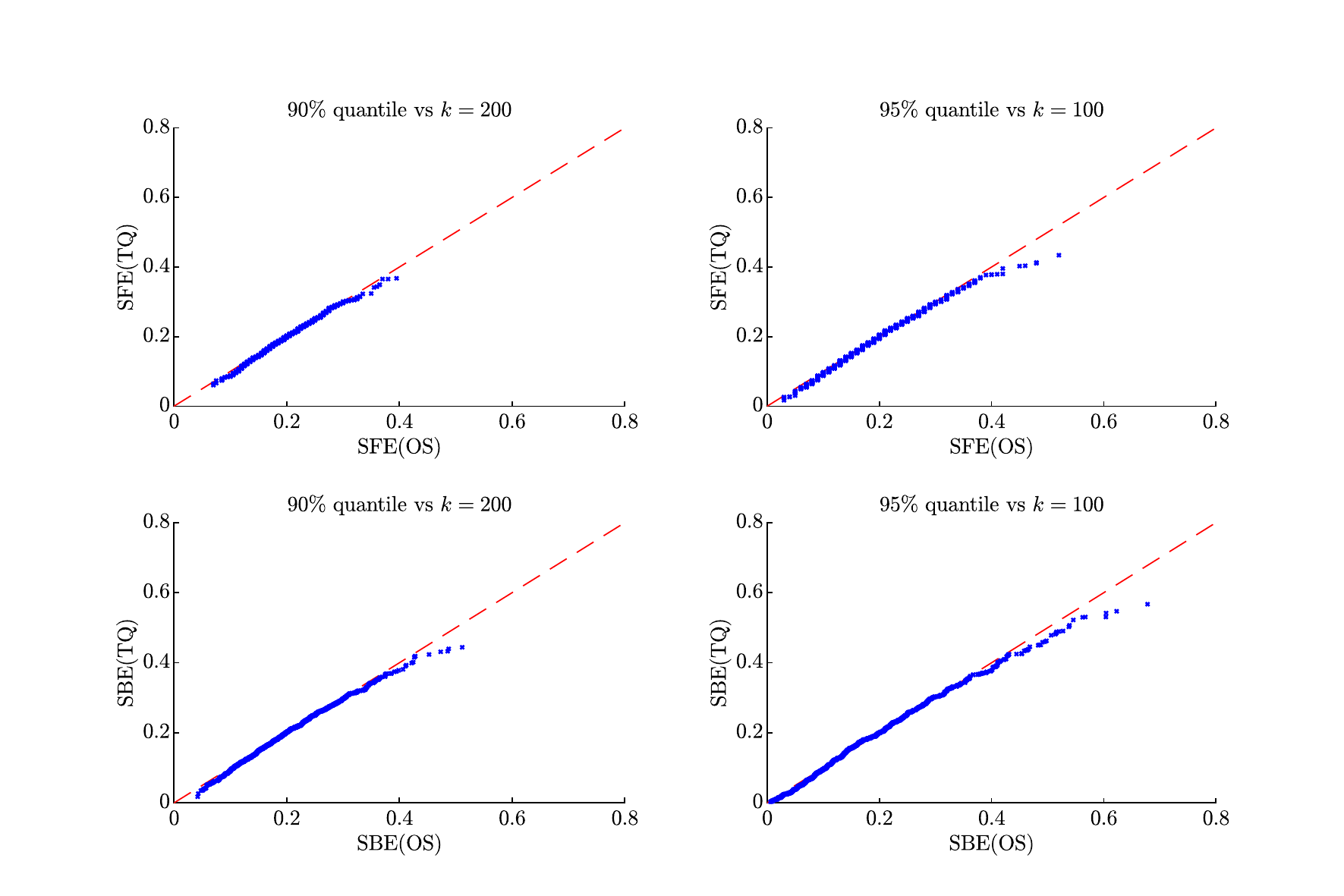}
	\caption{Comparison of estimators of  $P\{\Theta_5>1/2\}$  in the gumCopula model  with $\theta=2$}
	\label{figgum5}
\end{figure}

To sum up, for all constellations under consideration the distribution of both versions of the forward and the backward estimators, based either on exceedances over true quantiles or over order statistics, are similar. Often, the version using true quantiles tends to underestimate the true value more strongly than the version based on order statistics when both estimators yield too small values, whereas the findings are mixed if both versions overestimate the true values.



%
%